\newcommand{\R}{{\mathbb{R}}} 
\newcommand{\C}{{\mathbb{C}}}
\newcommand{\F}{{\mathbb{F}}} 
\newcommand{\OO}{{\mathbb{O}}} 
\newcommand{\HH}{{\mathbb{H}}}
\newcommand{\CC}{{\mathcal{C}}} 
\newcommand{\MI}{{\mathcal{MI}}} 
\newcommand{\MA}{{\mathcal{A}}} 
\newcommand{\MB}{{\mathcal{B}}} 
\newcommand{\MM}{{\mathcal{M}}} 
\newcommand{\Gl}{\operatorname{Gl}}
\newcommand{\RR}{{\mathcal{R}}} 
\newcommand{\Scap}{\operatorname{Cap}}
\newcommand{\Hom}{\operatorname{Hom}}
\newcommand{\diag}{\operatorname{diag}}
\newcommand{\Aut}{\operatorname{Aut}}
\newcommand{\End}{\operatorname{End}}
\newcommand{\trace}{\operatorname{trace}}
\newcommand{\supp}{\operatorname{supp}}
\newcommand{\Harm}{\operatorname{Harm}}
\newcommand{\1}{\operatorname{\bf 1}}
\newcommand{\Stab}{\operatorname{Stab}}
\newcommand{\Id}{\operatorname{Id}}
\newcommand{\Pold}{\operatorname{Pol}_{\leq d}}
\newcommand{\prodtrace}[2]{\langle #1, #2 \rangle}
\newcommand{\kr}{\text{cr}}
\newcommand{\kfrac}[2]{\mbox{$\frac{#1}{#2}$}}
\newcommand{\oN}{{\mathbb{N}}}
\newcommand{\oR}{{\mathbb{R}}}
\newtheorem{defi}{Definition}[section]
\newtheorem{mydefinition}[defi]{Definition}
\newtheorem{myproposition}[defi]{Proposition}
\newtheorem{mytheorem}[defi]{Theorem}
\newtheorem{myremark}[defi]{Remark}
\newtheorem{mycorollary}[defi]{Corollary}
\newtheorem{mylemma}[defi]{Lemma}
\newtheorem{myexample}[defi]{Example}
\newcommand*{\transp}{\mathsf{T}}
\newcommand*{\q}{\mathbf{q}}
\newcommand{\Pow}{{\mathcal{P}}}
\begin{document}

\title*{Invariant semidefinite programs}

\author{Christine Bachoc\inst{1}\and
Dion C.~Gijswijt\inst{2}\and
Alexander Schrijver\inst{3}\and
Frank Vallentin\inst{4}}

\authorrunning{C.~Bachoc, D.C.~Gijswijt, A.~Schrijver, F.~Vallentin} 

\institute{Laboratoire A2X, Universit\'e Bordeaux I, 351,
cours de la Li\-b\'e\-ration, 33405 Talence, France
\texttt{bachoc@math.u-bordeaux1.fr}
\and
CWI and Department of Mathematics, Leiden University;
Centrum voor Wiskunde en Informatica (CWI),
Sciencepark 123, 1098 XG Amsterdam, The Netherlands
\texttt{dion.gijswijt@gmail.com}
\and
CWI and Department of Mathematics, University of Amsterdam;
Centrum voor Wiskunde en Informatica (CWI),
Sciencepark 123, 1098 XG Amsterdam, The Netherlands
\texttt{lex@cwi.nl}
\and 
Delft Institute of Applied Mathematics, 
Technical University of Delft, P.O. Box 5031, 2600 GA Delft, The Netherlands
\texttt{f.vallentin@tudelft.nl}}

\maketitle

\section{Introduction}

In the last years many results in the area of semidefinite programming were obtained for invariant semidefinite programs --- semidefinite programs which have symmetries. This was done for a variety of problems and applications. The purpose of this handbook chapter is to give the reader the necessary background for dealing with semidefinite programs which have symmetry. Here the focus is on the basic theory and on representative examples. We do not aim at completeness of the presentation.

In all applications the underlying principles are similar: one simplifies the original semidefinite program which is invariant under a group action by applying an algebra isomorphism mapping a ``large'' matrix algebra to a ``small'' matrix algebra. Then it is sufficient to solve the semidefinite program using the smaller matrices.

We start this chapter by developing the general framework in the introduction where we give a step-by-step procedure for simplifying semidefinite programs: Especially  Step 2 (first version), Step 2 (second version), and Step $1 \frac{1}{2}$ will be relevant in the later discussion. Both versions of Step 2 are based on the main structure theorem for matrix $*$-algebras. Step $1 \frac{1}{2}$ is based on the regular $*$-representation.

In Section~\ref{sec:matrix star} we give a proof of the main structure theorem for matrix $*$-algebras and we present the regular $*$-representation. Strictly speaking the framework of matrix $*$-algebras is slightly too general for the applications we have in mind. However, working with matrix $*$-algebras does not cause much extra work and it also gives a numerical algorithm for finding an explicit algebra isomorphism. Section~\ref{sec:matrix star} is mainly concerned with finite dimensional invariant semidefinite programs. In Section~\ref{Representation theory} we show how one can extend this to special classes of infinite dimensional invariant semidefinite programs, namely those which arise from permutation actions of compact groups. We focus on this case because of space limitations and because it suffices for our examples. This section is connected to Step 2 (second version) of the introduction.

The later sections contain examples coming from different areas: In Section~\ref{Block codes} we consider finding upper bounds for finite error-correcting codes and in Section~\ref{Crossing numbers} we give lower bounds for the crossing number of complete bipartite graphs. Both applications are based on the methods explained in Section~\ref{sec:matrix star} (Step 2 (first version) and Step $1 \frac{1}{2}$ in the introduction). In Section~\ref{Spherical codes} we use Step 2 (second version) for finding upper bounds for spherical codes and other geometric packing problems on the sphere. For this application the background in Section~\ref{Representation theory} is relevant. Section~\ref{Block codes} and Section~\ref{Spherical codes} both use the theta number of Lov\'asz for finding upper bounds for the independence number of highly symmetric graphs. In Section~\ref{Sums of squares} we show how one can exploit symmetry in polynomial optimization: We give particular sum of squares representations of polynomials which have symmetry. 

This list of applications is not complete, and many more applications can be found in the literature. In the last Section~\ref{More applications} we give literature pointers to more applications.

\subsection{Complex semidefinite programs}

In order to present the theory as simple as possible we work with complex semidefinite programs. We give the necessary definitions. A complex matrix $X \in \C^{n \times n}$ is \emph{Hermitian} if $X = X^*$, where $X^*$ is the \emph{conjugate transpose} of $X$, i.e.\ $X_{ij} = \overline{X_{ji}}$. It is called \emph{positive semidefinite}, we write $X \succeq 0$, if for all (column) vectors $(\alpha_1, \ldots, \alpha_n) \in \C^n$ we have
\begin{equation*}
\sum_{i=1}^n \sum_{j = 1}^n \alpha_i X_{ij} \overline{\alpha_j} \geq 0.
\end{equation*}
The space of complex matrices is equipped with a complex inner product, the trace product $\prodtrace{X}{Y} = \trace(Y^*X)$, which is linear in the first entry. The inner product of two Hermitian matrices is always real.

\begin{mydefinition}
A \emph{(complex) semidefinite program} is an optimization problem
of the form
\begin{equation}
\label{sdp standard}
\max\{\prodtrace{X}{C} : X \succeq 0,\prodtrace{X}{A_1} = b_1, \ldots, \prodtrace{X}{A_m} = b_m\},
\end{equation}
where $A_1, \ldots, A_m \in \C^{n \times n}$, and $C \in \C^{n \times n}$ are given
Hermitian matrices, $(b_1, \ldots, b_m) \in \R^m$ is a given vector and $X \in
\C^{n \times n}$ is a variable Hermitian matrix. 
\end{mydefinition}

A Hermitian matrix $X \in \C^{n \times n}$ is called a \emph{feasible solution} of \eqref{sdp standard} if it is positive semidefinite and fulfills all $m$~linear constraints. It is called an \emph{optimal solution} if it is feasible and if for every feasible solutions $Y$ we have $\prodtrace{X}{C} \geq \prodtrace{Y}{C}$. 

There is an easy reduction from complex semidefinite programs to semidefinite programs involving real matrices only, as noticed by Goemans and Williamson~\cite{GoemansWilliamson}. A complex matrix $X \in \C^{n \times n}$ defines a real matrix 
\begin{equation*}
\begin{pmatrix}
\Re(X) & -\Im(X)\\
\Im(X) & \Re(X)
\end{pmatrix}
\in \R^{2n \times 2n},
\end{equation*}
where $\Re(X) \in \R^{n \times n}$ and $\Im(X) \in \R^{n \times n}$ are the real and imaginary parts of~$X$. Then the properties of being Hermitian and being complex positive semidefinite translate into being symmetric and being real positive semidefinite: We have for all real vectors $\alpha = (\alpha_1, \ldots, \alpha_{2n}) \in \R^{2n}$:
\begin{equation*}
\alpha^{\sf T}
\begin{pmatrix}
\Re(X) & -\Im(X)\\
\Im(X) & \Re(X)
\end{pmatrix}
\alpha \geq 0.
\end{equation*}

On the other hand, complex semidefinite programs fit into the framework of conic programming (see e.g.\ Nemirovski \cite{Nemirovski}). Here one uses the cone of positive semidefinite Hermitian matrices instead of the cone of real positive semidefinite matrices. There are implementations available, SeDuMi (Sturm \cite{Sturm}) for instance, which can deal with complex semidefinite programs directly.

\subsection{Semidefinite programs invariant under a group action}
\label{ssec:sdp invariant}

Now we present the basic framework for simplifying a complex semidefinite program which has symmetry, i.e.\ which is invariant under the action of a group.

Let us fix some notation first. Let $G$ be a finite group. Let $\pi : G \to U_n(\C)$ be a \emph{unitary representation} of $G$, that is, a group homomorphism from the group $G$ to the group of unitary matrices $U_n(\C)$. The group $G$ is acting on the set of Hermitian matrices by
\begin{equation*}
(g,A) \mapsto \pi(g)A\pi(g)^*.
\end{equation*}
In general, a (left) \emph{action} of a group $G$ on a set $M$ is a map
\begin{equation*}
G \times M \to M, \quad (g, x) \mapsto gx,
\end{equation*}
that satisfies the following properties: We have $1x = x$ for all $x \in M$ where $1$ denotes the neutral element of $G$. Furthermore, $(g_1 g_2)(x) = g_1(g_2 x)$ for all $g_1, g_2 \in G$ and all $x \in M$. 

A matrix $X$ is called \emph{$G$-invariant} if $X = gX$ for all $g \in G$, and we denote the set of all $G$-invariant matrices by $(\C^{n \times n})^G$. We say that the semidefinite program~\eqref{sdp standard} is \emph{$G$-invariant} if for every feasible solution $X$ and for every $g\in G$ the matrix $gX$ is again a feasible solution and if it satisfies $\prodtrace{gX}{C} = \prodtrace{X}{C}$ for all $g \in G$.

One  example, which will receive special attention because of its importance, is the case of a permutation action:
The set of feasible solutions is invariant under simultaneous
permutations of rows and columns. Let $G$ be a finite group which acts
on the index set $[n] = \{1, \ldots, n\}$. So we can see $G$ as a
subgroup of the permutation group on $[n]$. 
To a permutation $\sigma$ on $[n]$ corresponds a matrix permutation
$P_{\sigma}\in U_n(\C)$ defined by 
\begin{equation*}
[P_{\sigma}]_{ij} = 
\left\{
\begin{array}{rl}
1,\;\; & \text{if $i = \sigma(j)$,}\\
0,\;\; & \text{otherwise}.
\end{array}
\right.
\end{equation*}
and the underlying unitary representation is $\pi(\sigma) =
P_{\sigma}$. In this case, the action on matrices $X\in \C^{n\times n}$ is 
\begin{equation*}
\sigma(X)=P_{\sigma}XP_{\sigma}^*,  \text{ where
}\sigma(X)_{ij}=X_{\sigma^{-1}(ij)}=X_{\sigma^{-1}(i),\sigma^{-1}(j)}.
\end{equation*}

\begin{verse}
{\small
\noindent
In the following, we give some background on unitary representations. This part may be skipped at first reading.
Let $G$ be a finite group. A \emph{representation of $G$} is a finite
dimensional complex vector space $V$ together with a homomorphism $\pi: G\to
\Gl(V)$
from $G$ to the group of invertible linear maps on $V$. The space $V$
is also called a \emph{$G$-space} or a \emph{$G$-module} and $\pi$ may
be dropped in notations, replacing 
$\pi(g)v$ by $gv$. In other words, the group $G$ acts on $V$ and this
action has the additional property that $g(\lambda v+\mu w)=\lambda
gv+\mu gw$ for all $(\lambda,\mu)\in \C^2$ and $(v,w)\in V^2$. 
The \emph{dimension} of a representation is the dimension of the
underlying vector space  $V$.

\noindent
If $V$ is endowed with an inner product $\langle v,w\rangle$ which is
invariant under $G$, i.e. satisfies  $\langle gv,gw\rangle=\langle
v,w\rangle$ for all $(v,w)\in V^2$ and $g\in G$, $V$ is called  a
\emph{unitary representation} of $G$. An inner product with this
property always exists on $V$, since it is obtained by taking the
average 
\begin{equation*}
\langle v,w\rangle=\frac{1}{|G|}\sum_{g\in G} \langle gv,gw\rangle_0
\end{equation*}
of an arbitrary inner product $\langle v,w\rangle_0$ on $V$.
So, with an appropriate choice of a basis of $V$, any representation of $V$ is
isomorphic to one of the form $\pi: G \to U_n(\C)$, the form
in which unitary representations of $G$ where defined above. 
}
\end{verse}

\medskip
\noindent\textbf{Step $\mathbf{1}$: Restriction to invariant subspace}
\medskip

Because of the convexity of \eqref{sdp standard}, one can find
an optimal solution of \eqref{sdp standard} in the set of $G$-invariant matrices. In fact, if $X$ is an optimal
solution of \eqref{sdp standard}, so is its \textit{group average}
$\frac{1}{|G|} \sum\limits_{g\in G} gX$. Hence, \eqref{sdp standard} is
equivalent to
\begin{equation}
\label{sdp standard invariant}
\max\{\prodtrace{X}{C} : X \succeq 0, \prodtrace{X}{A_1} = b_1, \ldots, \prodtrace{X}{A_m} = b_m, X \in (\C^{n \times n})^G\}.
\end{equation}

The $G$-invariant matrices intersected with the Hermitian matrices form a vector space. Let $B_1, \ldots, B_N$ be a basis of this space.  Step 1 of simplifying a $G$-invariant semidefinite program is rewriting~\eqref{sdp standard invariant} in terms of this basis.

\medskip

\textit{\textbf{Step 1:} If the semidefinite program~\eqref{sdp standard} is
$G$-invariant, then it is equivalent to}
\begin{equation}
\label{sdp first reduction}
\begin{array}{lcl}
\max\Big\{\prodtrace{X}{C} & : & x_1, \ldots, x_N \in \C,\\
& & X = x_1 B_1 + \cdots + x_N B_N \succeq 0,\\
& & \prodtrace{X}{A_i} = b_i, \; i = 1, \ldots, m\Big\}.
\end{array}
\end{equation}

\medskip

In the case of a permutation action there is a canonical basis of $(\C^{n \times n})^G$ which one can determine by looking at the orbits of the group action on pairs. Then, performing Step 1 using this basis amounts to coupling the variable matrix entries of~$X$. 

The \emph{orbit} of the pair $(i,j) \in [n] \times [n]$ under the group $G$ is given by
\begin{equation*}
O(i,j) = \{(\sigma(i),\sigma(j)): \sigma \in G\}.
\end{equation*}
The set $[n] \times [n]$ decomposes into the orbits $R_1, \ldots,
R_M$ under the action of $G$.  For every $r \in \{1, \ldots, M\}$ we
define the matrix $C_r \in \{0,1\}^{n \times n}$ by $(C_r)_{ij} = 1$ if
$(i,j) \in R_r$ and $(C_r)_{ij} = 0$ otherwise. Then $C_1, \ldots, C_M$
forms a basis of $(\C^{n \times n})^G$, the \emph{canonical basis}. If $(i,j) \in R_r$ we also write
$C_{[i,j]}$ instead of $C_r$. Then, $C_{[j,i]}^{\sf T} = C_{[i,j]}$.

Note here that $C_1, \ldots, C_M$ is a basis of $(\C^{n \times n})^G$. In order to get a basis of the space of $G$-invariant Hermitian matrices we have to consider the orbits of unordered pairs: We get the basis by setting $B_{\{i,j\}} = C_{[i,j]}$ if $(i,j)$ and $(j,i)$ are in the same orbit and $B_{\{i,j\}} = C_{[i,j]} + C_{[j,i]}$ if they are in different orbits.
The matrix entries of $X$ in \eqref{sdp first reduction} are constant on the (unordered) orbits of pairs: $X_{ij} = X_{\sigma(ij)} = X_{\sigma(ji)} = X_{ji}$ for all $(i,j) \in [n] \times [n]$ and $\sigma \in G$.

\medskip
\noindent\textbf{Step $\mathbf{2}$: Reducing the matrix sizes by block diagonalization}
\medskip

The $G$-invariant subspace $(\C^{n \times n})^G$ is closed under matrix multiplication. This can be seen as follows: Let $X, Y \in (\C^{n \times n})^G$ and let $g \in G$, then
\begin{equation*}
g(XY) = \pi(g)XY\pi(g)^* = (\pi(g)X\pi(g)^*)(\pi(g)Y\pi(g)^*) = (gX)(gY) = XY.
\end{equation*}
Moreover, it is also closed under taking the conjugate transpose,
since, for $X\in (\C^{n \times n})^G$,
$\pi(g)X^*\pi(g)^*=(\pi(g)X\pi(g)^*)^*=X^*$. Thus $(\C^{n \times n})^G$
has the structure of a \emph{matrix $*$-algebra}. (However, not all matrix $*$-algebras are coming from group actions.)

In general, a \emph{matrix $*$-algebra} is a set of complex matrices that is closed under addition, scalar multiplication, matrix multiplication, and taking the conjugate transpose. The main structure theorem of matrix $*$-algebras is the following:

\begin{mytheorem}
\label{thm:block diagonal star}
Let $\mathcal{A} \subseteq \C^{n \times n}$ be a matrix $*$-algebra. There are numbers $d$, and $m_1, \ldots, m_d$ so that there is $*$-isomorphism between $\mathcal{A}$ and a direct sum of complete matrix algebras
\begin{equation*}
\varphi : \mathcal{A} \to \bigoplus_{k=1}^d \C^{m_k \times m_k}.
\end{equation*}
\end{mytheorem}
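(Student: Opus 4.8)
**Proof plan for Theorem (Wedderburn structure theorem for matrix $*$-algebras)**

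The plan is to prove this by induction on $n$, decomposing the ambient space $\C^n$ into invariant subspaces until $\mathcal{A}$ acts irreducibly, and then invoking a Schur-type argument to identify each irreducible block with a full matrix algebra. First I would observe that a matrix $*$-algebra $\mathcal{A}$ acts on $V = \C^n$, and because $\mathcal{A}$ is closed under conjugate transpose, the orthogonal complement (with respect to the standard Hermitian inner product) of any $\mathcal{A}$-invariant subspace is again $\mathcal{A}$-invariant: if $\mathcal{A}W \subseteq W$, then for $v \in W^\perp$, $w \in W$, $A \in \mathcal{A}$ we have $\langle Av, w\rangle = \langle v, A^*w\rangle = 0$ since $A^* \in \mathcal{A}$. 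Hence $V$ decomposes as an orthogonal direct sum of $\mathcal{A}$-irreducible subspaces, so after a unitary change of basis $\mathcal{A}$ is block-diagonal; it then suffices to treat the case where $\mathcal{A}$ acts irreducibly on $V$.

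Next, I would handle the irreducible case. There are two things that must hold before one can conclude $\mathcal{A} = \C^{n\times n}$ with $d = 1$, $m_1 = n$: first, the commutant $\mathcal{A}' = \{B : BA = AB \text{ for all } A \in \mathcal{A}\}$ should be just the scalars $\C\Id$ (a Schur's lemma argument: $\mathcal{A}'$ is itself a $*$-algebra, any self-adjoint element of it has an eigenspace which is $\mathcal{A}$-invariant, hence all of $V$ by irreducibility, so self-adjoint elements are scalar, and then all elements are scalar); and second, a double-commutant or dimension-counting step to deduce that $\mathcal{A}$ is all of $\End(V)$. For the finite-dimensional matrix $*$-algebra setting the cleanest route is the Jacobson density / von Neumann bicommutant theorem: since $\mathcal{A}$ is semisimple (a $*$-closed matrix algebra over $\C$ has no nonzero nilpotent ideals, because $N N^* = 0$ forces $N=0$), $\mathcal{A} = \mathcal{A}'' = (\C\Id)' = \C^{n\times n}$.

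Then I would assemble the general case: decompose $V = \bigoplus_i V_i$ into $\mathcal{A}$-irreducibles, group the $V_i$ into isomorphism classes, and note that $\mathcal{A}$ acts on each isotypic component; restricting $\mathcal{A}$ to an isotypic component consisting of $m_k$ copies of an irreducible of dimension $e_k$, the image is (by the irreducible case plus the multiplicity bookkeeping) a copy of $\C^{m_k \times m_k}$ embedded block-diagonally $e_k$ times. The projections of $\mathcal{A}$ onto the distinct isotypic components are independent — this is where one needs that elements of $\mathcal{A}$ can be chosen supported on a single isotypic block, again via a commutant/central-idempotent argument — so $\mathcal{A} \cong \bigoplus_{k=1}^d \C^{m_k\times m_k}$, and the isomorphism is a $*$-isomorphism because all the identifications (unitary change of basis, restriction to invariant subspaces, Schur isomorphisms) respect the conjugate transpose.

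The main obstacle I expect is not the irreducible case but the gluing step: showing that the map $\mathcal{A} \to \bigoplus_k \C^{m_k\times m_k}$ is surjective, i.e.\ that one can realize elements of $\mathcal{A}$ acting arbitrarily on one isotypic component while vanishing on the others, and that within one isotypic component the $m_k$ copies are genuinely "tied together" by a single $\C^{m_k\times m_k}$ rather than acting independently. Both points hinge on understanding the commutant $\mathcal{A}'$ precisely (it is $\bigoplus_k \C^{m_k\times m_k}$ acting on the multiplicity spaces) and then applying the bicommutant theorem once more; the $*$-structure is what makes every orthogonal-complement and every commutant computation go through without friction, so I would foreground the identity $\langle Av,w\rangle = \langle v,A^*w\rangle$ throughout.
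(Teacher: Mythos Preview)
Your overall strategy is sound and is the classical representation-theoretic route, but it differs from the paper's proof and contains one genuine slip. The paper does not decompose $V$ into $\mathcal{A}$-irreducibles at all; instead it picks a \emph{maximal commutative} $*$-subalgebra $\mathcal{B}\subseteq\mathcal{A}$, simultaneously diagonalizes it to get a partition $[n]=S_0\cup S_1\cup\cdots\cup S_k$, and then shows directly that each off-diagonal block $A_{ij}$ of any $A\in\mathcal{A}$ is either zero or a scalar multiple of a unitary matrix. An equivalence relation on $\{1,\dots,k\}$ (where $i\sim j$ iff $I_i\mathcal{A}I_j\neq\{0\}$) then produces the central idempotents, and within each simple summand the unitary off-diagonal blocks are used to write down the $*$-isomorphism to a full matrix algebra by hand. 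This buys an elementary and fully constructive argument with no appeal to bicommutant or density theorems, which is why the paper emphasizes its algorithmic content. Your approach is more conceptual and equally valid, but leans on the finite-dimensional bicommutant theorem as a black box.

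The slip is in your third paragraph: if an isotypic component consists of $m_k$ copies of an $\mathcal{A}$-irreducible of dimension $e_k$, then the image of $\mathcal{A}$ on that component is $\C^{e_k\times e_k}$ embedded diagonally $m_k$ times, \emph{not} $\C^{m_k\times m_k}$ embedded $e_k$ times. The block size in the structure theorem is the dimension of the $\mathcal{A}$-irreducible, not its multiplicity. (Concretely: embed $\C^{2\times 2}$ into $\C^{6\times 6}$ as $A\mapsto\operatorname{diag}(A,A,A)$; the $\mathcal{A}$-irreducible has dimension $2$ and multiplicity $3$, and $\mathcal{A}\cong\C^{2\times 2}$, not $\C^{3\times 3}$.) Your final paragraph actually gets this right implicitly---you say $\mathcal{A}'=\bigoplus_k\C^{m_k\times m_k}$ on the multiplicity spaces, and taking the bicommutant of that correctly yields $\bigoplus_k\C^{e_k\times e_k}$---so the fix is just to swap $m_k$ and $e_k$ in the offending sentence and make sure your bookkeeping is consistent throughout.
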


We will give a proof of this theorem in Section~\ref{sub:Blockdiag}
which also will give an algorithm for determining $\varphi$. 
In the case $\mathcal{A}=(\C^{n \times n})^G$, 
the numbers $d$, and $m_1, \ldots, m_d$ have a representation
theoretic interpretation: The numbers are determined by the unitary
representation $\pi : G \to U_n(\C)$, where $d$ is the number of
pairwise non-isomorphic irreducible representations contained in~$\pi$ and where $m_k$ is the multiplicity of the $k$-th  isomorphism class of irreducible representations contained in~$\pi$.

\begin{verse}
\small

\noindent 
In the following we present background in representation theory which is needed for the understanding of the numbers $d$ and $m_1, \ldots, m_d$ in Theorem~\ref{thm:block diagonal star}. Again, this part may be skipped at first reading.

\smallskip

\noindent
A \emph{$G$-homomorphism} $T$ between two $G$-spaces $V$ and $W$ is
a linear map that commutes with the actions of $G$ on $V$
and $W$: for all $g \in G$ and all $v \in V$ we have $T(gv)=gT(v)$.  If $T$ is invertible, then 
it is a \emph{$G$-isomorphism} and $V$ and $W$ are \emph{$G$-isomorphic}.
The set of all $G$-homomorphisms is a linear space, denoted $\Hom^G(V,W)$. 
If $V = W$, then $T$ is said to be a \emph{$G$-endomorphism} and the set $\End^G(V)$
of $G$-endomorphisms of $V$ is moreover an algebra under composition. If $V = \C^n$, and if $G$ acts on $\C^n$ by unitary matrices, then we have $\End^G(V) = (\C^{n \times n})^G = \mathcal{A}$.

\smallskip

\noindent
A representation $\pi : G \to \Gl(V)$ of $G$ (and the corresponding $G$-space~$V$) is \emph{irreducible} if it contains no proper subspace $W$ such that $gW\subset W$ for all $g\in G$, i.e. $V$
contains no \emph{$G$-subspace}. If $V$ contains a proper $G$-subspace
$W$, then also the orthogonal complement $W^{\perp}$ relative to a
$G$-invariant inner product, is a $G$-subspace and $V$ is the
direct sum $W\oplus W^{\perp}$. Inductively, one obtains Maschke's
  theorem: 
\begin{center}
\emph{Every $G$-space is the direct sum of irreducible
$G$-subspaces.}
\end{center}
This decomposition is generally not unique: For example, if $G$ acts trivially
on a vector space $V$ (i.e.\ $gv = v$ for all $g \in G$, $v \in V$) of dimension at least $2$, the irreducible
subspaces are the $1$-dimensional subspaces and $V$ can be decomposed
in many ways. 

\smallskip

\noindent
From now on, we fix a set $\RR=\{R_k : k = 1, \ldots, d\}$ of representatives
of the isomorphism classes of irreducible $G$-subspaces which are direct summands of $V$. 
Starting from an arbitrary decomposition of $V$, we consider, for $k = 1, \ldots, d $, the sum
of the irreducible subspaces which are isomorphic to $R_k$. One can
prove that this $G$-subspace of $V$, is independent of the
decomposition. It is called the {\em isotypic
component} of $V$ associated to $R_k$ and is denoted $\MI_k$. The integer
$m_k$ such that $\MI_k\simeq R_k^{m_k}$ is called the {\em multiplicity} of
$R_k$ in $V$. In other words, we have 
\begin{equation*}
V=\bigoplus_{k = 1}^d \MI_k\quad \text{and }\ 
\MI_k=\bigoplus_{i=1}^{m_k} H_{k,i}
\end{equation*}
where, $H_{k,i}$ is isomorphic to $R_k$, and $m_k\geq 1$. The first
decomposition is orthogonal with respect to an invariant inner product
and is uniquely determined by $V$, while the decomposition of $\MI_k$
is not unique unless $m_k=1$. 

\smallskip

\noindent
\emph{Schur's lemma} is the next crucial ingredient for the description of
$\End^G(V)$: 
\begin{center}
\emph{If $V$ is $G$-irreducible, then $\End^G(V)=\{\lambda\Id : \lambda\in \C\}\simeq \C$.}
\end{center}

\smallskip

\noindent
In the general case, when $V$ is not necessarily $G$-irreducible, 
\begin{equation*}
\End^G(V) \simeq \bigoplus_{k=1}^d \C^{m_k\times m_k}.
\end{equation*}
This result will be derived in the next section, in an algorithmic
way, as a consequence of the more general theory of matrix $*$-algebras.
 \end{verse}

So we consider a $*$-isomorphism $\varphi$ given by Theorem
\ref{thm:block diagonal star} applied to $\mathcal{A}=(\C^{n \times n})^G$:
\begin{equation}\label{block diagonal group}
\varphi :  (\C^{n \times n})^G \to \bigoplus_{k=1}^d \C^{m_k \times m_k}.
\end{equation}

Notice that since $\varphi$ is a $*$-isomorphism between matrix algebras with unity, $\varphi$ preserves also eigenvalues and hence positive semidefiniteness. Indeed, let $X \in (\C^{n \times n})^G$ be $G$-invariant, then also $X - \lambda I$ is $G$-invariant, and $X-\lambda I$ has an inverse if and only if $\varphi(X) - \lambda I$ has a inverse. This means that a test whether a (large) $G$-invariant matrix is positive semidefinite can be reduced to a test whether $d$ (small) matrices are positive semidefinite. Hence, applying $\varphi$ to \eqref{sdp first reduction} gives the second and final step of simplifying \eqref{sdp standard}:

\medskip

\textit{
\textbf{Step 2 (first version):} If the semidefinite program~\eqref{sdp standard} is
$G$-invariant, then it is equivalent to}
\begin{equation}
\label{sdp block first version}
\begin{array}{lcl}
\max\Big\{\prodtrace{X}{C} & : & x_1, \ldots, x_N \in \C,\\
& & X = x_1 B_1 + \cdots + x_N B_N \succeq 0,\\
& & \prodtrace{X}{A_i} = b_i, \; i = 1, \ldots, m,\\
& & x_1 \varphi(B_1) + \cdots + x_N \varphi(B_N) \succeq 0\Big\}.
\end{array}
\end{equation}

Applying $\varphi$ to a $G$-invariant semidefinite program is also called
\emph{block diagonalization}. The advantage of \eqref{sdp block first version} is that instead of dealing with matrices of size $n \times n$ one only has to deal with block diagonal matrices with $d$ block matrices of size $m_1, \ldots, m_d$, respectively. So one reduces the dimension from $n^2$ to $m_1^2 + \cdots + m_d^2$. In the case of a permutation action this sum is also the number of distinct orbits~$M$. In many applications the latter is much smaller than the former. In particular many practical solvers take advantage of the block structure to speed up the numerical calculations.

Instead of working with the $*$-isomorphism $\varphi$ and a basis $B_1, \ldots, B_N$ of the Hermitian $G$-invariant matrices, one can also work with the inverse $\varphi^{-1}$ and the standard basis of $\bigoplus_{k=1}^d \C^{m_k \times m_k}$. This is given by the matrices $E_{k,uv} \in \C^{m_k \times m_k}$ where all entries of $E_{k,uv}$ are zero except the $(u,v)$-entry which equals~$1$. This gives an \emph{explicit parametrization} of the cone of $G$-invariant positive semidefinite matrices.

\medskip

\textit{
\textbf{Step 2 (second version):} If the semidefinite program~\eqref{sdp standard} is
$G$-invariant, then it is equivalent to}
\begin{equation}
\label{sdp block second version}
\begin{split}
\max\Big\{ &\prodtrace{X}{C} : \\
& \quad X = \sum_{k = 1}^d \sum_{u,v=1}^{m_k} x_{k,uv} \varphi^{-1}(E_{k,uv})\\
& \quad x_{k,uv} = \overline{x_{k,vu}}, \; u,v=1, \ldots, m_k,\\
& \quad \begin{pmatrix} x_{k,uv} \end{pmatrix}_{1 \leq u,v \leq m_k} \succeq 0, \; k = 1, \ldots, d,\\
& \quad \prodtrace{X}{A_i} = b_i, \; i = 1, \ldots, m\Big\}.
\end{split}
\end{equation}

Hence, every $G$-invariant positive semidefinite matrix $X$ is of the form
\begin{equation*}
X = \sum_{k =1}^d \sum_{u,v=1}^{m_k} x_{k,uv} \varphi^{-1}(E_{k,uv}),
\end{equation*}
where the $d$ matrices 
\begin{equation*}
X_k = \begin{pmatrix} x_{k,uv} \end{pmatrix}_{1 \leq u,v \leq m_k}, \;\; k = 1, \ldots, d,
\end{equation*}
are positive semidefinite. Define for $(i,j) \in [n] \times [n]$ the matrix $E_k(i,j) \in \C^{m_k \times m_k}$ componentwise by
\begin{equation*}
[E_k(i,j)]_{uv} = \left[\varphi^{-1}(E_{k,uv})\right]_{ij}.
\end{equation*}
By definition we have $E_k(i,j)^* = E_k(j,i)$. Then, in the case of a permutation action, $E_k(i,j) = E_k(\sigma(i),\sigma(j))$ for all $(i,j) \in [n] \times [n]$ and $\sigma \in G$. With this notation one can write the $(i,j)$-entry of $X$ by
\begin{equation}
\label{eq:parameterization}
X_{ij} = \sum_{k = 1}^d \prodtrace{X_k}{E_{k}(i,j)}.
\end{equation}

In summary, finding a block diagonalization of a $G$-invariant
semidefinite program amounts to first identifying a basis of the
Hermitian $G$-invariant matrices and then in finding an explicit
$*$-isomorphism \eqref{block diagonal group} between the algebra of $G$-invariant matrices and the direct sum of complete matrix algebras. In the following sections we will mainly be concerned with different strategies to find such a $*$-isomorphism.

\medskip
\noindent\textbf{Step $\mathbf{1 \frac{1}{2}}$: Reducing the matrix sizes by the regular $*$-representation}
\medskip

In general finding a block diagonalization of a $G$-invariant semidefinite program is a non-trivial task, especially because one has to construct an explicit $*$-isomorphism. In cases where one does not have this one can fall back to a simpler $*$-isomorphism coming from the regular $*$-representation. In general this does not provide the maximum possible simplification. However, for instance in the case of a permutation action, it has the advantage that one can compute it on the level of knowing the orbit structure of the group action only.

For this we consider an orthogonal basis (with respect to the trace inner product $\langle \cdot, \cdot \rangle$) of the $G$-invariant algebra $(\C^{n \times n})^G$. For instance, we can use the canonical basis $C_1, \ldots, C_M$ in the case of a permutation action. By considering the multiplication table of the algebra we define the \emph{multiplication parameters} $p^t_{rs}$, sometimes also called \emph{structural parameters}, by
\begin{equation*}
C_r C_s = \sum_{t = 1}^M p^t_{rs} C_t.
\end{equation*}
In the case of a permutation action the structural parameters can be computed by knowing the structure of orbits (if one chose the canonical basis):
\begin{equation*}
p^t_{rs} = |\{k \in [n] : (i,k) \in R_r, (k,j) \in R_s\}|,
\end{equation*}
where $(i,j) \in R_t$. Here, $p^t_{rs}$ does not depend on the choice of $i$ and $j$. 
The norms $||C_r||=\sqrt{\prodtrace{C_r}{C_r}}$ equal the sizes of the corresponding orbits. We define the matrices $L(C_r)_{st} \in \C^{M \times M}$ by
\begin{equation*}
(L(C_r))_{st} = \frac{\langle C_r C_t, C_s\rangle}{\|C_t\| \; \|C_s\|} = \frac{\|C_s\|}{\|C_t\|} p^s_{rt}.
\end{equation*}

\begin{mytheorem}
\label{thm:regular representation}
Let $\mathcal{L}$ the algebra generated by the matrices $L(C_1), \ldots, L(C_M)$. Then the linear map 
\begin{equation*}
\phi : (\C^{n \times n})^G \to \mathcal{L}, \quad \phi(C_r) = L(C_r), \quad r = 1, \ldots, M,
\end{equation*}
is a $*$-isomorphism.
\end{mytheorem}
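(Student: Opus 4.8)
The plan is to exhibit $\phi$ as the composition of two maps, each of which we understand well: the left-regular representation of the abstract algebra underlying $(\C^{n\times n})^G$, followed by a conjugation by a diagonal matrix that fixes up the inner product. First I would fix the orthogonal basis $C_1,\ldots,C_M$ and introduce the rescaled basis $\widetilde C_r = C_r / \|C_r\|$, which is an \emph{orthonormal} basis of $(\C^{n\times n})^G$ with respect to $\prodtrace{\cdot}{\cdot}$. The point of the rescaling is precisely that $(L(C_r))_{st}$ was defined with the normalizing factors $1/(\|C_t\|\,\|C_s\|)$ built in, so that $L(C_r)$ is nothing but the matrix of ``left multiplication by $C_r$'' expressed in the orthonormal basis $\{\widetilde C_s\}$: one checks $C_r \widetilde C_t = \sum_s (L(C_r))_{st}\,\widetilde C_s$ directly from $C_rC_t = \sum_s p^s_{rt} C_s$.

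The key steps are then as follows. (1) \emph{$\phi$ is an algebra homomorphism.} Since $\phi(C_r)$ is the matrix of the linear operator $\lambda_r : Y \mapsto C_r Y$ on $(\C^{n\times n})^G$ in a fixed basis, and $\lambda_r \circ \lambda_s = \lambda_{C_rC_s}$ because matrix multiplication is associative, linearity and the multiplication table give $\phi(C_r)\phi(C_s) = \phi(C_rC_s)$; extending linearly, $\phi(XY)=\phi(X)\phi(Y)$ for all $X,Y$. Also $\phi$ sends the identity $I \in (\C^{n\times n})^G$ to the identity operator, hence to the identity matrix. (2) \emph{$\phi$ is injective.} If $\phi(X) = 0$ then the operator $Y\mapsto XY$ is zero on all of $(\C^{n\times n})^G$; applying it to $Y = I$ gives $X = 0$. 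Consequently $\phi$ is a linear isomorphism of $(\C^{n\times n})^G$ onto its image, which is by definition the algebra $\mathcal{L}$ generated by the $L(C_r)$ (we should note $\mathcal{L}$ equals the \emph{span} of the $L(C_r)$, not just the algebra they generate, since that span is already closed under multiplication by step (1)). (3) \emph{$\phi$ is a $*$-map.} Here I would use that left multiplication and right multiplication are adjoint-related through the trace form: for $X \in (\C^{n\times n})^G$ one has $\prodtrace{XY}{Z} = \prodtrace{Y}{X^*Z}$ for all $Y,Z$, so the adjoint of the operator $\lambda_X$ with respect to the trace inner product is $\lambda_{X^*}$. Since $\{\widetilde C_s\}$ is \emph{orthonormal}, the matrix of the adjoint operator is the conjugate transpose of the matrix of the operator; hence $\phi(X^*) = \phi(X)^*$. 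This is exactly where the normalization in the definition of $L$ earns its keep — without orthonormality the matrix of the adjoint would be $G^{-1}\phi(X)^* G$ for the Gram matrix $G$, not $\phi(X)^*$ itself.

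The main obstacle, and the step deserving the most care, is step (3): one must be scrupulous that ``adjoint of an operator'' in the abstract inner-product-space sense corresponds to ``conjugate transpose of the matrix'' only because the basis has been chosen orthonormal, and one must verify the adjunction identity $\prodtrace{XY}{Z} = \prodtrace{Y}{X^*Z}$ — this is a short computation, $\trace(Z^*XY) = \trace((X^*Z)^*Y)$, using $X^{**}=X$. Everything else is bookkeeping with the multiplication table. Finally, once $\phi$ is shown to be a $*$-isomorphism onto $\mathcal{L}$, one observes that $\mathcal{L}$ is itself a matrix $*$-algebra, so Theorem~\ref{thm:block diagonal star} applies to $\mathcal{L}$ and one may compose to obtain the block-diagonalizing $*$-isomorphism of $(\C^{n\times n})^G$; but that composition is a remark about how to \emph{use} the theorem rather than part of its proof.
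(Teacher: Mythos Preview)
Your proposal is correct and follows essentially the same approach as the paper's proof in Section~\ref{sub:Regularstar}: define $L$ as left multiplication expressed in an orthonormal basis, check that it is a multiplicative linear map, verify the $*$-property via the adjunction $\prodtrace{A^*C_t}{C_s}=\overline{\prodtrace{AC_s}{C_t}}$, and prove injectivity. The only cosmetic difference is in the injectivity step: you evaluate the zero operator at $Y=I$ (which lies in $(\C^{n\times n})^G$), whereas the paper argues that $L(A)=0$ forces $AA^*=0$ and hence $A=0$; both are immediate.
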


We will give a proof of this theorem in Section~\ref{sub:Regularstar}. There we will show that the $*$-isomorphism is the regular $*$-representation of the $G$-invariant algebra associated with the orthonormal basis $C_1/\|C_1\|, \ldots, C_M/\|C_M\|$. Again, since $\phi$ is a $*$-isomorphism between algebras with unity it preserves eigenvalues.  This means that a test whether a $G$-invariant matrix of size $n \times n$ is positive semidefinite can be reduced to testing whether an $M \times M$ matrix is positive semidefinite.

\medskip

\textit{
\textbf{Step $\mathbf{1 \frac{1}{2}:}$} If the semidefinite program~\eqref{sdp standard} is
$G$-invariant, then it is equivalent to \eqref{sdp block first version} where the $*$-isomorphism $\varphi$ is replaced by $\phi$.
}

\section{Matrix $*$-algebras}
\label{sec:matrix star}

In Section~\ref{ssec:sdp invariant} we saw that the process of block diagonalizing a semidefinite program can be naturally done in the framework of matrix $\ast$-algebras using the main structure theorem (Theorem~\ref{thm:block diagonal star}). In this section we prove this main structure theorem. Although we are mainly interested in the case when the matrix $*$-algebra comes from a group, working in the more general framework here, does not cause much extra work. Furthermore the proof of the main structure theorem we give here provides an algorithmic way for finding a block diagonalization. 

We start by giving the basic definitions, examples, and results of matrix $*$-algebras (Section~\ref{sub:definitions}--Section~\ref{sec:hom star}). 
In Section~\ref{sub:Blockdiag} we prove the main structure theorem which gives a very efficient representation of a matrix $*$-algebra $\MA$: We show that $\MA$ is $*$-isomorphic to a direct sum of full matrix $*$-algebras. The corresponding $*$-isomorphism is called a \emph{block diagonalization} of $\MA$. This corresponds to Step $2$ in the introduction. After giving the proof we interpret it in the context of groups and we discuss a numerical algorithm for finding a block diagonalization which is based on the proof. In Section \ref{sub:Regularstar} we consider the regular $*$-representation, which embeds $\mathcal{A}$ into $\C^{M \times M}$, where $M=\dim\MA$. This corresponds to Step $1 \frac{1}{2}$ in the introduction.

\subsection{Definitions and examples}
\label{sub:definitions}

\begin{mydefinition}A \emph{matrix $\ast$-algebra} is a linear subspace $\mathcal{A}\subseteq \C^{n\times n}$ of complex $n\times n$ matrices, that is closed under matrix multiplication and under taking the conjugate transpose. The conjugate transpose of a matrix $A$ is denoted $A^*$.
\end{mydefinition}

Matrix $*$-algebras are finite dimensional $C^*$-algebras and many results here can be extended to a more general setting. For a gentle introduction to $C^*$-algebras we refer to Takesaki \cite{Takesaki}.

Trivial examples of matrix $*$-algebras are the \emph{full matrix algebra} $\C^{n\times n}$ and the \emph{zero algebra} $\{0\}$. From given matrix $*$-algebras $\MA\subseteq \C^{n\times n}$ and $\MB\subseteq \C^{m\times m}$, we can construct the \emph{direct sum} $\MA\oplus \MB$ and \emph{tensor product} $\MA\otimes \MB$ defined by
\begin{equation*}
\MA\oplus \MB=\left\{\left(\begin{smallmatrix}A&0\\0&B\end{smallmatrix}\right) :  A\in \MA, B\in \MB\right\},
\end{equation*}
\begin{equation*}
\MA\otimes \MB=\left\{\sum_{i=1}^kA_i\otimes B_i : k\in \mathbb{N}, A_i\in \MA, B_i\in \MB\right\},
\end{equation*}
where $A\otimes B\in \C^{nm\times nm}$ denotes the Kronecker- or tensor product. The \emph{commutant} of $\MA$ is the matrix $*$-algebra
\begin{equation*}
\MA' =\{B\in \C^{n\times n} :  BA=AB \text{ for all $A\in\MA$}\}.
\end{equation*} 

Many interesting examples of matrix $*$-algebras come from unitary group representations, as we already demonstrated in the introduction: Given a unitary representation $\pi:G\to \Gl(n,\C)$, the set of invariant matrices $(\C^{n\times n})^G =\{A\in \C^{n\times n} : \pi(g)A\pi(g)^{-1}=A\}$ is a matrix $*$-algebra. It is the commutant of the matrix $*$-algebra linearly spanned by the matrices $\pi(g)$ with $g \in G$. If the unitary group representation is given by permutation matrices then the canonical basis of the algebra $(\C^{n \times n})^G$ are the zero-one incidence matrices of orbits on pairs $C_1, \ldots, C_M$, see Step 1 in Section~\ref{ssec:sdp invariant}.

Other examples of matrix $*$-algebras, potentially not coming from groups, include the (complex) Bose-Mesner algebra of an \emph{association scheme}, see e.g.\ Bannai, Ito \cite{BannaiIto}, and Brouwer, Cohen, Neumaier \cite{BrouwerCohenNeumaier} and more generally, the adjacency algebra of a \emph{coherent configuration}, see e.g.\ Cameron \cite{Cameron}. 

\subsection{Commutative matrix $*$-algebras}

A matrix $*$-algebra $\MA$ is called \emph{commutative} (or \emph{Abelian}) if any pair of its elements commute: $AB=BA$ for all $A,B\in \MA$. Recall that a matrix $A$ is \emph{normal} if $AA^*=A^*A$. The spectral theorem for normal matrices states that if $A$ is normal, there exists a unitary matrix $U$ such that $U^*AU$ is a diagonal matrix. More generally, a set of commuting normal matrices can be simultaneously diagonalized (see e.g.\ Horn, Johnson \cite{HornJohnson}). Since any algebra of diagonal matrices has a basis of zero-one diagonal matrices with disjoint support, we have the following theorem. 

\begin{mytheorem}\label{spectral}
Let $\MA\subseteq\C^{n\times n}$ be a commutative matrix $*$-algebra. Then there exist a unitary matrix $U$ and a partition $[n]=S_0\cup S_1\cup\cdots\cup S_k$ with $S_1,\ldots,S_k$ nonempty, such that 
\begin{equation*}
U^*\MA U=\{\lambda_1I_1+\cdots+\lambda_kI_k : \lambda_1,\ldots,\lambda_k\in \C\},
\end{equation*}
where $I_i$ is the zero-one diagonal matrix with ones in positions from $S_i$ and zeroes elsewhere. 
\end{mytheorem}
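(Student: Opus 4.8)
The plan is to reduce the theorem to a statement about $*$-subalgebras of the algebra of diagonal matrices, using simultaneous diagonalization, and then to extract the idempotents $I_i$ by a Lagrange interpolation argument. First I would note that every $A\in\MA$ is normal: since $\MA$ is closed under $*$ we have $A^*\in\MA$, and since $\MA$ is commutative $AA^*=A^*A$. Picking a finite basis $A_1,\dots,A_M$ of $\MA$, these are pairwise commuting normal matrices, so by the simultaneous diagonalization theorem (Horn, Johnson \cite{HornJohnson}) there is a unitary $U$ with all $U^*A_jU$ diagonal; by linearity $U^*AU$ is diagonal for every $A\in\MA$. As $X\mapsto U^*XU$ is a $*$-isomorphism, $\mathcal{D}:=U^*\MA U$ is a $*$-subalgebra of the algebra of diagonal matrices. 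Reading off diagonals identifies $\mathcal{D}$ with a linear subspace $V\subseteq\C^n$ that is closed under coordinatewise (Hadamard) multiplication and under coordinatewise complex conjugation.

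Next I would define an equivalence relation on $[n]$ by $i\sim j$ iff $v_i=v_j$ for all $v\in V$, set $S_0=\{i\in[n]:v_i=0\text{ for all }v\in V\}$ (possibly empty), and let $S_1,\dots,S_k$ be the remaining, necessarily nonempty, classes; let $I_i$ be the zero-one diagonal matrix supported on $S_i$. The inclusion $V\subseteq\{\lambda_1 I_1+\cdots+\lambda_k I_k:\lambda_i\in\C\}$ is immediate, since every $v\in V$ is constant on each class and vanishes on $S_0$.

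For the converse I would use conjugation-closedness to write $V=V_\R\oplus iV_\R$ with $V_\R=V\cap\R^n$ still Hadamard-closed, and then produce a single ``generic'' $v\in V_\R$: for $l\neq l'$ the set of $v\in V_\R$ whose value on $S_l$ equals its value on $S_{l'}$ is a proper subspace of $V_\R$ (because $S_l,S_{l'}$ are distinct classes, hence separated by some real element), and for $l\geq1$ the set of $v\in V_\R$ vanishing on $S_l$ is a proper subspace (because $S_l\not\subseteq S_0$). Since $\R$ is infinite, $V_\R$ is not a finite union of proper subspaces, so there is $v\in V_\R$ whose values $c_1,\dots,c_k$ on $S_1,\dots,S_k$ are pairwise distinct and nonzero. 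Lagrange interpolation at the distinct nodes $0,c_1,\dots,c_k$ yields, for each $i$, a polynomial $p_i$ with $p_i(0)=0$ and $p_i(c_l)=\delta_{il}$; applying $p_i$ coordinatewise to $v$ produces exactly $\mathbf{1}_{S_i}$, and since $p_i$ has zero constant term this is a combination of the Hadamard powers $v,v^{\odot2},v^{\odot3},\dots$ and therefore lies in $V_\R$. Hence $I_i\in\mathcal{D}$ for every $i$, so $\mathcal{D}=\{\lambda_1 I_1+\cdots+\lambda_k I_k:\lambda_i\in\C\}$, and conjugating back by $U$ gives the theorem.

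The main obstacle is this converse inclusion, i.e.\ showing the indicator matrices $I_i$ genuinely lie in $\mathcal{D}$: this is the only step that uses the multiplicative structure of the algebra, via interpolation, and it requires some care because $\MA$ need not contain the identity matrix — which is precisely why the interpolating polynomials must have vanishing constant term and why the zero-locus $S_0$ carries no idempotent. If one is content to quote the standard fact (used in the text just before the theorem) that every algebra of diagonal matrices has a basis of pairwise disjointly supported zero-one diagonal matrices, this step is immediate; the remaining ingredients — normality, simultaneous diagonalization, and the forward inclusion — are routine.
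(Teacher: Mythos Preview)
Your proof is correct and follows essentially the same approach as the paper: the paper's argument (given in the paragraph immediately preceding the theorem) is simply that all elements are normal and commuting, hence simultaneously diagonalizable, and then invokes as a standard fact that any algebra of diagonal matrices has a basis of disjointly supported zero-one diagonal matrices. Your Lagrange interpolation argument is a clean, self-contained verification of that last fact (including the subtlety about the possible absence of an identity, handled via $S_0$ and polynomials with vanishing constant term), which the paper leaves to the reader; otherwise the two arguments coincide.
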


The matrices $E_i =UI_iU^*$ satisfy $E_0+E_1+\cdots+E_k=I$, $E_iE_j=\delta_{ij}E_i$ and $E_i=E_i^*$. The matrices $E_1,\ldots, E_k$ are the \emph{minimal idempotents} of $\MA$ and form an orthogonal basis of $\MA$. Unless $S_0=\emptyset$, $E_0$ does not belong to $\MA$.

Geometrically, we have an orthogonal decomposition
\begin{equation*}
\C^n=V_0\oplus V_1\oplus\cdots\oplus V_k,
\end{equation*} 
where $E_i$ is the orthogonal projection onto $V_i$ or equivalently, $V_i$ is the space spanned by the columns of $U$ corresponding to $S_i$. The space $V_0$ is the maximal subspace contained in the kernel of all matrices in $\MA$.

\subsection{Positive semidefinite elements}

Recall that a matrix $A$ is positive semidefinite ($A\succeq 0$) if and only if $A$ is \emph{Hermitian} (that is $A^*=A$) and all eigenvalues of $A$ are nonnegative. Equivalently, $A=U^*DU$ for some unitary matrix $U$ and nonnegative diagonal matrix $D$.

Let $\MA$ be a matrix $*$-algebra. Positive semidefiniteness can be characterized in terms of $\MA$.   
\begin{myproposition}\label{BstarB}
An element $A\in \MA$ is positive semidefinite if and only if $A=B^*B$ for some $B\in \MA$. 
\end{myproposition}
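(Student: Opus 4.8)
The plan is to dispose of the easy implication directly and to obtain the reverse one by producing an explicit square root of $A$ inside $\MA$. If $A = B^*B$ with $B \in \MA$, then $A^* = (B^*B)^* = B^*B = A$ and $\alpha^* A \alpha = \|B\alpha\|^2 \ge 0$ for every $\alpha \in \C^n$, so $A \succeq 0$; nothing about $\MA$ is used here. For the converse the natural candidate for $B$ is the positive semidefinite square root $A^{1/2}$, and the only real issue is to check that this square root already lies in $\MA$. This needs a little care, since $\MA$ is not assumed to contain the identity, so we may only use polynomials in $A$ with vanishing constant term.

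To make this precise I would pass to the subalgebra $\MA_0 \subseteq \MA$ generated by $A$. Since $A$ is Hermitian it is in particular normal, and every element of $\MA_0$ is a polynomial in $A$ with zero constant term, so $\MA_0$ is a commutative matrix $*$-algebra contained in $\MA$. Applying Theorem~\ref{spectral} to $\MA_0$ yields minimal idempotents $E_1, \dots, E_k \in \MA_0 \subseteq \MA$ forming an orthogonal basis of $\MA_0$ with $E_i = E_i^*$ and $E_iE_j = \delta_{ij}E_i$.

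Now write $A = \sum_{i=1}^k \mu_i E_i$. Comparing with $A^* = \sum_i \overline{\mu_i} E_i$ and using linear independence of the $E_i$ shows each $\mu_i$ is real; evaluating $\alpha^* A \alpha$ on a nonzero vector $\alpha$ in the range of $E_i$ (nonempty since the block $S_i$ is nonempty) gives $\mu_i \|\alpha\|^2 \ge 0$, so $\mu_i \ge 0$. Hence I can set
\begin{equation*}
B = \sum_{i=1}^k \sqrt{\mu_i}\, E_i \in \MA_0 \subseteq \MA ,
\end{equation*}
and then $B^* = B$ and $B^*B = B^2 = \sum_{i,j} \sqrt{\mu_i \mu_j}\, E_iE_j = \sum_i \mu_i E_i = A$, which completes the proof.

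The main obstacle is exactly the one flagged above: ensuring the square root stays in $\MA$. The route via Theorem~\ref{spectral} handles it by using the spectral decomposition of the commutative algebra generated by $A$. An alternative is to approximate $\sqrt{t}$ on $[0, \|A\|]$ uniformly by polynomials $p_n$ with $p_n(0) = 0$ (possible since $\sqrt{\cdot}$ is continuous and vanishes at $0$), note that $p_n(A) \in \MA$ and $p_n(A) \to A^{1/2}$ in norm, and conclude from the fact that the finite-dimensional subspace $\MA$ is closed. Both arguments rest on the same mechanism of building the square root out of $A$ itself rather than from an ambient functional calculus.
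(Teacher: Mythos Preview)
Your proof is correct and follows essentially the same approach as the paper: both construct the positive square root of $A$ as a polynomial in $A$ (hence lying in $\MA$), with you routing through the idempotent decomposition of Theorem~\ref{spectral} while the paper simply chooses an interpolating polynomial $p$ with $p(\lambda_i)=\sqrt{\lambda_i}$ on the eigenvalues of $A$ and sets $B=p(A)$. Your explicit attention to the possible absence of the identity in $\MA$ (forcing the polynomial to have zero constant term) is a point the paper glosses over but which is easily absorbed into either argument.
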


\begin{proof}
The `if' part is trivial. To see the `only if' part, let $A\in \MA$ be positive semidefinite and write $A=U^*DU$ for some unitary matrix $U$ and (nonnegative real) diagonal matrix $D$. Let $p$ be a polynomial with $p(\lambda_i)=\sqrt{\lambda_i}$ for all eigenvalues $\lambda_i$ of $A$. Then taking $B=p(A)\in \MA$ we have $B=U^*p(D)U$ and hence $B^*B=U^*p(D)p(D)U=U^*DU=A$.
\smartqed\qed
\end{proof}

Considered as a cone in the space of Hermitian matrices in $\MA$, the cone of positive semidefinite matrices is self-dual:

\begin{mytheorem}
Let $A\in \MA$ be Hermitian.  Then $A\succeq 0$ if and only if $\prodtrace{A}{B}\geq 0$ for all $B\succeq 0$ in $\MA$.
\end{mytheorem}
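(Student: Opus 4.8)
The plan is to prove the two implications separately: the forward one is a standard trace computation, and the reverse one uses the spectral decomposition of the single Hermitian matrix $A$ together with Theorem~\ref{spectral}.

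For the ``only if'' part, suppose $A,B\succeq 0$ both lie in $\MA$ (their membership in $\MA$ actually plays no role). Write $B=D^*D$ — for instance $D=B^{1/2}$, or by Proposition~\ref{BstarB}. Then, using that $B$ is Hermitian and the cyclicity of the trace, $\prodtrace{A}{B}=\trace(B^*A)=\trace(D^*DA)=\trace(DAD^*)\ge 0$, because $DAD^*\succeq 0$ and the trace of a positive semidefinite matrix, being the sum of its nonnegative eigenvalues, is nonnegative.

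For the ``if'' part I argue by contraposition. Assume $A=A^*$ is not positive semidefinite, so $A$ has a negative eigenvalue; I will produce $B\succeq 0$ in $\MA$ with $\prodtrace{A}{B}<0$. Let $\MB\subseteq\MA$ be the matrix $*$-algebra generated by $A$: it is contained in $\MA$ since $\MA$ is closed under products and linear combinations, it is closed under $*$ because $A^*=A$ (so every element, being a polynomial in $A$ with zero constant term, is Hermitian), and it is commutative because it is singly generated. Apply Theorem~\ref{spectral} to $\MB$: it yields minimal idempotents $E_1,\dots,E_k\in\MB\subseteq\MA$ which are positive semidefinite orthogonal projections onto mutually orthogonal subspaces $V_i$, are pairwise orthogonal in the trace inner product with $\prodtrace{E_i}{E_j}=\delta_{ij}\|E_i\|^2$, and form a basis of $\MB$. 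Since $A\in\MB$ we may write $A=\sum_{i=1}^k\mu_iE_i$, and the $\mu_i$ are real because $A=A^*$ and the $E_i$ are linearly independent. As $A$ acts by multiplication by $\mu_i$ on $V_i$ and by $0$ on the remaining subspace $V_0$, every eigenvalue of $A$ lies in $\{0,\mu_1,\dots,\mu_k\}$; hence the negative eigenvalue of $A$ is some $\mu_{i_0}<0$. Taking $B=E_{i_0}$ gives $\prodtrace{A}{B}=\sum_i\mu_i\prodtrace{E_i}{E_{i_0}}=\mu_{i_0}\|E_{i_0}\|^2<0$, the required contradiction. (Stated positively: if $\prodtrace{A}{E_i}=\mu_i\|E_i\|^2\ge 0$ for every $i$, then all $\mu_i\ge 0$ and $A=\sum_i\mu_iE_i$ is a nonnegative combination of positive semidefinite matrices, hence $A\succeq 0$.)

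The one point needing care — and hence the main obstacle — is that a matrix $*$-algebra need not contain the identity, so the spectral projections of $A$ cannot be assumed a priori to belong to $\MA$. Passing to the subalgebra $\MB$ generated by $A$ and invoking Theorem~\ref{spectral}, whose idempotents are built out of $A$ itself, resolves this; alternatively one observes directly that the orthogonal projection onto the negative eigenspace of $A$ is a polynomial in $A$ with zero constant term (Lagrange interpolation, taking the value $0$ at the node $0$), hence an element of $\MA$. Everything else is bookkeeping.
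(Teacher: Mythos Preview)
Your proof is correct and follows essentially the same route as the paper: both pass to the commutative $*$-subalgebra $\MB$ generated by $A$, invoke Theorem~\ref{spectral} to write $A=\sum_i\mu_iE_i$ with $E_i\in\MB\subseteq\MA$ positive semidefinite, and read off $\mu_i=\prodtrace{A}{E_i}/\prodtrace{E_i}{E_i}\ge 0$. You phrase the ``if'' direction contrapositively while the paper does it directly, but this is only cosmetic, and your additional remarks on why the spectral projections lie in $\MA$ are a welcome clarification.
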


\begin{proof}
Necessity is clear. For sufficiency, let $A\in \MA$ be Hermitian and let $\MB\subseteq \MA$ be the $*$-subalgebra generated by $A$. By Theorem \ref{spectral} we can write $A=\lambda_1E_1+\cdots+\lambda_kE_k$, where the $E_i$ are the minimal idempotents of $\MB$. The $E_i$ are positive semidefinite since their eigenvalues are zero or one. Hence by assumption, $\lambda_i =\tfrac{\prodtrace{A}{E_i}}{\prodtrace{E_i}{E_i}}\geq 0$. Therefore $A$ is a nonnegative linear combination of positive semidefinite matrices and hence positive semidefinite. 
\smartqed\qed
\end{proof}

As a corollary we have:

\begin{mycorollary}
The orthogonal projection $\pi_{\MA}:\C^{n\times n}\to\MA$ preserves positive semidefiniteness. 
\end{mycorollary}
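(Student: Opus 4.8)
The plan is to deduce this immediately from the self-duality theorem just proved, with no spectral computation. Writing $\pi_{\MA}:\C^{n\times n}\to\MA$ for the orthogonal projection with respect to the trace inner product, I would first check that $\pi_{\MA}$ sends Hermitian matrices to Hermitian matrices --- so that the self-duality criterion is applicable to $\pi_{\MA}(X)$ --- and then check that for $X\succeq 0$ the linear functional $B\mapsto\prodtrace{\pi_{\MA}(X)}{B}$ is nonnegative on the cone of positive semidefinite elements of $\MA$. By the preceding theorem these two facts together give $\pi_{\MA}(X)\succeq 0$.

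For the first fact, note that the conjugate-transpose map $\tau:Y\mapsto Y^{*}$ is a conjugate-linear isometry of $\C^{n\times n}$: by cyclicity of the trace, $\prodtrace{Y^{*}}{Z^{*}}=\trace(ZY^{*})=\trace(Y^{*}Z)=\prodtrace{Z}{Y}$, and clearly $\tau(\lambda Y)=\overline{\lambda}\,Y^{*}$. Since $\MA$ is a $*$-algebra, $\tau$ maps $\MA$ onto $\MA$, hence also $\MA^{\perp}$ onto $\MA^{\perp}$; an isometry that preserves a subspace commutes with the orthogonal projection onto it, so $\pi_{\MA}(Y^{*})=\pi_{\MA}(Y)^{*}$ for every $Y$. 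In particular $\pi_{\MA}(X)$ is Hermitian whenever $X$ is, which covers the case $X\succeq 0$.

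For the second fact, use that $\pi_{\MA}$, being an orthogonal projection, is self-adjoint for the trace inner product and fixes $\MA$ pointwise; hence for $B\in\MA$ we have $\prodtrace{\pi_{\MA}(X)}{B}=\prodtrace{X}{\pi_{\MA}(B)}=\prodtrace{X}{B}$. If moreover $B\succeq 0$ then, by Proposition~\ref{BstarB}, $B=C^{*}C$ for some $C\in\MA$, and by cyclicity $\prodtrace{X}{C^{*}C}=\trace(C^{*}CX)=\trace(CXC^{*})\geq 0$, since $CXC^{*}$ is a congruence of the positive semidefinite matrix $X$. Thus $\prodtrace{\pi_{\MA}(X)}{B}\geq 0$ for all $B\succeq 0$ in $\MA$, and the self-duality theorem yields $\pi_{\MA}(X)\succeq 0$.

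There is no real obstacle here; the only thing needing care is the bookkeeping with the complex trace inner product --- that it is linear in the first argument and satisfies $\prodtrace{Y}{Z}=\overline{\prodtrace{Z}{Y}}$ --- in particular verifying that $\tau$ is anti-unitary and therefore commutes with $\pi_{\MA}$. The conceptual reason this is a one-line corollary is that, by the previous theorem, positive semidefiniteness within $\MA$ is detected by pairing against the self-dual cone of positive semidefinite elements of $\MA$, and an orthogonal projection leaves such pairings unchanged.
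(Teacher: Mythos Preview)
Your proposal is correct and is precisely the argument the paper has in mind: the corollary is stated without proof, immediately following the self-duality theorem, and your two steps (Hermitianity of $\pi_{\MA}(X)$, then $\prodtrace{\pi_{\MA}(X)}{B}=\prodtrace{X}{B}\geq 0$ for $B\succeq 0$ in $\MA$) are exactly how one unpacks it. The only minor remark is that invoking Proposition~\ref{BstarB} to show $\prodtrace{X}{B}\geq 0$ is more than needed --- this is just the standard fact that the trace inner product of two positive semidefinite matrices is nonnegative --- but what you wrote is certainly valid.
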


This implies that if the input matrices of the semidefinite program \eqref{sdp standard} lie in some matrix $*$-algebra, then we can assume that the optimization variable $X$ lies in the same matrix $*$-algebra: If \eqref{sdp standard} is given by matrices $C,A_1,\ldots,A_m\in \MA$ for some matrix $*$-algebra $\MA$, the variable $X$ may be restricted to $\MA$ without changing the objective value. Indeed, any feasible $X$ can be replaced by $\pi_{\MA}(X)$, which is again feasible and has the same objective value. When $\MA$ is the invariant algebra of a group, this amounts to replacing $X$ by the average under the action of the group.
 
\subsection{$*$-Homomorphisms and block diagonalization}
\label{sec:hom star}

\begin{mydefinition}
A map $\phi:\MA\to \MB$ between two matrix $*$-algebras $\MA$ and $\MB$ is called a \emph{$*$-homomorphism} if 
\begin{enumerate}
\item[(i)] $\phi$ is linear,
\item[(ii)] $\phi(AB)=\phi(A)\phi(B)$ for all $A,B\in \MA$,
\item[(iii)] $\phi(A^*)=\phi(A)^*$ for all $A\in \MA$.
\end{enumerate}
If $\phi$ is a bijection, the inverse map is also a $*$-homomorphism and $\phi$ is called a \emph{$*$-isomorphism}.
\end{mydefinition}

It follows directly from Proposition \ref{BstarB} that $*$-homomorphisms preserve positive semidefiniteness:
Let $\phi:\MA\to \MB$ be a $*$-homomorphism. Then $\phi(A)$ is positive semidefinite for every positive semidefinite $A\in \MA$.

The implication of this for semidefinite programming is the following. Given a semidefinite program with matrix variable restricted to a matrix $*$-algebra $\MA$ and a $*$-isomorphism $\MA\to \MB$, we can rewrite the semidefinite program in terms of matrices in $\MB$. This can be very useful if the matrices in $\MB$ have small size compared to those in $\MA$. In the following, we will discuss two such efficient representations of a general matrix $*$-algebra.

\subsection{Block diagonalization}
\label{sub:Blockdiag}

In this section, we study the structure of matrix $*$-algebras in some more detail. The main result is, that any matrix $*$-algebra is $*$-isomorphic to a direct sum of full matrix algebras:

\begin{mytheorem}\label{thm:blockdiagonal}
\textbf{(= Theorem~\ref{thm:block diagonal star})}
Let $\mathcal{A} \subseteq \C^{n \times n}$ be a matrix $*$-algebra. There are numbers $d$, and $m_1, \ldots, m_d$ so that there is $*$-isomorphism between $\mathcal{A}$ and a direct sum of complete matrix algebras
\begin{equation*}
\varphi : \mathcal{A} \to \bigoplus_{k=1}^d \C^{m_k \times m_k}.
\end{equation*}
\end{mytheorem}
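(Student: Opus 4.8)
The plan is to prove the structure theorem by induction on $n$ (or more precisely on $\dim \MA$), following the classical Wedderburn-type argument adapted to the $*$-setting. First I would dispose of a trivial reduction: if $\MA$ contains a nonzero matrix $A$ but the ambient space $\C^n$ has a subspace $V_0$ on which \emph{every} element of $\MA$ vanishes, then restricting to the orthogonal complement of $V_0$ loses nothing, so we may assume $\MA$ acts nondegenerately. The real engine of the proof is a splitting lemma: if $\MA \subseteq \C^{n\times n}$ is a matrix $*$-algebra and $e \in \MA$ is a \emph{central idempotent} with $e = e^* = e^2$ lying in the center of $\MA$, then $e$ orthogonally projects $\C^n$ onto a subspace $V_e$ that is invariant under $\MA$, and $\MA$ decomposes as $\MA e \oplus \MA(1-e)$ as a direct sum of $*$-algebras acting on $V_e$ and $V_e^\perp$ respectively (here $1$ denotes the identity of $\MA$, which may differ from $I_n$ — another reason to first reduce to the nondegenerate case where $1 = I_n$). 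Then the theorem reduces to the case where $\MA$ has no nontrivial central idempotent, i.e.\ $\MA$ is \emph{$*$-simple}, and in that case one must show $\MA \cong \C^{m\times m}$ for a single $m$.

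For the $*$-simple case I would argue as follows. Pick a minimal left ideal $L$ of $\MA$ — one exists by finite-dimensionality, e.g.\ take a left ideal of smallest positive dimension. Using the $*$-structure and Proposition~\ref{BstarB}, one shows $L$ can be generated by a single projection: choosing $0 \neq a \in L$, the element $a^*a$ is a positive semidefinite element of $\MA$, and functional calculus (as in the proof of Proposition~\ref{BstarB}) produces a projection $p \in \MA$ with $\MA p = L$. The key computation is then that $p\MA p$ is a $*$-algebra which, by minimality of $L$ and a Schur-type argument, is just $\C p$: any self-adjoint element of $p\MA p$ has, by Theorem~\ref{spectral}, a spectral decomposition into projections lying in $p \MA p$, and a proper nonzero such projection would split $L$, contradicting minimality. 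Now let $H_1, \ldots, H_{m}$ be a maximal family of minimal left ideals of the form $\MA p_i$ with the $p_i$ \emph{orthogonal} projections ($p_ip_j = 0$ for $i\neq j$); maximality together with $*$-simplicity forces $p_1 + \cdots + p_m = 1 = I_n$ (otherwise $q = 1 - \sum p_i$ is a nonzero projection, $\MA q$ contains a minimal left ideal, and one extracts one more orthogonal $p_{m+1}$ from it). One then checks that all the $H_i$ are isomorphic as left $\MA$-modules (a minimal left ideal $\MA p$ and $\MA p'$ are isomorphic iff $p\MA p' \neq 0$, and $*$-simplicity prevents a nonzero two-sided ideal that would obstruct this). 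Finally, the map sending $A \in \MA$ to the matrix $\bigl( \text{the } (i,j)\text{-block of } A \text{ acting } H_j \to H_i \bigr)$, after fixing $\MA$-isomorphisms $H_1 \cong H_i$, is a $*$-isomorphism $\MA \to \C^{m\times m}$; checking it is well-defined, bijective, and $*$-preserving is the routine but somewhat lengthy bookkeeping, where the $*$-compatibility of the chosen module isomorphisms (which one arranges using positive definiteness of an invariant inner product) is what makes $\phi(A^*) = \phi(A)^*$ come out right.

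The main obstacle, and the step I would spend the most care on, is producing the decomposition into $*$-simple pieces in the first place — equivalently, showing that the center $Z(\MA)$ of $\MA$ is spanned by its minimal idempotents. The center is a commutative $*$-algebra, so Theorem~\ref{spectral} applies and gives a basis of orthogonal central projections $e_1, \ldots, e_d$ with $e_1 + \cdots + e_d = 1$; these are the central idempotents that cut $\MA$ into $\MA e_1, \ldots, \MA e_d$, and one must verify each $\MA e_k$ is $*$-simple, i.e.\ that its center is one-dimensional (which holds precisely because the $e_k$ are \emph{minimal} idempotents of $Z(\MA)$). The secondary delicate point is the degenerate case $1_\MA \neq I_n$: since $\MA$ need not be unital as a subalgebra of $\C^{n\times n}$ with the ambient identity, one should note that $\MA$ always has its \emph{own} identity (the algebra is finite-dimensional and semisimple, or directly: the orthogonal projection onto $\overline{\sum_{A\in\MA}\operatorname{im}(A)}$ lies in $\MA$), and one works with that throughout; the "extra" summand $\C^{0\times 0}$ on the complement simply does not appear in the direct sum. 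Everything else is standard linear algebra over $\C$ together with repeated use of Theorem~\ref{spectral} and Proposition~\ref{BstarB}.
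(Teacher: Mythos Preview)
Your argument is correct and follows the classical Wedderburn route: first split $\MA$ along the minimal idempotents of its center (which you obtain via Theorem~\ref{spectral}), then handle each $*$-simple summand by building a complete system of orthogonal minimal projections $p_1,\ldots,p_m$ and reading off the matrix units. The paper takes a genuinely different path: it begins not with the center but with an inclusionwise \emph{maximal} commutative $*$-subalgebra $\MB\subseteq\MA$, simultaneously diagonalizes $\MB$ to get a partition $[n]=S_0\cup S_1\cup\cdots\cup S_k$, and then studies the off-diagonal blocks $A_{ij}=I_iAI_j$ directly. The key observation there is that each nonzero $A_{ij}$ is a scalar multiple of a unitary matrix, which lets one define an equivalence relation $i\sim j \Leftrightarrow I_i\MA I_j\neq\{0\}$; the central idempotents $E_1,\ldots,E_d$ then \emph{emerge} as sums of the $I_i$ over equivalence classes, rather than being produced first from $Z(\MA)$. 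What your approach buys is conceptual clarity and a closer link to module theory (minimal left ideals, Schur's lemma); what the paper's approach buys is an explicit, hands-on construction that is immediately algorithmic --- one literally diagonalizes a single generic Hermitian element to find $\MB$, and the unitary blocks $A_{1i}$ furnish the change of basis --- which is why the paper follows its proof with a paragraph on numerical computation. Both proofs ultimately rely on Theorem~\ref{spectral} and Proposition~\ref{BstarB}, but they invoke them at different stages and for different purposes.
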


This theorem is well-known in the theory of $C^*$-algebras, where it is generalized to $C^*$-algebras of compact operators on a Hilbert space (cf. Davidson \cite[Chapter I.10]{Davidson}). 

Some terminology: Let $\MA$ be a matrix $*$-algebra. A matrix $*$-algebra $\MB$ contained in $\MA$ is called a \emph{$*$-subalgebra} of $\MA$. An important example is the \emph{center} of $\MA$ defined by
\begin{equation*}
\mathcal{C}_{\MA} =\{A\in \MA :  AB=BA \text{ for all $B\in\MA$}\}.
\end{equation*}

There is a unique element $E\in \MA$ such that $EA=AE=A$ for every $A\in \MA$, which is called the \emph{unit} of $\MA$. 
If $\MA$ is non-zero and $\mathcal{C}_{\MA}=\C E$ (equivalently: $\MA$ has no nontrivial \emph{ideal}), the matrix $*$-algebra $\MA$ is called \emph{simple}.  

We shall prove that every matrix $*$-algebra is the direct sum of simple matrix $*$-algebras:
\begin{equation*}
\MA=\bigoplus_{i=1}^d E_i\MA,
\end{equation*}
where the $E_i$ are the minimal idempotents of $\mathcal{C}_{\MA}$. Every simple matrix $*$-algebra is $*$-isomorphic to a full matrix algebra. Together these facts imply Theorem \ref{thm:blockdiagonal}.

To conclude this section, we will give an elementary and detailed proof of Theorem \ref{thm:blockdiagonal}.
\begin{proof} 
Let $\MB\subseteq \MA$ be an inclusionwise maximal commutative $*$-subalgebra of $\MA$. Then any $A\in \MA$ that commutes with every element of $\MB$, is itself an element of $\MB$. Indeed, if $A$ is normal, this follows from the maximality of $\MB$ since $\MB\cup \{A,A^*\}$ generates a commutative $*$-algebra containing $\MB$. If $A$ is not normal, then $A+A^*\in \MB$ by the previous argument and hence $A(A+A^*)=(A+A^*)A$, contradicting the fact that $A$ is not normal.

By replacing $\MA$ by $U^*\MA U$ for a suitable unitary matrix $U$, we may assume that $\MB$ is in diagonal form as in Theorem \ref{spectral}. For any $A\in \MA$ and $i,j=0,\ldots,k$, denote by $A_{ij}\in \C^{|S_i|\times |S_j|}$ the restriction of $I_iAI_j$ to the rows in $S_i$ and columns in $S_j$. Let $A\in \MA$ and $i,j\in \{0,\ldots,k\}$. We make the following observations: 
\begin{enumerate}
\item[(i)] $A_{ii}$ is a multiple of the identity matrix and $A_{00}=0$,
\item[(ii)] $A_{0i}$ and $A_{i0}$ are zero matrices,
\item[(iii)] $A_{ij}$ is either zero or a nonzero multiple of a (square) unitary matrix.
\end{enumerate}
Item (i) follows since $I_iAI_i$ commutes with $I_0,\ldots,I_k$ and therefore belongs to $\MB$. Hence $I_iAI_i$ is a multiple of $I_i$ and $I_0AI_0=0$ since $I_0\MB I_0=\{0\}$. Similarly, $I_0AA^*I_0=0$, which implies that $I_0A=0$, showing (ii). For item (iii), suppose that $A_{ij}$ is nonzero and assume without loss of generality that $|S_i|\geq |S_j|$. Then by (i), $A_{ij}A_{ij}^*=\lambda I$ for some positive real $\lambda$, and therefore has rank $|S_i|$. This implies that $|S_j|=|S_i|$ and $\sqrt{\lambda}\cdot A_{ij}$ is unitary.

Observe that (ii) shows that $I_1+\cdots+I_k$ is the unit of $\MA$.

Define the relation $\sim$ on $\{1,\ldots,k\}$ by setting $i\sim j$ if and only if $I_i\MA I_j\neq \{0\}$. This is an equivalence relation. Indeed, $\sim$ is reflexive by (i) and symmetric since $I_i\MA I_j=(I_j\MA I_i)^*$. Transitivity follows from (iii) since $I_h\MA I_j\supseteq (I_h\MA I_i)(I_i\MA I_j)$ and the product of two unitary matrices is unitary. 

Denote by $\{E_1,\ldots,E_d\}=\{\sum_{j\sim i} I_j :  i=1,\ldots, k\}$ the zero-one diagonal matrices induced by the equivalence relation. Since the center $\mathcal{C}_{\MA}$ of $\MA$ is contained in $\MB=\C I_1+\cdots+\C I_k$, it follows by construction that $E_1,\ldots, E_d$ span the center, and are its minimal idempotents. We find the following block structure of $\MA$:
\begin{equation*}
\MA=\{0\}\oplus E_1\MA\oplus\cdots\oplus E_d\MA,
\end{equation*}
where the matrix $*$-algebras $E_i\MA$ are simple. For the rest of the proof we may assume that $\MA$ is simple ($d=1$) and that $E_0=0$. 

Since $\sim$ has only one equivalence class, for every matrix $A=(A_{ij})_{i,j=1}^k\in \MA$, the `blocks' $A_{ij}$ are square matrices of the same size. Furthermore, we can fix an $A\in \MA$ for which all the $A_{ij}$ are unitary.  For any $B\in \MA$, we have $A_{1i}B_{ij}(A_{1j})^*=(AI_iBI_jA^*)_{11}$. By (i), it follows that $\{A_{1i}B_{ij}(A_{1j})^* : B\in \MB\}=\C I$. Hence setting $U$ to be the unitary matrix $U:=\diag (A_{11},\ldots, A_{1k})$, we see that $U\MA U^*=\{\left(a_{ij}I\right)_{i,j=1}^k :  a_{i,j}\in \C\}$, which shows that $\MA$ is $*$-isomorphic to $\C^{k\times k}$.
\smartqed\qed
\end{proof}

\subsection*{Relation to group representations}

In the case that $\MA=(\C^{n\times n})^G$, where $\pi:G\to U_n(\C)$ is a unitary representation, the block diagonalization can be interpreted as follows. The diagonalization of the maximal commutative $*$-subalgebra $\MB$, gives a decomposition $\C^n=V_1\oplus\cdots\oplus V_k$ into irreducible submodules. Observe that $V_0=\{0\}$ since $\MA$ contains the identity matrix. The equivalence relation $\sim$ yields the isotypic components $\mathrm{Im} E_1,\ldots,\mathrm{Im} E_d$, where the sizes of the equivalence classes correspond to the block sizes $m_i$ in Theorem \ref{thm:blockdiagonal}. Here Schur's lemma is reflected by the fact that $I_i\MA I_j=\C I_i$ if $i\sim j$ and $\{0\}$ otherwise. 

\subsection*{Algorithmic aspects}

If a matrix $*$-algebra $\MA$ is given explicitly by a basis, then the above proof of Theorem \ref{thm:blockdiagonal} can be used to find a block diagonalization of $\MA$ computationally. Indeed, it suffices to find an inclusion-wise maximal commutative $*$-subalgebra $\MB\subseteq \MA$ and compute a common system of eigenvectors for (basis) elements of $\MB$. This can be done by standard linear algebra methods. For example finding a maximal commutative $*$-subalgebra of $\MA$ can be done by starting with $\MB=\left<A\right>$, the $*$-subalgebra generated by an arbitrary Hermitian element in $\MA$. As long as $\MB$ is not maximal, there is a Hermitian element in $\MA\setminus \MB$ that commutes with all elements in (a basis of) $\MB$, hence extending $\MB$. Such an element can be found by solving a linear system in $O(\dim \MA)$ variables and $O(\dim \MB)$ constraints. In at most $\dim\MA$ iterations, a maximal commutative $*$-subalgebra is found.

Practically more efficient is to find a ``generic'' Hermitian element $A\in \MA$. Then the matrix $*$-algebra $\MB$ generated by $A$ will be a maximal commutative $*$-subalgebra of $\MA$ and diagonalizing the matrix $A$ also diagonalizes $\MB$. Such a generic element can be found by taking a random Hermitian element from $\MA$ (with respect to the basis), see Murota et. al. \cite{MurotaKannoKojimaKojima}. If a basis for the center of $\MA$ is known a priori (or by solving a linear system in $O(\dim\MA)$ variables and equations), as an intermediate step the center could be diagonalized, followed by a block diagonalization of the simple components of $\MA$, see Dobre, de Klerk, Pasechnik \cite{DobredeKlerkPasechnik}.       

\subsection{Regular $*$-representation}\label{sub:Regularstar}

Let $\MA$ be a matrix $*$-algebra of dimension $M$ and let $C_1,\ldots,C_M$ be an orthonormal basis of $\MA$ with respect to the trace product $\prodtrace{\cdot}{\cdot}$. For fixed $A\in \MA$, left-multiplication by $A$ defines a linear map  $B\mapsto AB$ on $\MA$. With respect to the orthonormal basis $C_1,\ldots,C_M$, this linear map is represented by the matrix $L(A)\in \C^{M\times M}$ given by
\begin{equation*}
L(A)_{st} =\prodtrace{AC_t}{C_s}.
\end{equation*}
The map
\begin{equation*}
L:\MA\to \C^{M\times M} 
\end{equation*}
is an injective $*$-homomorphism called the \emph{regular $*$-representation of $\MA$}. The fact that $L$ is linear and preserves matrix products is clear. Injectivity follows from the fact that $L(A)=0$ implies $AA^*=0$ and hence $A=0$. Finally, the equations  
\begin{equation*}
L(A^*)_{st}=\prodtrace{A^*C_t}{C_s}=\prodtrace{C_t}{AC_s}=\overline{\prodtrace{AC_s}{C_t}}=\overline{L(A)_{ts}}
\end{equation*} 
show that $L(A^*)=L(A)^*$. Because $L$ is linear, it is determined by the images $L(C_1),\ldots,L(C_M)$. 

In many applications, for example in the case of a permutation action with the canonical basis $C_1, \ldots, C_M$ (Step $1\frac{1}{2}$ in the introduction), one only has an orthogonal basis which is not orthonormal. In that case, the map $L$ is given by
\begin{equation*}
L(C_r)_{st}=\frac{\prodtrace{C_rC_t}{C_s}}{||C_s|| \; ||C_t||}=\frac{||C_s||}{||C_t||} p_{rt}^s,
\end{equation*}   
where the $p_{rs}^t$ are the multiplication parameters defined by $C_rC_s=\sum_t p_{rs}^t C_t$.  If we denote $\phi(C_r) = L(C_r)$, we obtain Theorem \ref{thm:regular representation}.

\section{Invariant positive definite functions on compact spaces}
\label{Representation theory}

Until now we considered only finite dimensional invariant semidefinite
programs and the question: 
How can symmetry coming from the action of a finite group be exploited
to simplify the semidefinite program? 
In several situations, some given in Section~\ref{Spherical codes},
one wants to work with infinite dimensional invariant semidefinite
programs. 
In these situations using the symmetry coming from the action of an
infinite, continuous group is a must if one wants to do explicit
computations. 
In this section we introduce the cone of continuous, positive definite
functions on a compact set $M$, as a natural generalization of the
cone of positive semidefinite matrices. When a compact group $G$ acts
continuously on $M$, we use the representation theory of $G$ to
describe the subcone of $G$-invariant positive definite functions on
$M$. This is the infinite dimensional analog of Step 2 (second
version), given in the introduction, in the case of a permutation
representation.

In general, this method evidences interesting links between the
geometry of $M$, the representations of $G$ and the theory of
orthogonal polynomials. We will review the spaces $M$, finite and infinite, where this
method has been  completely worked out.

\subsection{Positive definite functions}
\label{Pdf}

Let $M$ be a compact space. We denote the
space of continuous functions on $M$ taking complex values by
$\CC(M)$. 

\begin{mydefinition}
We say that $F\in \CC(M^2)$ is \emph{positive definite}, $F\succeq 0$, if
$F(x,y)=\overline{F(y,x)}$ and, for all $n$, for all $n$-tuples
$(x_1,\dots,x_n)\in M^n$ and vectors $(\alpha_1,\dots,\alpha_n)\in \C^n$,
\begin{equation*}
\sum_{i,j=1}^n \alpha_i F(x_i,x_j)\overline{\alpha_j}\geq 0.
\end{equation*}
In other words, for all choices of finite subsets $\{x_1,\dots,x_n\}$ of
$M$, the matrix 
\begin{equation*}
\big(F(x_i,x_j)\big)_{1\leq i,j\leq n}
\end{equation*}
is Hermitian and positive semidefinite. The set  of continuous positive definite
functions on $M$ is denoted $\CC(M^2)_{\succeq 0}$.
\end{mydefinition}
In particular, if $M$ is a finite set with the discrete topology, the
elements of $\CC(M^2)_{\succeq 0}$ coincide with  the Hermitian
positive semidefinite matrices indexed by $M$.
In general, $\CC(M^2)_{\succeq 0}$ is  a closed convex cone in $\CC(M^2)$.

We now assume that a compact group $G$ \emph{acts continuously on
  $M$}. Here we mean that, in addition to the properties of a group
action (see introduction), the map $G\times M \to M$, with $(g,x) \mapsto gx$, is continuous. 
We also assume that $M$ is endowed with a \emph{regular Borel measure} $\mu$ which is
\emph{$G$-invariant}.

\begin{verse}
\small
\noindent We recall here the  basic notions on measures that will be needed.
If $M$ is a topological space, a \emph{Borel measure} is a measure defined on the $\sigma$-algebra
generated by the open sets of $M$. A  \emph{regular Borel measure} is
one which behaves well with respect to this topology, we refer to Rudin
\cite{Rudin} for the precise definition.

\noindent If $M$ is a finite set, the topology on $M$ is chosen to be the \emph{discrete
topology}, i.e. every subset of $M$ is an open set, and the measure $\mu$ on $M$ is the
\emph{counting measure} defined by $\mu(A)=|A|$.

\noindent On a compact group $G$, there is a regular Borel measure $\lambda$ which is {\em
  left and right invariant}, meaning that
$\lambda(gA)=\lambda(Ag)=\lambda(A)$ for all $g\in G$ and $A\subset
G$ measurable, and such that the open sets have positive measure. This
measure is unique up to a positive multiplicative constant and is
called the \emph{Haar measure on $G$} (see e.g. Conway \cite[Theorem 11.4]{Conway}). If $G$ is finite, the counting
measure on $G$ provides a  measure with these properties. 
\end{verse}

\subsection{Unitary representations of compact groups}\label{Unitary rep}

The definitions and properties of representations given in the
introduction  extend naturally to compact groups. We only have to
modify the definition of a finite dimensional representation by asking
that the homomorphism $\pi: G \to \Gl(V)$ is continuous. 

In the proofs,
integrating with the \emph{Haar measure} $\lambda$ of $G$ replaces averaging on finite
groups. For example, in the introduction, we have seen 
that every representation $V$ of a finite group $G$ is unitary. The argument was, starting from
an arbitrary
inner product $\langle u,v\rangle_0$ on $V$, to average it over all
elements of $G$, in order to transform it into a $G$-invariant inner
product. In the case of a more general compact group, we average
over $G$ using the Haar measure (normalized so that $\lambda(G)=1$), thus taking:
\begin{equation*}
\langle u,v\rangle :=\int_{G}\langle gu,gv\rangle d\lambda(g).
\end{equation*}
So Maschke's theorem holds: every finite dimensional $G$-space is the direct sum of
irreducible $G$-subspaces. In contrast to the case of finite groups,
the number of isomorphism classes of irreducible $G$-spaces is generally
infinite. We fix a set $\RR=\{R_k : k\geq 0\}$ of
representatives, assuming for simplicity that this set is countable
(it is the case e.g. if $G$ is a Lie group). Here $R_0$ is the trivial
representation, i.e. $V=\C$ with $gz=z$ for all $g\in G$, $z\in
\C$. 

However, infinite dimensional representations arise naturally in the study
of infinite compact groups. We shall be primarily concerned with one of them,
namely the representation of $G$ defined by the space $\CC(M)$ 
with the action of $G$ defined by $\pi(g)(f)(x) =f(g^{-1}x)$ (of
course it has infinite dimension only if $M$ itself is not a finite set). It is a
unitary
representation for the standard inner product on $\CC(M)$:
\begin{equation*}
\langle f_1,f_2\rangle =\frac{1}{\mu(M)}\int_M
f_1(x)\overline{f_2(x)}d\mu(x).
\end{equation*}
Moreover $\CC(M)$ is a dense subspace of the \emph{Hilbert space} 
$L^2(M)$ of measurable functions $f$ on $M$ such that $|f|^2$ is $\mu$-integrable (see
e.g. Conway \cite[Chapter 1]{Conway} or Rudin \cite[Chapter 4]{Rudin}).
It turns out that the theory of finite dimensional representations of
compact groups extends nicely to the unitary representations on Hilbert
spaces:
\begin{mytheorem}\label{Th2}
Every unitary representation of a compact group $G$ on a Hilbert space
is the direct  sum (in the sense of Hilbert spaces) of finite dimensional irreducible $G$-subspaces.
\end{mytheorem}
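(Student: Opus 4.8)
The plan is to reduce the statement to the Peter--Weyl theorem, or rather to reprove its essential content by a maximality/Zorn argument combined with a spectral analysis of a single well-chosen compact self-adjoint operator. Let me sketch the strategy.

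First I would set up the reduction. Let $\pi : G \to \mathcal{U}(\mathcal{H})$ be a unitary representation on a Hilbert space $\mathcal{H}$. Call a closed $G$-invariant subspace $W \subseteq \mathcal{H}$ \emph{good} if $W$ decomposes as a (Hilbert space) direct sum of finite dimensional irreducible $G$-subspaces. By a Zorn's lemma argument, there is a maximal good subspace $W_0$ (the closure of the span of a maximal orthogonal family of finite dimensional irreducibles is itself good, so chains have upper bounds). Since the orthogonal complement $W_0^\perp$ is again a closed $G$-invariant subspace, it suffices to prove the following key claim: \emph{every nonzero unitary representation of $G$ on a Hilbert space contains a nonzero finite dimensional $G$-invariant subspace.} Applying this to $W_0^\perp$ (if it were nonzero) would contradict maximality of $W_0$, forcing $W_0 = \mathcal{H}$.

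To prove the key claim, I would mimic the construction used to produce invariant inner products by averaging over the Haar measure, but now averaging an operator rather than a form. Fix a unit vector $e \in \mathcal{H}$ and let $P$ be the rank-one orthogonal projection onto $\C e$. Define
\begin{equation*}
T = \int_G \pi(g) P \pi(g)^* \, d\lambda(g),
\end{equation*}
the integral being taken in the weak (equivalently strong) operator topology; it exists because the integrand is uniformly bounded and $g \mapsto \pi(g)P\pi(g)^*$ is continuous in the relevant topology, using continuity of $\pi$ and compactness of $G$. Then $T$ is self-adjoint, positive, and $G$-equivariant: $\pi(h) T \pi(h)^* = T$ for all $h \in G$ by left-invariance of $\lambda$. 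Moreover $\langle Te, e\rangle = \int_G |\langle \pi(g)^* e, e\rangle|^2 \, d\lambda(g) > 0$ since the integrand is $1$ at $g = 1$ and continuous, so $T \neq 0$. The crucial point is that $T$ is a \emph{compact} operator: it is a norm limit (or weak integral with compact-valued integrand in a suitable sense) of averages of rank-one operators, and one checks compactness either by approximating the Haar integral by finite Riemann-type sums of rank-one operators — which are compact, and the space of compact operators is norm-closed — or by a Hilbert--Schmidt estimate. By the spectral theorem for compact self-adjoint operators, $T$ has a nonzero eigenvalue $\mu$ whose eigenspace $E_\mu = \ker(T - \mu I)$ is finite dimensional and nonzero. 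By $G$-equivariance of $T$, each eigenspace $E_\mu$ is $G$-invariant. This produces the desired nonzero finite dimensional $G$-invariant subspace, and then Maschke's theorem (already established above for finite dimensional $G$-spaces) decomposes it into irreducibles, completing the key claim and hence the theorem.

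The main obstacle I expect is the rigorous justification that $T$ is compact — more precisely, making sense of the operator-valued Haar integral and verifying that compactness is preserved in the limit. The clean way is to observe that for any $x \in \mathcal{H}$ one has $Tx = \int_G \langle x, \pi(g)e\rangle\, \pi(g)e \, d\lambda(g)$, exhibit $T$ as an integral operator with a continuous kernel on $G$ after transporting the problem to $L^2(G,\lambda)$ via the map $x \mapsto (g \mapsto \langle x, \pi(g) e\rangle)$, and invoke the standard fact that such kernel operators are Hilbert--Schmidt, hence compact. Everything else — self-adjointness, positivity, equivariance, nonvanishing, the Zorn argument, and the final appeal to Maschke — is routine once this compactness is in hand.
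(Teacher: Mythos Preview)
Your sketch is correct and is essentially the standard proof of (this consequence of) the Peter--Weyl theorem: Zorn's lemma reduces the question to producing one nonzero finite dimensional invariant subspace, and averaging a rank-one projection over the Haar measure yields a nonzero compact self-adjoint $G$-equivariant operator whose nonzero eigenspaces do the job. The compactness point you flag is indeed the only place requiring care, and the Hilbert--Schmidt argument you indicate is the usual clean way to handle it.

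The paper, however, does not give a proof of Theorem~\ref{Th2} at all: it simply records that the statement is a consequence of the Peter--Weyl theorem and refers the reader to Bump~\cite{Bump}. So there is no detailed argument to compare against. Your proposal and the paper agree on the route---both invoke Peter--Weyl---but you have unpacked the black box and sketched the proof of Peter--Weyl itself, whereas the paper is content to cite it.
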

The above theorem is in fact a consequence of the celebrated \emph{Theorem
of Peter and Weyl} (see e.g. Bump \cite{Bump}), which states that the matrix coefficients of finite
dimensional irreducible representations of a compact group $G$ span a
subspace of $\CC(G)$ which is dense for the topology of uniform
convergence.

\subsection{$G$-invariant positive definite functions on
  $M$}\label{G-invariant pdf}

Now we come to our main concern, which is to describe the elements of
$\CC(M^2)^G_{\succeq 0}$, i.e. the $G$-invariant positive definite
functions, in terms of the structure of the $G$-space $M$. In order to avoid technicalities with
  convergence issues, we shall systematically work in finite
  dimensional subspaces of $\CC(M)$. So let $V$ be a $G$-subspace of
  $\CC(M)$ of finite dimension. Let $V^{(2)}$ denote the subspace  of $\CC(M^2)$ spanned by elements of the form
$f_1(x)\overline{f_2(y)}$, where $f_1$ and $f_2$ belong to $V$.

We take the following notations for an irreducible decomposition of
$V$: 
\begin{equation}\label{decomp}
V=\bigoplus_{k\in I_V} \bigoplus_{i=1}^{m_k} H_{k,i},
\end{equation}
where for $1\leq i\leq m_k$, the subspaces $H_{k,i}$ are pairwise
orthogonal and $G$-iso\-mor\-phic to $R_k$, and where $I_V$ is the finite set of indices $k\geq 0$ such that
the multiplicities $m_k$ are not equal to $0$. 
For all $k$, $i$, we choose an
orthonormal basis $(e_{k,i,1},\dots, e_{k,i,d_k})$ of $H_{k,i}$, where
$d_k=\dim(R_k)$, such that  the
complex numbers $\langle ge_{k,i,s},e_{k,i,t}\rangle$ do not depend on
  $i$ (such a basis exists precisely because the $G$-isomorphism class of
  $H_{k,i}$ does not depend on $i$).
From this data, we introduce the  $m_k\times m_k$ matrices $E_k(x,y)$
with coefficients $E_{k,ij}(x,y)$:
\begin{equation}\label{Ek}
E_{k,ij}(x,y) =\sum_{s=1}^{d_k} e_{k,i,s}(x)\overline{e_{k,j,s}(y)}.
\end{equation}
Then $V^{(2)}$ obviously contains the elements
$e_{k,i,s}(x)\overline{e_{k,j,s}(y)}$,  which moreover form an orthonormal
basis  of this space.

\begin{mytheorem}\label{Th3}
Let $F\in V^{(2)}$. Then  $F$ is a $G$-invariant positive definite
function
if and only if there exist Hermitian positive semidefinite matrices
$F_k$ such that
\begin{equation}\label{pdf}
F(x,y)=\sum_{k\in I_V} \langle F_k, \overline{E_k(x,y)}\rangle.
\end{equation}
\end{mytheorem}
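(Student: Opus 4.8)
\medskip
\noindent\textbf{Proof sketch.}
The plan is to reduce the statement to a computation of the orthogonal projection of $\CC(M^2)$ onto its $G$-invariant subspace, using the orthonormal basis of $V^{(2)}$ coming from the chosen bases of $V$ together with the Schur orthogonality relations. First I would fix coordinates. The products $e_{k,i,s}(x)\overline{e_{l,j,t}(y)}$ (with $k,l\in I_V$, $1\le i\le m_k$, $1\le j\le m_l$, $1\le s\le d_k$, $1\le t\le d_l$) form an orthonormal basis of $V^{(2)}$, so every $F\in V^{(2)}$ has a unique expansion
\begin{equation*}
F(x,y)=\sum_{k,l\in I_V}\ \sum_{i,j,s,t} c_{(k,i,s),(l,j,t)}\, e_{k,i,s}(x)\,\overline{e_{l,j,t}(y)}.
\end{equation*}
The one elementary fact I will use twice is that, since the $e_{k,i,s}$ are linearly independent functions on $M$, finite linear combinations of the evaluation vectors $(e_{k,i,s}(x))_{k,i,s}\in\C^{\dim V}$, $x\in M$, span all of $\C^{\dim V}$; otherwise a nonzero element of $V$ would vanish identically on $M$.

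Next I would compute the invariant projection. Let $G$ act on $\CC(M^2)$ by $(gF)(x,y)=F(g^{-1}x,g^{-1}y)$; this preserves the finite-dimensional subspace $V^{(2)}$, and the averaging $P(F)=\int_G gF\,d\lambda(g)$, i.e.\ $P(F)(x,y)=\int_G F(gx,gy)\,d\lambda(g)$ with $\lambda$ the normalized Haar measure, is the orthogonal projection onto the $G$-invariant functions, so $F$ is $G$-invariant if and only if $F=P(F)$. Since the bases were chosen so that $\langle g e_{k,i,s},e_{k,i,t}\rangle$ does not depend on $i$, one has $e_{k,i,s}(gx)=\sum_u M^k_{us}(g)\,e_{k,i,u}(x)$ for a unitary matrix $M^k(g)$ that does not depend on $i$ and is assembled from the matrix coefficients of $R_k$. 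Inserting this into $P$ and applying the Schur orthogonality relations
\begin{equation*}
\int_G M^k_{us}(g)\,\overline{M^l_{vt}(g)}\,d\lambda(g)=\frac{1}{d_k}\,\delta_{kl}\,\delta_{uv}\,\delta_{st}
\end{equation*}
(a standard consequence of the Peter--Weyl theorem, cf.\ Theorem~\ref{Th2}, as the $R_k$ are finite-dimensional irreducible unitary representations) shows that $P$ maps $e_{k,i,s}(x)\overline{e_{l,j,t}(y)}$ to $d_k^{-1}\delta_{kl}\delta_{st}\,E_{k,ij}(x,y)$, with $E_{k,ij}$ as in \eqref{Ek}. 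Hence $F=P(F)$ is equivalent to
\begin{equation*}
F(x,y)=\sum_{k\in I_V}\ \sum_{i,j=1}^{m_k}(F_k)_{ij}\,E_{k,ij}(x,y)=\sum_{k\in I_V}\langle F_k,\overline{E_k(x,y)}\rangle,
\end{equation*}
with $(F_k)_{ij}:=d_k^{-1}\sum_s c_{(k,i,s),(k,j,s)}$; conversely, every $F$ of this form is $G$-invariant, because $E_k(gx,gy)=E_k(x,y)$, which follows from $M^k(g)M^k(g)^*=I$.

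It remains to single out, among $F$ of this form, the Hermitian-symmetric and the positive definite ones. From \eqref{Ek} one has $E_{k,ij}(x,y)=\overline{E_{k,ji}(y,x)}$, so $F(x,y)=\overline{F(y,x)}$ holds if and only if each $F_k$ is Hermitian. For the positivity condition, given $x_1,\dots,x_n\in M$ and $\alpha\in\C^n$, put $\xi^{(k,s)}_i:=\sum_a\alpha_a e_{k,i,s}(x_a)$; then
\begin{equation*}
\sum_{a,b=1}^n\alpha_a\, F(x_a,x_b)\,\overline{\alpha_b}=\sum_{k\in I_V}\ \sum_{s=1}^{d_k}\ \sum_{i,j=1}^{m_k}\xi^{(k,s)}_i\,(F_k)_{ij}\,\overline{\xi^{(k,s)}_j}.
\end{equation*}
If every $F_k\succeq0$ the right-hand side is a sum of nonnegative terms, so $F\succeq0$. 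Conversely, if $F\succeq0$, then given any $k_0$, $s_0$ and any $w\in\C^{m_{k_0}}$, the spanning fact from the first step lets me choose points $x_a$ and coefficients $\alpha_a$ with $\xi^{(k_0,s_0)}=w$ and $\xi^{(k,s)}=0$ for every other $(k,s)$; the displayed quantity then equals $\sum_{i,j}w_i(F_{k_0})_{ij}\overline{w_j}$, which must be $\ge0$, so $F_{k_0}\succeq0$. Combining this with the previous paragraph proves Theorem~\ref{Th3}.

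The main obstacle is the middle step: carrying out the Schur-orthogonality computation while correctly keeping track of the inverse and the complex conjugate in the action $f\mapsto f(g^{-1}\cdot)$, and genuinely using the ``adapted basis'' hypothesis---without it the matrices $M^k(g)$ would depend on $i$ and $P$ would not close up into the block form above; everything else is routine linear algebra. A more structural variant would identify $F\in V^{(2)}$ with an endomorphism $T$ of $V$, observe that $G$-invariance of $F$ amounts to $T\in\End^G(V)$, and then invoke Theorem~\ref{thm:blockdiagonal} together with Schur's lemma to obtain the matrix $*$-algebra isomorphism $\End^G(V)\cong\bigoplus_{k\in I_V}\C^{m_k\times m_k}$, under which Hermitian (resp.\ positive semidefinite) $T$ correspond to tuples of Hermitian (resp.\ positive semidefinite) matrices $F_k$; unwinding this isomorphism in the adapted basis recovers \eqref{pdf}. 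I would present the first, self-contained version.
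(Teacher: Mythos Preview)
Your proposal is correct and follows the same overall architecture as the paper's proof: expand $F$ in the orthonormal basis $e_{k,i,s}(x)\overline{e_{l,j,t}(y)}$, use the Schur orthogonality relations to show that $G$-invariance forces the block form \eqref{pdf}, and then argue that positive definiteness of $F$ is equivalent to $F_k\succeq 0$ for each $k$.

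The one genuine difference is in how you extract $F_k\succeq 0$ from $F\succeq 0$. The paper does this by \emph{integration}: it takes the test function $\alpha(x)=\sum_i \alpha_i\,\overline{e_{k,i,s}(x)}$ and computes
\[
\sum_{i,j}\alpha_i (F_k)_{ij}\overline{\alpha_j}
=\frac{1}{\mu(M)^2}\int_{M^2}\alpha(x)F(x,y)\overline{\alpha(y)}\,d\mu(x,y)\ge 0,
\]
the vanishing of all cross terms coming for free from orthonormality of the $e_{k,i,s}$. You instead stay with the pointwise definition of positive definiteness and use the spanning property of evaluation vectors to manufacture finite data $(x_a,\alpha_a)$ that isolate a single block $(k_0,s_0)$. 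Both arguments are valid; the paper's is shorter once one accepts that positive definiteness of a continuous kernel implies nonnegativity of such integrals (a Riemann-sum/approximation fact on compact $M$), while yours avoids that step entirely and works straight from the definition. Your forward direction ($F_k\succeq 0\Rightarrow F\succeq 0$) is also spelled out more explicitly than in the paper.
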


Before we give a sketch of the proof, several comments are in order:
\begin{enumerate}
\item If $M = [n]$, then we recover the parametrization of
  $G$-invariant positive semidefinite matrices given in
  \eqref{eq:parameterization}.
\item The main point of the above theorem is
  to replace the property of a continuous function being positive
  definite with the property that the $m_k\times m_k$ matrices $F_k$ are
  positive semidefinite. In the introduction, we have already seen 
in the finite case $M=[n]$ that it allows  to reduce  the size of 
invariant semidefinite
  programs (following Step 2 (second version) of the introduction). In
  the general case, this theorem allows to
replace  a conic linear program
  involving the cone $\CC(M^2)_{\succeq 0}$ with  standard
  semidefinite programs (see Section \ref{Spherical codes} for examples where $M$ is
  the unit sphere of Euclidean space).
\item One might wonder to what extent the above matrices
  $E_k(x,y)$ depend on the choices involved in their
  definition. Indeed, one can prove that a different choice of
  orthonormal basis in $H_{k,i}$ does not affect $E_k(x,y)$. One could
  also start from another decomposition of the isotypic component
  $\MI_k$ of $V$. The new decomposition does not need to be orthogonal; the matrix $E_k(x,y)$ would then  change to
  $AE_k(x,y)A^*$ for some $A\in \Gl_{m_k}(\C)$. So, up to an action
  of $\Gl_{m_k}(\C)$, the matrix
$E_k(x,y)$ is canonically associated to $\MI_k$.
\item The statement implies that the matrices $E_k(x,y)$ themselves are
  $G$-invariant. In other words, they can be expressed in terms of the
  orbits
$O(x,y)$ of the action of $G$ on $M^2$ in the form:
\begin{equation}\label{Yk}
E_k(x,y)=Y_k(O(x,y))
\end{equation}
for some matrices $Y_k$. It is indeed the expression we are
  seeking for.

\end{enumerate}

\begin{proof} $F(x,y)$ is a linear
  combination of the  $e_{k,i,s}(x)\overline{e_{l,j,t}(y)}$ since these elements
  form an orthonormal basis of $V^{(2)}$. In
  a first step, one shows that the $G$-invariance property $F(gx,gy)=F(x,y)$ results
  in an expression for $F$ of the form \eqref{pdf} for some matrices
  $F_k=(f_{k,ij})$; the proof involves the \emph{orthogonality
  relations}  between matrix coefficients of   
irreducible representations of $G$ (see e.g. Bump \cite{Bump}).
In a second step, $F\succeq 0$ is shown to be equivalent to
$F_k\succeq 0$ for all $k\in I_V$. Indeed, 
for $\alpha(x) =\sum_{i=1}^{m_k} \alpha_i \overline{e_{k,i,s}(x)}$, 
we have 
\begin{equation*}
\sum_{i,j=1}^{m_k} \alpha_i f_{k,ij} \overline{\alpha_j}=\frac{1}{\mu(M)^2}\int_{M^2}
\alpha(x)F(x,y)\overline{\alpha(y)}d\mu(x,y)\geq 0.
\end{equation*}
\smartqed\qed
\end{proof}

\begin{myremark}
\begin{enumerate}
\item A straightforward generalization of the main structure theorem for matrix $*$-algebras to the $*$-algebra $\End^G(V)$ shows  that 
\begin{equation}\label{isom}
\End^G(V)\simeq (V^{(2)})^G\simeq \bigoplus_{k\in I_V} \C^{m_k\times m_k}.
\end{equation}
An isomorphism from $(V^{(2)})^G$ to the direct
sum of matrix algebras is constructed in the proof of Theorem \ref{Th3}, in an explicit way, from the
decomposition \eqref{decomp}. This isomorphism  is given by the map
sending $(F_k)_{k\in I_V}\in \oplus_{k\in I_V} \C^{m_k\times m_k}$
  to $F\in (V^{(2)})^G$ defined by:
\begin{equation*}
F(x,y)=\sum_{k\in I_V} \langle F_k,\overline{ E_k(x,y)}\rangle
\end{equation*}
thus completing Step 2 (second version) of the introduction
for compact groups.

\item Theorem \ref{Th3} shows, jointly with Theorem
  \ref{Th2}, that any element of the cone $\CC(M^2)_{\succeq 0}^G$ is a sum
  (in the sense of $L^2$ convergence) of the form \eqref{pdf} with
  possibly infinitely many terms. Moreover, under the additional
  assumption that $G$ acts transitively on $M$, Bochner \cite{Bochner} proves that
  the convergence holds in the stronger sense of \emph{uniform
    convergence}, i.e. for the supremum  norm.
\end{enumerate}
\end{myremark}

\subsection{Examples: the commutative case}\label{Ex: commutative}
We mean here the case when $\End^G(\CC(M))$ is a commutative algebra. From
Theorem \ref{Th3}, this condition is equivalent to the commutativity
of $\End^G(V)$ for all finite dimensional $G$-subspace of
$\CC(M)$. So, from the
isomorphism \eqref{isom}, and since a matrix algebra $\C^{m\times m}$ is
commutative if and only if $m=0$ or $m=1$,  the commutative case
corresponds  to non vanishing 
multiplicities equal to $1$ in the decomposition of $\CC(M)$. The
$G$-space $\CC(M)$ is said to be multiplicity-free. Then the matrices $F_k$ in \eqref{pdf}
have only one coefficient $f_k$ and $F\in \CC(M^2)_{\succeq 0}^G$ is of
the form
\begin{equation}\label{pdf sym}
F(x,y)=\sum_{k\geq 0} f_k E_k(x,y) ,\quad f_k\geq 0.
\end{equation}
We say that $M$ is \emph{$G$-symmetric}, if $G$ acts \emph{transitively on
$M$} (i.e. there is only one orbit in $M$; equivalently $M$ is said to
be a \emph{homogeneous space}), and
if moreover, for all $(x,y)\in M^2$, there exists $g\in G$ such that
$gx=y$ and $gy=x$. In other words $(y,x)$ belongs to the $G$-orbit
$O(x,y)$ of the pair $(x,y)$.  This nonstandard terminology (sometimes it is also called a \emph{generously transitive} group action) covers the case of
\emph{compact symmetric Riemannian manifolds} (see e.g. Berger \cite{Berger}) as well as a large number of
finite spaces. We provide many examples of such spaces below. The
functions $Y_k$ \eqref{Yk}
are often referred to as the \emph{spherical functions} or
\emph{zonal spherical functions} of the homogeneous space $M$ (see e.g. Vilenkin, Klimyk \cite{VilenkinKlimyk}).

\begin{mylemma} If $M$ is $G$-symmetric then for all $V\subset \CC(M)$,
  $(V^{(2)})^G$ is commutative.
\end{mylemma}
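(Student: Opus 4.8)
The plan is to exhibit a linear anti-automorphism of the algebra $(V^{(2)})^G$ that fixes every one of its elements. Once this is available, commutativity is immediate: if $\top$ denotes such a map, then for $F,F'\in(V^{(2)})^G$ we get $F\ast F'=(F\ast F')^\top=(F')^\top\ast F^\top=F'\ast F$. This is the kernel-theoretic incarnation of the classical argument that a generously transitive action yields a Gelfand pair; the entire role of the hypothesis ``$M$ is $G$-symmetric'' is to make such a $\top$ available.

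First I would make the algebra structure on $(V^{(2)})^G$ explicit. The product for which \eqref{isom} is an algebra isomorphism --- the one transported from composition in $\End^G(V)$ under the natural identification $V^{(2)}\simeq\End(V)$ --- is the kernel product
\begin{equation*}
(F\ast F')(x,z)=\frac{1}{\mu(M)}\int_M F(x,y)\,F'(y,z)\,d\mu(y).
\end{equation*}
Writing $F(x,y)=\sum_i f_i(x)\overline{g_i(y)}$ and $F'(x,y)=\sum_j f'_j(x)\overline{g'_j(y)}$ with $f_i,g_i,f'_j,g'_j\in V$, one computes $(F\ast F')(x,z)=\sum_{i,j}\langle f'_j,g_i\rangle\,f_i(x)\overline{g'_j(z)}$, so $V^{(2)}$ is closed under $\ast$; and the change of variables $y\mapsto gy$ together with the $G$-invariance of $\mu$ gives $(F\ast F')(gx,gz)=(F\ast F')(x,z)$ whenever $F$ and $F'$ are $G$-invariant, so $(V^{(2)})^G$ is closed under $\ast$.

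Next I would introduce $F^\top(x,y):=F(y,x)$ and check two facts. The first is purely formal: from the definition of $\ast$ one reads off $(F\ast F')^\top=(F')^\top\ast F^\top$ for arbitrary kernels $F,F'$, with no reference to $G$. The second is where the hypothesis enters: if $F\in\CC(M^2)^G$ and $M$ is $G$-symmetric, then $F^\top=F$. Indeed, given $(x,y)\in M^2$, choose $g\in G$ with $gx=y$ and $gy=x$; then $F^\top(x,y)=F(y,x)=F(gx,gy)=F(x,y)$ by $G$-invariance. In particular $\top$ carries $(V^{(2)})^G$ into itself, acting there as the identity. Combining: for $F,F'\in(V^{(2)})^G$ the product $F\ast F'$ again lies in $(V^{(2)})^G$, hence is $\top$-fixed, so $F\ast F'=(F\ast F')^\top=(F')^\top\ast F^\top=F'\ast F$, using $\top$-fixedness of $F$ and of $F'$. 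Thus $(V^{(2)})^G$ is commutative.

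I expect no genuine obstacle here; the only point that needs care is the bookkeeping in the first step --- identifying the correct algebra structure on $(V^{(2)})^G$ (the one pulled back from $\End^G(V)$, equivalently the kernel product above) and confirming that it restricts to the $G$-invariant kernels. After that, both the anti-homomorphism identity and the use of $G$-symmetry are one-line verifications.
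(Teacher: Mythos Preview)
Your proposal is correct and follows the same approach as the paper: identify the product on $(V^{(2)})^G$ as the convolution of kernels, use $G$-symmetry to get $F(x,y)=F(y,x)$ for every $G$-invariant $F$, and deduce $F\ast F'=F'\ast F$. The paper states the last step as ``a straightforward computation''; your anti-automorphism framing via $F^\top(x,y):=F(y,x)$ and the identity $(F\ast F')^\top=(F')^\top\ast F^\top$ is exactly the clean way to carry it out.
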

\begin{proof} Since $F\in (V^{(2)})^G$ is $G$-invariant and $M$ is $G$-symmetric, 
$F(x,y)=F(y,x)$. The multiplication on the algebra $(V^{(2)})^G\simeq \End^G(M)$
  is the convolution $F*F'$ of functions, given by:
\begin{equation*}
(F*F')(x,y)=\int_M F(x,z)F'(z,y)d\mu(z).
\end{equation*}
A straightforward computation shows that $F*F'=F'*F$.
\smartqed\qed
\end{proof}
An important subclass of symmetric spaces is provided by the 
\emph{two-point homogeneous spaces}. These spaces are metric spaces, with
distance
$d(x,y)$ invariant by $G$, and moreover satisfy that two pairs of
elements $(x,y)$ and $(x',y')$ belong to the same $G$-orbit if and
only if $d(x,y)=d(x',y')$. They are obviously $G$-symmetric since
$d(x,y)=d(y,x)$.
In other words, the distance function parametrizes the orbits of $G$
acting on pairs, thus the expression \eqref{pdf sym} of positive
definite functions specializes to
\begin{equation}\label{pdf 2-pt hom}
F(x,y)=\sum_{k\geq 0} f_k Y_k(d(x,y)) ,\quad f_k\geq 0,
\end{equation}
for some functions $Y_k$ in one variable. The functions $Y_k$ are explicitly
known for many spaces, and are usually given by a certain family of
\emph{orthogonal polynomials} (see e.g. Szeg\"o \cite{Szego}, Andrews,
Askey, Roy \cite{AndrewsAskeyRoy}). The orthogonality property arises
because the associated irreducible subspaces are pairwise orthogonal.

The complete list of
real, compact, connected two-point homogeneous spaces  was established by Wang \cite{Wang}: the unit sphere of
Euclidean space $S^{n-1}\subset \R^n$, which is treated in Section~\ref{Spherical codes}, the
projective spaces over the fields of real, complex and quaternion
numbers (denoted $P^{n-1}(K)$, with respectively $K=\R$, $\C$, $\HH$), and
the projective plane over the octonions $P^2(\OO)$, are the only spaces having these properties. In each of
these cases the functions $Y_k$ are given by \emph{Jacobi
  polynomials} (see Table~1). 
These polynomials depend on two parameters $(\alpha,\beta)$ and are
orthogonal for the measure $(1-t)^{\alpha}(1+t)^{\beta}dt$ on the
interval $[-1,1]$. We denote by $P_k^{(\alpha,\beta)}(t)$ the Jacobi polynomial of degree $k$,
normalized by the condition $P_k^{(\alpha,\beta)}(1)=1$. When
$\alpha=\beta=\lambda-1/2$, these polynomials are equal (up to a nonnegative
multiplicative factor) to the \emph{Gegenbauer polynomials
  $C_k^{\lambda}(t)$}. 

\begin{table}\label{Table 1}
\begin{center}
\begin{tabular}{llcc}
Space & Group & $\alpha$ & $\beta$\\
\hline
 $S^{n-1}$ & $O_n(\R)$ &$(n-3)/2$ & $(n-3)/2$ \\
 $P^{n-1}(\R)$ & $O_n(\R)$ & $(n-3)/2$ & $-1/2$ \\
 $P^{n-1}(\C)$ & $U_n(\C)$ & $n-2$ & $0$ \\
 $P^{n-1}(\HH)$ & $U_n(\HH)$ & $2n-3$ & $1$\\
 $P^{2}(\OO)$ & $F_4(\R)$ & $7$ & $3$ \\
\hline
\end{tabular}
\end{center}
\caption{The real compact two-point homogeneous spaces, their groups and their
 spherical functions $Y_k$.}
\end{table}

The finite two-point homogeneous spaces are not completely classified,
but several of them are relevant spaces for coding theory. The most
prominent one is the Hamming space  $\q^n$,  associated to the Krawtchouk
polynomials, which is discussed in  Section~\ref{Block
  codes}. It is the space of $n$-tuples, over
a finite set $\q$ of $q$ elements. An element of the Hamming space
$\q^n$ is  usually called a word, and $\q$ is called the
alphabet. The closely related 
\emph{Johnson spaces} are the subsets of binary
words of fixed length and  weight. The Hamming and Johnson spaces are
endowed with the \emph{Hamming distance}, counting the number of
coordinates
where to words disagree.  The Johnson spaces have $q$-analogues, the
 \emph{$q$-Johnson spaces}, the subsets of linear subspaces of
$\F_q^n$ of fixed dimension. The distance between two subspaces of the
same dimension is measured by the difference between this dimension
and the dimension of their intersection. Other spaces are related to finite
geometries and to spaces of matrices. In the later case, the distance
between two matrices will be the rank of the difference of the matrices. 

The spherical functions have been worked out, initially
in the framework of association schemes with the work of
Delsarte \cite{Delsarte1}, and later  in relation with harmonic
analysis of finite classical groups with the work of Delsarte, Dunkl,
Stanton and others. We refer to Conway, Sloane
\cite[Chapter 9]{ConwaySloane}
and to Stanton \cite{Stanton} for  surveys on
these topics. Table~2 displays  the most important
families of these spaces, together with their groups of isometries and
the family of orthogonal polynomials to which their spherical
functions belong; see Stanton \cite{Stanton} for
additional  information.

\begin{table}\label{Table 2}
\begin{center}
\begin{tabular}{llll}
Space  & Group & Polynomial & Reference \\
\hline
Hamming space $\q^n$ & $S_q\wr S_n$ & Krawtchouk &\cite{Delsarte1}\\
Johnson space    & $S_n$ &  Hahn & \cite{Delsarte1},\cite{Delsarte2}\\
$q$-Johnson space  & $\Gl_n(\F_q)$&   $q$-Hahn & \cite{Delsarte2} \\
\multicolumn{4}{l}{\em Maximal totally isotropic subspaces of dimension $k$, for
  a nonsingular bilinear form: }\\
Symmetric     & $\operatorname{SO}_{2k+1}(\F_q)$ & $q$-Krawtchouk & \cite{Stanton}\\
  & $\operatorname{SO}_{2k}(\F_q)$ & $q$-Krawtchouk & \cite{Stanton}\\
 & $\operatorname{SO}_{2k+2}^-(\F_q)$ & $q$-Krawtchouk &\cite{Stanton}\\
 Symplectic    & $\operatorname{Sp}_{2k}(\F_q)$ & $q$-Krawtchouk & \cite{Stanton}\\
 Hermitian    & $\operatorname{SU}_{2k}(\F_{q^2})$ & $q$-Krawtchouk & \cite{Stanton}\\
  & $\operatorname{SU}_{2k+1}(\F_{q^2})$ & $q$-Krawtchouk & \cite{Stanton}\\
\multicolumn{4}{l}{\em Spaces of matrices: } \\
$\F_q^{k\times n} $ & $\F_q^{k\times  n}. (\Gl_k(\F_q)\times\Gl_n(\F_q))$ & Affine $q$-Krawtchouk &\cite{Delsarte3}, \cite{Stanton}\\
$\operatorname{Skew}_n(\F_q)$ skew-symmetric &$\operatorname{Skew}_n(\F_q).\Gl_n(\F_q)$ &Affine
$q$-Krawtchouk&\cite{DelsarteGoethals}, \cite{Stanton}\\
$\operatorname{Herm}_n(\F_{q^2})$ Hermitian   &$\operatorname{Herm}_n(\F_{q^2}).\Gl_n(\F_{q^2})$ &Affine $q$-Krawtchouk&\cite{Stanton}\\
\hline
\end{tabular}
\end{center}
\caption{Some finite two-point homogeneous spaces, their groups  and their
spherical  functions $Y_k$.}
\end{table}

Symmetric spaces which are not two-point homogeneous give rise to
functions $Y_k$ depending on several variables. A typical example is provided
by the real Grassmann spaces, the spaces of $m$-dimensional subspaces of
$\R^n$, $m\leq n/2$, under the action of the orthogonal group
$O(\R^n)$. Then, the functions $Y_k$ are multivariate Jacobi
polynomials, with $m$ variables representing the principal
angles between two subspaces (see James, Constantine \cite{JamesConstantine}). A similar situation occurs in coding
theory when nonhomogeneous alphabets are considered, involving
multivariate Krawtchouk polynomials. Table 3 shows some examples of
these spaces with references to coding theory applications. 

\begin{table}\label{Table 3}
\begin{center}
\begin{tabular}{llll}
Space & Group & Polynomial & Reference \\
\hline
Nonbinary Johnson &$S_{q-1}\wr S_n$ &Eberlein/Krawtchouk& \cite{TarnanenAaltonenGoethals}\\
Permutation group &$S_n^2$&Characters of $S_n$ & \cite{Tarnanen}\\
Lee space &$D_q\wr S_n$ &Multivariate Krawtchouk& \cite{Astola}\\
Ordered Hamming space &$(\F_q^k.B_k)\wr S_n$ &Multivariate
Krawtchouck&\cite{MartinStinson}, \cite{Barg}\\
Real Grassmann space & $O_n(\R)$ & Multivariate Jacobi &
\cite{JamesConstantine}, \cite{Bachoc1}\\
Complex Grassmann space & $U_n(\C)$ & Multivariate Jacobi &
\cite{JamesConstantine}, \cite{Roy1}, \cite{Roy2}\\
Unitary group  & $U_n(\C)^2$ & Schur & \cite{CreignouDiet}\\
\hline
\end{tabular}
\end{center}
\caption{Some symmetric spaces, their groups and their spherical
 functions $Y_k$.}
\end{table}

\subsection{Examples: the noncommutative case}\label{Ex: non commutative} 

Only a few cases have
been completely worked out. Among them, the Euclidean sphere $S^{n-1}$
under the action of the subgroup of the orthogonal group fixing one
point is treated in Bachoc, Vallentin \cite{BachocVallentin1}. We come back to this case in Section~\ref{Spherical codes}. These results have been extended to
the case of fixing many points in Musin \cite{Musin}. The case of the binary Hamming space
under the action of the symmetric group is treated in the framework of
group representations in Vallentin \cite{Vallentin2}, shedding a new light on the results
of Schrijver \cite{Schrijver2}. A very similar case is given by the set of linear
subspaces
of the finite vector space $\F_q^n$, treated as a $q$-analogue of the
binary Hamming space in Bachoc, Vallentin \cite{BachocVallentin3}.

\section{Block codes}
\label{Block codes}

In this section, we give an overview of symmetry reduction in semidefinite programming bounds for (error correcting) codes. 
We fix an alphabet $\q=\{0,1,\ldots,q-1\}$ for some integer $q\geq 2$. The \emph{Hamming space} $\q^n$ is equipped with a metric $d(\cdot,\cdot)$ called the \emph{Hamming distance} which is given by 
\begin{equation*}
d(u,v) =|\{i :  u_i\neq v_i\}|\text{ for all $u,v\in \q^n$.}
\end{equation*}
The isometry group of the Hamming space will be denoted $\Aut(q,n)$ and is a wreath product $S_q\wr S_n$ of symmetric groups. That is, the isometries are obtained by taking a permutation of the $n$ positions followed by independently permuting the symbols $0,\ldots,q-1$ at each of the $n$ positions. The group acts transitively on $\q^n$ and the orbits of pairs are given by the Hamming distance. 

A subset $C\subseteq \q^n$ is called a \emph{code} of length $n$. For a (nonempty) code $C$, $\min\{d(u,v) : u,v\in C \text{ distinct}\}$ is the \emph{minimum distance} of $C$. An important quantity in coding theory is the maximum size $A_q(n,d)$ of a code of length $n$ and minimum distance at least $d$:
\begin{equation*}
A_q(n,d) =\max\{|C| :  C\subseteq \q^n \text{ has minimum distance at least $d$}\}.
\end{equation*}
These codes are often called \emph{block codes} since they can be used to encode messages into fixed-length blocks of words from a code $C$. The most studied case is that of \emph{binary codes}, that is $q=2$ and $\q=\{0,1\}$. In this case $\q$ is suppressed from the notation. An excellent reference work on coding theory is MacWilliams, Sloane \cite{MacWilliamsSloane}.

Lower bounds for $A_q(n,d)$ are mostly obtained using explicit construction of codes. Our focus here is on upper bounds obtained using semidefinite programming.

\subsection{Lov\'asz' $\vartheta$ and Delsarte's linear programming bound}
Let $\Gamma_q(n,d)$ be the graph on vertex set $\q^n$, connecting two words $u,v\in \q^n$ if $d(u,v)<d$. Then the codes of minimum distance at most $d$ correspond exactly to the independent sets in $\Gamma_q(n,d)$ and hence 
\begin{equation*}
A_q(n,d)=\alpha(\Gamma_q(n,d)),
\end{equation*}
where $\alpha$ denotes the independence number of a graph. The graph parameter $\vartheta'$, defined in Schrijver \cite{Schrijver2}, is a slight modification of the Lov\'asz theta number discussed in more detail in Section \ref{Spherical codes}. It can be defined by the following semidefinite program
\begin{equation*}
\vartheta'(\Gamma) =\max\{\prodtrace{X}{J} : \prodtrace{X}{I}=1, X_{uv}=0 \text{ if $uv\in E(\Gamma)$}, X\geq 0, X\succeq 0\},
\end{equation*} 
and it is a by-now classical lemma of Lov\'asz \cite{Lovasz} that for any graph $\Gamma$ the inequality $\alpha(\Gamma)\leq \vartheta'(\Gamma)\leq \vartheta(\Gamma)$ holds.

Hence $\vartheta'(\Gamma_q(n,d))$ is an (exponential size) semidefinite programming upper bound for $A_q(n,d)$. Using symmetry reduction, this program can be solved efficiently.  For this, observe that the SDP is invariant under the action of $\Aut(q,n)$. Hence we may restrict $X$ to belong to the matrix $*$-algebra $\MA$ of invariant matrices (the Bose-Mesner algebra of the Hamming scheme). The zero-one matrices $A_0,\ldots, A_n$ corresponding to the orbits of pairs:
\begin{equation*}
(A_i)_{u,v}=\begin{cases}1&\text{if $d(u,v)=i$},\\0&\text{otherwise},\end{cases}
\end{equation*}
form the canonical basis of $\MA$. Writing $X =\frac{1}{q^n}(x_0A_0+\cdots+x_nA_n)$, we obtain an SDP in $n+1$ variables:
\begin{align*}
\vartheta'(\Gamma_q(n,d))=\max\Big\{\sum_{i=0}^nx_i\tbinom{n}{i} :  x_0=1, &x_1=\cdots=x_{d-1}=0,\nonumber\\
&x_d,\ldots,x_n\geq 0,\  \sum_{i=0}^nx_iA_i\succeq 0\Big\}.
\end{align*}
The second step is to block diagonalize the algebra $\MA$. In this case $\MA$ is commutative, which means that the $A_i$ can be simultaneously diagonalized, reducing the positive semidefinite constraint to the linear constraints
\begin{equation*}
\sum_{i=0}^n x_iP_i(j)\geq 0\text{ for $j=0,\ldots,n$},
\end{equation*}  
in the eigenvalues $P_i(j)$ of the matrices $A_i$. The $P_i$ are the \emph{Krawtchouk polynomials} and are given by
\begin{equation*}
P_i(x)=\sum_{k=0}^i (-1)^k\tbinom{x}{k}\tbinom{n-x}{i-k}(q-1)^{i-k}.
\end{equation*}
Thus the semidefinite program is reduced to a linear program, which is precisely the \emph{Delsarte bound}  \cite{Delsarte1}. This relation between $\vartheta'$ and the Delsarte bound was recognized independently in McEliece, Rodemich, Rumsey \cite{McElieceRodemichRumsey} and Schrijver \cite{Schrijver1}. In the more general setting of (commutative) association schemes, Delsarte's linear programming bound is obtained from a semidefinite program by symmetry reduction and diagonalizing the Bose-Mesner algebra.

\subsection{Stronger bounds through triples}
Delsarte's bound is based on the distance distribution of a code, that is, on the number of code word-pairs for each orbit of pairs. Using orbits of triples, the bound can be tightened as was shown in Schrijver \cite{Schrijver2}. We describe this method for the binary case ($q=2$).

Denote by $\Stab(0,\Aut(2,n))\subseteq \Aut(2,n)$ the stabilizer of $0$. That is, $\Stab(0,\Aut(2,n))$ consists of just the permutations of the $n$ positions.
 
For any $u,v,w\in \{0,1\}^n$, denote by $O(u,v,w)$ the orbit of the triple $(u,v,w)$ under the action of $\Aut(2,n)$. Observe that since $\Aut(2,n)$ acts transitively on $\{0,1\}^n$, the orbits of triples are in bijection with the orbits of pairs under $\Stab(0,\Aut(2,n))$, since we may assume that $u$ is mapped to $0$. There are $\tbinom{n+3}{3}$ such orbits, indexed by integers $0\leq t\leq i,j\leq n$. Indeed, the orbit of $(v,w)$ under $\Stab(0,\Aut(2,n))$ is determined by the sizes $i,j$ and $t$ of respectively the supports of $v$ and $w$ and their intersection. 

Let $C\subseteq \{0,1\}^n$ be a code. We denote by $\lambda_{i,j}^t$ the number of triples $(u,v,w)\in C^3$ in the orbit indexed by $i,j$ and $t$. This is the analogue for triples of the distance distribution.

Denote by $M_C$ the zero-one $\{0,1\}^n\times\{0,1\}^n$ matrix defined by $(M_C)_{u,v}=1$ if and only if $u,v\in C$. So $M_C$ is a rank 1 positive semidefinite matrix. Now consider the following two matrices
\begin{equation*}
M' =\sum_{\sigma\in \Aut(2,n), 0\in\sigma C} M_{\sigma C},\qquad M'' =\sum_{\sigma\in\Aut(2,n), 0\not\in \sigma C}M_{\sigma C}.
\end{equation*}
Clearly, $M'$ and $M''$ are nonnegative and positive semidefinite. Furthermore, by construction $M'$ and $M''$ are invariant under the action of the stabilizer $\Stab(0,\Aut(2,n))$. Equivalently, any entry $M'_{uv}$ (or $M''_{uv}$) only depends on the orbit of $(0,u,v)$ under the action of $\Aut(2,n)$. In fact, $M'_{uv}$ equals (up to a factor depending on the orbit) the number of triples $(c,c',c'')\in C^3$ in that orbit.

The matrix $*$-algebra $\MA_n$ of complex $\{0,1\}^n\times\{0,1\}^n$ matrices invariant under $\Stab(0,\Aut(2,n))$ has a basis of zero-one matrices corresponding to the orbits of pairs under the action of $\Stab(0,\Aut(2,n))$: $\{A_{i,j}^t :  0\leq t\leq i,j\leq n\}$. This algebra is called the \emph{Terwilliger algebra} of the (binary) Hamming scheme. The facts about $M'$ and $M''$ above lead to a semidefinite programming bound for $A(n,d)$, in terms of variables $x_{i,j}^t$. Nonnegativity of $M',M''$ translates into nonnegativity of the $x_{i,j}^t$. Excluding positive distances smaller than $d$ translates into setting $x_{i,j}^t=0$ for orbits containing two words at distance $1$ through $d-1$. Semidefiniteness of $M'$ and $M''$ translate into
\begin{equation*}
\sum_{i,j,t}x_{i,j}^tA_{i,j}^t\succeq 0,\qquad \sum_{i,j,t}(x_{i+j-t,0}^0-x_{i,j}^t)A_{i,j}^t\succeq 0.
\end{equation*}
There are some more constraints. In particular, for $u,v,w\in \{0,1\}^n$ the variables of the orbits of $(u,v,w)$ and $(w,u,v)$ (or any other reordering) must be equal. This further reduces the number of variables. For more details, see Schrijver \cite{Schrijver2}.

Having reduced the SDP-variables by using the problem symmetry, the second step is to replace the positive semidefinite constraints by equivalent conditions on smaller matrices using a block diagonalization of $\MA_n$. The block diagonalization of $\MA_n$ is given explicitly in Schrijver \cite{Schrijver2} (see also Vallentin \cite{Vallentin2},  Dunkl \cite{Dunkl}). The number of blocks equals $1+\lfloor\frac{n}{2}\rfloor$, with sizes $n+1-2k$ for $k=0,\ldots,\lfloor\frac{n}{2}\rfloor$. Observe that the sum of the squares of these numbers indeed equals $\tbinom{n+3}{3}$, the dimension of $\MA_n$.   

For the non-binary case, an SDP upper bound for $A_q(n,d)$ can be defined similarly. The main differences are, that the orbits or triples are now indexed by four parameters $i,j,p,t$ satisfying $0\leq p\leq t\leq i,j$, $i+j-t\leq n$. Hence the corresponding invariant algebra is a matrix $*$-algebra of dimension $\tbinom{n+4}{4}$. In that case the block diagonalization is a bit more involved having blocks indexed by the integers $a,k$ with $0\leq a\leq k\leq n+a-k$, and size $n+a+1-2k$. See Gijswijt, Schrijver, Tanaka \cite{GijswijtSchrijverTanaka} for details.

\subsection{Hierarchies and $k$-tuples}

\subsubsection*{Moment matrices}
Let $V$ be a finite set and denote by $\Pow(V)$ its power set. For any $y:\Pow(V)\to \C$, denote by $M(y)$ the $\Pow(V)\times \Pow(V)$
matrix given by
\begin{equation*}
[M(y)]_{S,T} =y_{S\cup T}.
\end{equation*}
Such a matrix is called a \emph{combinatorial moment matrix} (see Laurent \cite{LaurentSurvey}). 

Let $C\subseteq V$ and denote by $x =\chi^C\in \{0,1\}^V$ the zero-one incidence vector of $C$. The vector $x$ can be ``lifted'' to $y\in \{0,1\}^{\Pow(V)}$ by setting $y_S =\prod_{v\in S} x_v$ for all $S\in \Pow(V)$. Note that $y_\emptyset=1$. Then the moment matrix $M(y)$ is positive semidefinite since $M(y)=yy^{\transp}$. 
This observation has an important converse (see Laurent \cite{LaurentSurvey}).
\begin{theorem}\label{thm:moment}
Let $M(y)$ be a moment matrix with $y_\emptyset=1$ and $M(y)\succeq 0$. Then $M(y)$ is a convex combination of moment matrices $M(y_1),\ldots, M(y_k)$, where $y_i$ is obtained from lifting a vector in $\{0,1\}^V$.
\end{theorem}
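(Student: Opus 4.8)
The plan is to prove the statement by induction on $|V|$, peeling off one element $v\in V$ at a time. The base case $V=\emptyset$ is trivial: $\Pow(V)=\{\emptyset\}$ and $M(y)=(y_\emptyset)=(1)$ is already the moment matrix of the (unique) lift. For the inductive step, fix $v\in V$, set $W=V\setminus\{v\}$, and split the index set $\Pow(V)$ into the subsets avoiding $v$ and those containing $v$, identifying both halves with $\Pow(W)$. The structural observation that makes everything work is that in this ordering $M(y)$ acquires the shape $\left(\begin{smallmatrix}A&N\\N&N\end{smallmatrix}\right)$, where $A=M(y|_{\Pow(W)})$ and $N=M(z)$ with $z_S=y_{S\cup\{v\}}$ are again moment matrices over $W$; three of the four blocks coincide because $[M(y)]_{S,T}$ depends only on $S\cup T$, so enlarging a row index $S$ to $S\cup\{v\}$ does not change the entry.

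Next I would use the congruence $\left(\begin{smallmatrix}A&N\\N&N\end{smallmatrix}\right)=\left(\begin{smallmatrix}I&I\\0&I\end{smallmatrix}\right)\left(\begin{smallmatrix}A-N&0\\0&N\end{smallmatrix}\right)\left(\begin{smallmatrix}I&0\\I&I\end{smallmatrix}\right)$ to conclude that $M(y)\succeq 0$ if and only if $N\succeq 0$ and $A-N\succeq 0$. Put $p:=y_{\{v\}}$; positive semidefiniteness of $M(y)$ forces $p\in[0,1]$ (look at the $2\times 2$ principal submatrix on rows and columns $\emptyset$ and $\{v\}$). Both $N=M(z)$ and $A-N=M(y|_{\Pow(W)}-z)$ are moment matrices over $W$, with $\emptyset$-entries $p$ and $1-p$ respectively. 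When $0<p<1$, rescaling makes $\tfrac1p N$ and $\tfrac1{1-p}(A-N)$ positive semidefinite moment matrices over $W$ whose $\emptyset$-entry equals $1$, so the induction hypothesis decomposes each of them as a convex combination of moment matrices of lifts of subsets of $W$. The boundary cases are handled separately and are easier: if $p=0$ then $N$ is positive semidefinite with a vanishing diagonal entry, hence $N=0$ and only $A$ must be decomposed; symmetrically, if $p=1$ then $A-N$ has a vanishing diagonal entry, hence $A=N$.

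Finally I would reassemble. The key is that, for $C\subseteq W$, the lift over $V$ of $C$ (which does not contain $v$) has moment matrix $\left(\begin{smallmatrix}M_C&0\\0&0\end{smallmatrix}\right)$, while the lift over $V$ of $C\cup\{v\}$ has moment matrix $\left(\begin{smallmatrix}M_C&M_C\\M_C&M_C\end{smallmatrix}\right)$, where $M_C$ denotes the moment matrix of the lift of $C$ over $W$ (a lifted coordinate indexed by a set containing $v$ vanishes unless $v\in C$). Writing $M(y)=\left(\begin{smallmatrix}A-N&0\\0&0\end{smallmatrix}\right)+\left(\begin{smallmatrix}N&N\\N&N\end{smallmatrix}\right)$ and plugging in the two induction-hypothesis decompositions, weighted by $1-p$ and $p$, then expresses $M(y)$ as a convex combination of moment matrices of lifts of subsets of $V$: the pieces of weight $1-p$ contribute lifts of sets not containing $v$, the pieces of weight $p$ contribute lifts of sets containing $v$, and the total weight is $(1-p)+p=1$.

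I expect the main obstacle to be conceptual rather than computational: spotting the block shape $\left(\begin{smallmatrix}A&N\\N&N\end{smallmatrix}\right)$ and distilling from it the clean equivalence $M(y)\succeq 0\iff N\succeq 0,\ A-N\succeq 0$. Once that is in hand, the induction and the bookkeeping with lifts are routine. One could instead try to show, via Minkowski's theorem, that the extreme points of the (compact, convex) set of positive semidefinite moment matrices with $y_\emptyset=1$ are precisely the lifts; but verifying that a non-extreme, non-lift member actually splits as a proper convex combination amounts to the same computation, so I would keep the constructive inductive argument.
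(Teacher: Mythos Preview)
Your proof is correct. The paper itself does not prove this theorem; it only states it and refers the reader to Laurent's survey \cite{LaurentSurvey}. Your inductive argument---splitting $\Pow(V)$ according to whether $v$ is present, observing the block shape $\left(\begin{smallmatrix}A&N\\N&N\end{smallmatrix}\right)$, using the congruence to reduce positive semidefiniteness to $N\succeq 0$ and $A-N\succeq 0$, and then reassembling via the two types of lifts---is precisely the standard proof given in that reference, so there is nothing to compare.

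One small remark on exposition: in the boundary case $p=0$ you write ``$N$ is positive semidefinite with a vanishing diagonal entry, hence $N=0$''. For a general positive semidefinite matrix a single zero on the diagonal of course kills only that row and column, not the whole matrix. The point here is that the vanishing diagonal entry is the one indexed by $\emptyset$, so the entire $\emptyset$-row of $N$ vanishes; since every entry of a moment matrix satisfies $N_{S,T}=z_{S\cup T}=N_{\emptyset,S\cup T}$, this forces all of $z$ (and hence all of $N$) to vanish. You clearly know this, but it is worth spelling out in the write-up.
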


Let $\Gamma=(V,E)$ be a graph on $V$. Theorem \ref{thm:moment} shows that the independence number is given by
\begin{equation}
\label{alphamoment}
\begin{split}
\alpha(\Gamma)=\max\Big\{\sum_{v\in V}y_{\{v\}} : & \; y_{\emptyset}=1, y_{S}=0 \text{ if $S$ contains an edge}, \\
& \; M(y)\succeq 0\Big\}.
\end{split}
\end{equation}
By replacing $M(y)$ by suitable principal submatrices, this (exponential size) semidefinite program can be relaxed to obtain more tractable upper bounds on $\alpha(\Gamma)$. For example, restricting rows and columns to sets of size at most 1 (and restricting $y$ to sets of size $\leq 2$), we obtain the Lov\'asz $\vartheta$ number (adding nonnegativity of $y_S$ for $S$ of size 2 gives $\vartheta'$).   

To describe a large class of useful submatrices, fix a nonnegative integer $s$ and a set $T\subseteq V$. Define $M_{s,T}=M_{s,T}(y)$ to be the matrix with rows and columns indexed by sets $S$ of size at most $s$, defined by 
\begin{equation}\label{MsT1}
[M_{s,T}]_{S,S'} =y_{S\cup S'\cup T}.
\end{equation}
So $M_{s,T}$ is a principal submatrix of $M(y)$, except that possibly some rows and columns are duplicated. For fixed $k$, we can restrict $y$ to subsets of size at most $k$ and replace $M(y)\succeq 0$ in (\ref{alphamoment}) by conditions
\begin{equation*}
M_{s,T}\succeq 0\quad\text{for all $T$ of size at most $t$},
\end{equation*}
where $2s+t\leq k$. This yields upper bounds on $\alpha(\Gamma)$ defined in terms of subsets of $V$ of size at most $k$.
The constraints (\ref{MsT1}) can be strengthened further to
\begin{equation}\label{MsT2}
\tilde{M}_{s,T} =\begin{pmatrix}M_{s,T'\cup T''}\end{pmatrix}_{T',T''\subseteq T}\succeq 0\quad \text{for all $T$ of size $t$}.
\end{equation}
Here $\tilde{M}_{s,T}$ is a $2^{|T|}\times 2^{|T|}$ matrix of smaller matrices, which can be seen as a submatrix of $M(y)$ (again up to duplication of rows and columns). By the moment-structure of $\tilde{M}_{s,T}$, condition (\ref{MsT2}) is equivalent to 
\begin{equation*}
\sum_{R'\subseteq R} (-1)^{|R\setminus R'|} M_{s,R'}\succeq 0\quad\text{for all $R$ of size at most $t$}.
\end{equation*}

Using SDP constraints of the form (\ref{MsT1}) and (\ref{MsT2}) with $2s+t\leq k$, we can obtain the (modified) theta number ($k=2$), Schrijver's bound using triples ($k=3$), or rather a strengthened version from Laurent \cite{LaurentTriples}, and the bound from Gijswijt, Mittelmann, Schrijver \cite{GijswijtMittelmannSchrijver} ($k=4$).

Several hierarchies of upper bounds have been proposed. The hierarchy introduced by Lasserre \cite{Lasserre} (also see Laurent \cite{LaurentTriples}) is obtained using (\ref{MsT1}) by fixing $t=0$ and letting $s=1,2,\ldots $ run over the positive integers. The hierarchy described in Gijswijt, Mittelmann, Schrijver \cite{GijswijtMittelmannSchrijver} is obtained using (\ref{MsT1}) by letting $k=1,2,\ldots$ run over the positive integers and restricting $s$ and $t$ by $2s+t\leq k$. Finally, the hierarchy defined in Gvozdenovi\'c, Laurent, Vallentin \cite{GvozdenovicLaurentVallentin} fixes $s=1$ and considers the constraints (\ref{MsT2}), where $t=0,1,\ldots$ runs over the nonnegative integers. There, also computational results are obtained for Paley graphs in the case $t=1,2$.

A slight variation is obtained by restricting $y$ to sets of at least one element, deleting the row and column corresponding to the empty set from (submatrices of) $M(y)$. The normalization $y_\emptyset=1$ is then replaced by $\sum_{v\in V}y_{\{v\}}=1$ and the objective is replaced by $\sum_{u,v\in V} y_{\{u,v\}}$. Since $M_{1,\emptyset}\succeq 0$ implies that $(\sum_{v\in V} y_{\{v\}})^2\leq y_{\emptyset}\sum_{u,v\in V} y_{\{u,v\}}$, this gives a relaxation. This variation was used in Schrijver \cite{Schrijver2} and Gijswijt, Schrijver, Tanaka \cite{GijswijtSchrijverTanaka}.

\subsubsection*{Symmetry reduction}
Application to the graph $\Gamma=\Gamma_q(n,d)$ with vertex set $V=\q^n$, gives semidefinite programming bounds for $A_q(n,d)$. A priory, this gives an SDP that has exponential size in $n$. However, using the symmetries of the Hamming space, it can be reduced to polynomial size as follows.

The action of $\Aut(q,n)$ on the Hamming space $V$, extends to an action on $\Pow(V)$ and hence on vectors and matrices indexed by $\Pow(V)$. Since the semidefinite program in (\ref{alphamoment}) is invariant under $\Aut(q,n)$, we may restrict $y$ (and hence $M(y)$) to be invariant under $\Aut(q,n)$, without changing the optimum value. The same holds for any of the bounds obtained using principal submatrices. Thus $M(y)_{S,T}$ only depends on the orbit of $S\cup T$. In particular, for any fixed integer $k$, the matrices $M_{s,T}$ with $2s+|T|\leq k$ are described using variables corresponding to the orbits of sets of size at most $k$. Two matrices $M_{s,T}$ and $M_{s,T'}$ are equal (up to permutation of rows and columns) when $T$ and $T'$ are in the same orbit under $\Aut(q,n)$. Hence, the number of variables is bounded by the number of orbits of sets of size at most $k$ ($O(n^{2^{k-1}-1})$ in the binary case), and for each such orbit there are at most a constant number of distinct constraints of the form (\ref{MsT1},\ref{MsT2}). This concludes the first step in the symmetry reduction.    

The matrices $M_{s,T}$ are invariant under the subgroup of $\Stab(T,\Aut(q,n))$ that fixes each of the elements of $T$. The dimension of the matrix $*$-algebra of invariant matrices is polynomial in $n$ and hence the size of the matrices can be reduced to polynomial size using the regular $*$-representation.  

By introducing more duplicate rows and columns and disregarding the row and column indexed by the empty set, we may index the matrices $M_{s,T}$ by \emph{ordered} $s$-tuples and view $M_{s,T}$ as an $(\q^n)^s \times (\q^n)^s$ matrix. Let $\MA_{s,T}$ be the matrix $*$-algebra of $(\q^n)^s \times (\q^n)^s$  matrices invariant under the the subgroup of $\Stab(T,\Aut(q,n))$. This matrix $*$-algebra can be block diagonalized using techniques from Gijswijt \cite{Gijswijt}.

\subsection{Generalizations}
The semidefinite programs discussed in the previous sections, are based on the distribution of pairs, triples and $k$-tuples in a code. For each tuple (up to isometry) there is a variable that reflects the number of occurrences of that tuple in the code. Exclusion of pairs at distance $1,\ldots, d-1$, is modeled by setting variables for violating tuples to zero. 

Bounds for other types of codes in the Hamming space can be obtained by setting the variables for excluded tuples to zero. This does not affect the underlying algebra of invariant matrices or the symmetry reduction. For triples in the binary Hamming space, this method was applied to orthogonality graphs in de Klerk, Pasechnik \cite{deKlerkPasechnik2} and to pseudo-distances in Bachoc, Z\'emor \cite{BachocZemor}. Lower bounds on (nonbinary) covering codes have been obtained in Gijswijt \cite{GijswijtThesis} by introducing additional linear constraints.

Bounds on constant weight binary codes were given in Schrijver \cite{Schrijver2}.

In the nonbinary Hamming space, the symbols $0,\ldots, q-1$ are all interchangeable. In the case of Lee-codes, the dihedral group $D_q$ acts on the alphabet, which leads to a smaller isometry group $D_q\wr S_n$. The Bose-Mesner algebra of the Lee-scheme is still commutative. The corresponding linear programming bound was studied in Astola \cite{Astola}. To the best knowledge of the author, stronger bounds using triples have not been studied in this case.

\section{Crossing numbers}
\label{Crossing numbers}

We describe an application of the regular $*$-representation to
obtain a lower bound on the crossing number of complete bipartite
graphs.
This was described in de Klerk, Pasechnik, Schrijver \cite{deKlerkPasechnikSchrijver}, and
extends a method of de Klerk, et. al. \cite{deKlerkMaharry}.

The {\em crossing number} $\kr(\Gamma)$ of a graph $\Gamma$ is the minimum number
of intersections of edges when $\Gamma$ is drawn in the
plane such that all vertices are distinct.
The {\em complete bipartite graph} $K_{m,n}$ is the graph with
vertices $1,\ldots,m,u_1,\ldots,u_n$ and edges all pairs
$iu_j$ for $i\in[m]$ and $j\in[n]$.
(This notation will be convenient for our purposes.)

This relates to the problem raised by the paper of
Zarankiewicz \cite{Zarankiewicz}, asking if
\begin{equation}\label{8jl10a}
\kr(K_{m,n})\stackrel{?}{=}
Z(m,n) =
\lfloor\kfrac14{(m-1)^2}\rfloor
\lfloor\kfrac14{(n-1)^2}\rfloor.
\end{equation}
In fact, Zarankiewicz claimed to have a proof, which however
was shown to be incorrect.
In (\ref{8jl10a}), $\leq$ follows from a direct construction.
Equality was proved by
Kleitman \cite{Kleitman} if $\min\{m,n\}\leq 6$ and by
Woodall \cite{Woodall} if $m\in\{7,8\}$ and $n\in\{7,8,9,10\}$.

Consider  any $m,n$.
Let $Z_m$ be the set of cyclic permutations of $[m]$
(that is, the permutations with precisely one orbit).
For any drawing of $K_{m,n}$ in the plane and for any $u_i$,
let $\gamma(u_i)$ be the cyclic permutation $(1,i_2,\ldots,i_m)$
such that the edges leaving $u_i$ in clockwise order, go to
$1,i_2,\ldots,i_m$ respectively.

For $\sigma,\tau\in Z_m$, let
$C_{\sigma,\tau}$ be equal to the minimum number of crossings
when we draw $K_{m,2}$ in the plane such that $\gamma(u_1)=\sigma$
and $\gamma(u_2)=\tau$.
This gives a matrix $C=(C_{\sigma,\tau})$ in $\oR^{Z_m\times Z_m}$.
Then the number $\alpha_m$ is defined by:
\begin{equation}\label{13ja05a}
\alpha_m =\min\{\langle X, C \rangle 
:
X\in\oR_+^{Z_m\times Z_m},
X \succeq 0,
\langle X, J \rangle =1\},
\end{equation}
where $J$ is the all-one matrix in $\oR^{Z_m\times Z_m}$.

Then $\alpha_m$ gives a lower bound on $\kr(K_{m,n})$,
as was shown by de Klerk, et. al. \cite{deKlerkMaharry}.

\begin{mytheorem}
$\displaystyle
\kr(K_{m,n})\geq \kfrac12n^2\alpha_m
-
\kfrac12n \lfloor\kfrac14{(m-1)^2}\rfloor
$
for all $m,n$.
\end{mytheorem}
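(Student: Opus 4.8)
The plan is to bound $\kr(K_{m,n})$ from below by combining a purely topological decomposition of the crossings of an optimal drawing into contributions of pairs $\{u_i,u_j\}$ with the feasibility of a single rank-one matrix in the semidefinite program \eqref{13ja05a}. First I would fix an optimal \emph{good} drawing $D$ of $K_{m,n}$, i.e.\ one with exactly $\kr(K_{m,n})$ crossings in which no edge crosses itself, no two edges sharing an endpoint cross, and no two edges cross more than once (any optimal drawing can be turned into such a one without increasing the number of crossings). Write $\sigma_i=\gamma(u_i)\in Z_m$, and for $i\neq j$ let $\kr_D(u_i,u_j)$ be the number of crossings of $D$ between an edge at $u_i$ and an edge at $u_j$. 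Since $D$ is good, every crossing involves two edges with distinct endpoints, hence lies on a unique pair $\{i,j\}$, so $2\kr(K_{m,n})=\sum_{i\neq j}\kr_D(u_i,u_j)$. The key topological step is the inequality $\kr_D(u_i,u_j)\ge C_{\sigma_i,\sigma_j}$: deleting from $D$ all vertices $u_\ell$ with $\ell\notin\{i,j\}$ leaves a good drawing of $K_{m,2}$ whose rotations at the two right-hand vertices are $\sigma_i$ and $\sigma_j$ (erasing edges not incident to $u_i$ does not disturb the clockwise order of the edges at $u_i$), and all of whose crossings are of the $\kr_D(u_i,u_j)$-type; by the very definition of $C$ as a minimum over all such drawings this number is at least $C_{\sigma_i,\sigma_j}$.

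Next I would feed the rotation profile into the SDP. Let $v\in\oR_+^{Z_m}$ have entries $v_\sigma=|\{i:\sigma_i=\sigma\}|$, so $\sum_\sigma v_\sigma=n$, and set $X=\tfrac1{n^2}\,vv^{\transp}$. Then $X\ge 0$ entrywise, $X\succeq 0$ (it has rank one), and $\langle X,J\rangle=\tfrac1{n^2}(\sum_\sigma v_\sigma)^2=1$, so $X$ is feasible for \eqref{13ja05a} and $\langle X,C\rangle\ge\alpha_m$. As $C$ is symmetric ($C_{\sigma,\tau}=C_{\tau,\sigma}$, by swapping the two right-hand vertices of $K_{m,2}$), $\langle X,C\rangle=\tfrac1{n^2}\sum_{i,j\in[n]}C_{\sigma_i,\sigma_j}$, hence $\sum_{i,j}C_{\sigma_i,\sigma_j}\ge n^2\alpha_m$. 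Combining with the topological step and separating the diagonal $i=j$ terms,
\begin{align*}
2\kr(K_{m,n})&=\sum_{i\neq j}\kr_D(u_i,u_j)\ \ge\ \sum_{i\neq j}C_{\sigma_i,\sigma_j}\\
&=\sum_{i,j}C_{\sigma_i,\sigma_j}-\sum_i C_{\sigma_i,\sigma_i}\ \ge\ n^2\alpha_m-\sum_i C_{\sigma_i,\sigma_i}.
\end{align*}

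Finally I would invoke the diagonal bound: for every cyclic permutation $\sigma$ of $[m]$, $C_{\sigma,\sigma}\le\lfloor\kfrac14{(m-1)^2}\rfloor$. After relabelling $[m]$ we may take $\sigma=(1,\dots,m)$ and exhibit a drawing of $K_{m,2}$ with both rotations equal to $\sigma$ and at most $\lfloor\kfrac14{(m-1)^2}\rfloor$ crossings, e.g.\ via a Zarankiewicz-type placement of $u_1,u_2$ together with $1,\dots,m$ in convex position (note $\lfloor\kfrac14{(m-1)^2}\rfloor=Z(m,3)$). Then $\sum_i C_{\sigma_i,\sigma_i}\le n\lfloor\kfrac14{(m-1)^2}\rfloor$, and dividing the last display by $2$ gives exactly $\kr(K_{m,n})\ge\kfrac12 n^2\alpha_m-\kfrac12 n\lfloor\kfrac14{(m-1)^2}\rfloor$.

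The hard part is the diagonal bound $\max_\sigma C_{\sigma,\sigma}\le\lfloor\kfrac14{(m-1)^2}\rfloor$: this is where the precise constant of the theorem is forced, and it requires an explicit good drawing of $K_{m,2}$ realising an \emph{arbitrary} prescribed cyclic order simultaneously at both right-hand vertices with only $\lfloor\kfrac14{(m-1)^2}\rfloor$ crossings. By contrast the topological inequality $\kr_D(u_i,u_j)\ge C_{\sigma_i,\sigma_j}$, while the conceptual heart of the reduction, is routine once the good-drawing normalisations are made.
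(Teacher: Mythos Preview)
Your argument is correct and matches the paper's proof essentially step for step: both build the rank-one feasible matrix $X=\tfrac{1}{n^2}vv^{\transp}$ from the rotation profile, use $\langle X,C\rangle\ge\alpha_m$, expand $n^2\langle X,C\rangle=\sum_{i,j}C_{\sigma_i,\sigma_j}$, bound the off-diagonal sum by $2\kr(K_{m,n})$ via the restriction to $K_{m,2}$, and handle the diagonal with $C_{\sigma,\sigma}\le\lfloor\tfrac14(m-1)^2\rfloor$. The paper simply invokes the diagonal value as a known fact (indeed with equality) from the cited de~Klerk et~al.\ reference, whereas you flag it as the nontrivial ingredient and sketch a construction; otherwise the two proofs are the same.
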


\begin{proof}
Consider a drawing of $K_{m,n}$ in the plane with
$\kr(K_{m,n})$ crossings.
For each cyclic permutation $\sigma$, let $d_{\sigma}$ be the
number of vertices $u_i$ with $\gamma(u_i)=\sigma$.
Consider $d$ as column vector in $\oR^{Z_m}$, and define the matrix
$X$ in $\oR^{Z_m\times Z_m}$ by
$$
X =n^{-2}dd^{\sf T}.
$$
Then $X$ satisfies the constraints in (\ref{13ja05a}), hence
$\alpha_m\leq \langle X, C \rangle$.
For $i,j=1,\ldots,n$, let $\beta_{i,j}$ denote the number
of crossings of the edges leaving $u_i$ with the edges leaving
$u_j$.
So if $i\neq j$, then
$\beta_{i,j}\geq C_{\gamma(u_i),\gamma(u_j)}$.
Hence
$$
n^2\langle X, C \rangle=
\langle dd^{\sf T} C \rangle)=
d^{\sf T}Cd=
\sum_{i,j=1}^n
C_{\gamma(u_i),\gamma(u_j)}
$$
$$
\leq
\sum_{\stackrel{i,j=1}{i\neq j}}^n
\beta_{i,j}
+
\sum_{i=1}^n
C_{\gamma(u_i),\gamma(u_i)}
=
2\kr(K_{m,n})+
n
\lfloor\kfrac14{(m-1)^2}\rfloor.
$$
Therefore,
$$
\kr(K_{m,n})
\geq
\kfrac12n^2\langle X, C \rangle 
-
\kfrac12n
\lfloor\kfrac14{(m-1)^2}\rfloor
\geq
\kfrac12 \alpha_m n^2
-
\kfrac12n
\lfloor\kfrac14{(m-1)^2}\rfloor.~~~~
\Box
$$
\end{proof}

The semidefinite programming problem (\ref{13ja05a}) defining $\alpha_m$ has
order $(m-1)!$.
Using the regular $*$-representation, we can reduce the size.
Fix $m\in\oN$.
Let $G =S_m\times \{-1,+1\}$, and define $h:G\to S_{Z_m}$ by
$$
h_{\pi,i}(\sigma) =\pi\sigma^i\pi^{-1}
$$
for $\pi\in S_m$, $i\in\{-1,+1\}$, $\sigma\in Z_m$.
So $G$ acts on $Z_m$.
It is easy to see that $C$ and (trivially) $J$ are $G$-invariant.

Using the regular $*$-representation, $\alpha_m$ up to $m=9$ was computed.
For $m=8$, $\alpha_8=5.8599856444\ldots$, implying
$$
\kr(K_{8,n})\geq 2.9299n^2-6n.
$$
This implies for each fixed $m\geq 8$, with an averaging argument over all
subgraphs $K_{8,n}$:
$$
\lim_{n\to\infty}\frac{\kr(K_{m,n})}{Z(m,n)}\geq 0.8371\frac{m}{m-1}.
$$

Moreover, $\alpha_9=7.7352126\ldots$, implying
$$
\kr(K_{9,n})\geq 3.8676063 n^2-8n,
$$
and for each fixed $m\geq 9$:
$$
\lim_{n\to\infty}\frac{\kr(K_{m,n})}{Z(m,n)}\geq 0.8594\frac{m}{m-1}.
$$
Thus we have an asymptotic approximation to Zarankiewicz's problem.

The orbits of the action of $G$ onto $Z_m\times Z_m$ can be identified as
follows.
Each orbit contains an element $(\sigma_0,\tau)$ with
$\sigma_0 =(1,\ldots,m)$.
So there are at most $(m-1)!$ orbits.
Next, $(\sigma_0,\tau)$ and $(\sigma_0,\tau')$ belong to the same
orbit if and only if $\tau'=\tau^g$ for some $g\in G$ that fixes $\sigma_0$.
(There are only few of such $g$.)
In this way, the orbits can be identified by computer, for $m$ not too large.
The corresponding values $C_{\sigma,\tau}$ for each orbit $[\sigma,\tau]$,
and the multiplication parameters, also can be found using elementary
combinatorial algorithms.
The computer does this all within a few minutes, for $m\leq 9$.
But the resulting semidefinite programming problem took, in 2006, seven days.
It was the largest SDP problem solved by then.

\section{Spherical codes}
\label{Spherical codes}

Finding upper bounds for codes on the sphere is our next application. Here one deals with infinite-dimensional  semidefinite programs  which are invariant under the orthogonal group which is continuous and compact. So we are in the situation for which the techniques of Section~\ref{Representation theory} work. 

We start with some definitions: The unit sphere $S^{n-1}$ of the Euclidean space $\R^n$
equipped with its standard inner product 
$x\cdot y=\sum_{i=1}^n x_iy_i$, is defined as usual by:
\begin{equation*}
S^{n-1}=\{(x_1,\dots,x_n)\in \R^n : x\cdot x=1\}.
\end{equation*}
It is a  compact space, on which the orthogonal group $O_n(\R)$ acts
homogeneously. The stabilizer $\Stab(x_0,O_n(\R))$ of one point $x_0\in
S^{n-1}$ can be identified with the orthogonal group of the orthogonal
complement $(\R x_0)^{\perp}$ of the line $\R x_0$ thus with
$O_{n-1}(\R)$, leading to an identification between the sphere and the
quotient space $O_n(\R)/O_{n-1}(\R)$. The unit sphere  is endowed with the standard
$O_n(\R)$-invariant Lebesgue measure $\mu$ with the normalization $\mu(S^{n-1})=1$.
The {\em angular distance} $d_{\theta}(x,y)$ of $(x,y)\in (S^{n-1})^2$ is
defined by
\begin{equation*}
d_{\theta}(x,y) =\arccos(x\cdot y)
\end{equation*}
and is $O_n(\R)$-invariant. Moreover,   the metric space $(S^{n-1},d_{\theta})$  is {\em two-point homogeneous} (see Section
\ref{Ex: commutative}) under
$O_n(\R)$.

The {\em minimal angular distance} $d_{\theta}(C)$ of a subset
$C\subset S^{n-1}$ is by definition 
the minimal  angular distance of pairs of distinct
elements of $C$. It is a classical problem to determine the maximal
size of $C$, subject to the condition that $d_{\theta}(C)$ is greater
or equal to some given minimal value $\theta_{\min}$. 
This problem is the fundamental question of the theory of error
correcting codes (see e.g. Conway, Sloane \cite{ConwaySloane}, Ericson, Zinoviev \cite{EricsonZinoviev}). In this context, the subsets $C\subset
S^{n-1}$ are referred to as {\em spherical codes}. In geometry, the case
$\theta_{\min}=\pi/3$ corresponds to the famous {\em kissing number problem} which asks
for the maximal number of spheres that can simultaneously 
touch a central sphere without overlapping, all spheres having the
same radius (see e.g. Conway, Sloane \cite{ConwaySloane}). 

We introduce notations in a slightly more general framework. We say
that $C\subset S^{n-1}$ {\em avoids $\Omega\subset (S^{n-1})^2$} if,
for all $(x,y)\in C^2$, $(x,y)\notin \Omega$. We define, for a measure $\lambda$,
\begin{equation*}
A(S^{n-1}, \Omega, \lambda) =\sup \big\{ \lambda(C): C\subset
S^{n-1} \text{ measurable},\ C\text{ avoids }\Omega\big\}.
\end{equation*}
We have in mind the following situations of interest:
\begin{enumerate}
\item[(i)] $\Omega=\{(x,y) : d_{\theta}(x,y)\in ]0,\theta_{\min}[\}$ and $\lambda$ is the counting
    measure denoted $\mu_c$. Then, the $\Omega$-avoiding sets are exactly the
    spherical codes $C$ such that $d_{\theta}(C)\geq \theta_{\min}$.
\item[(ii)] $\Omega=\{(x,y) : d_{\theta}(x,y)=\theta\}$ for some value
  $\theta\neq 0$, and $\lambda=\mu$. Here we consider subsets
  avoiding only one value of distance, so these subsets can be infinite.
This case has interesting connections with the famous problem of finding
the  {\em chromatic number of Euclidean space} (see Soifer \cite{Soifer}, Bachoc et. al. \cite{BachocNebeOliveiraVallentin}, Oliveira, Vallentin \cite{OliveiraVallentin}, Oliveira \cite{Oliveira})
\item[(iii)] $\Omega=\{(x,y) : d_{\theta}(x,y)\in ]0,\theta_{\min}[ \text{ or
      } (x,y) \notin \Scap(e,\phi)^2 \}$, and $\lambda=\mu_c$.
Here $\Scap(e,\phi)$ denotes the {\em spherical cap} with center $e$
and radius $\phi$:
\begin{equation*}
\Scap(e,\phi) =\big\{x\in S^{n-1} : d_{\theta}(e,x)\leq \phi\big\}
\end{equation*}
and we are dealing with subsets of a spherical cap with given minimal
angular distance.
\end{enumerate}

The computation of $A(S^{n-1}, \Omega, \lambda)$ is a difficult and unsolved problem
in general, so one aims
at finding lower and upper bounds for this number.
To that end, for finding upper bounds, we borrow ideas from combinatorial optimization. Indeed, if one
thinks of the pair $(S^{n-1},\Omega)$ being a graph with vertex set
$S^{n-1}$ and edge set $\Omega$, then a set $C$ avoiding $\Omega$
corresponds to an {\em independent set}, and
$A(S^{n-1}, \Omega, \lambda)$  to the {\em independence number} of
this graph. In general, for a graph $\Gamma$ with vertex set $V$ and edge set
$E$, an {\em independent set} is a subset $S$ of $V$ such that
no pairs of elements of $S$ are connected by an edge, and the {\em independence number}
$\alpha(\Gamma)$ of $\Gamma$ is the maximal number of elements of an
independent set. The  {\em Lov\'asz theta number}
$\vartheta(\Gamma)$ was introduced by Lov\'asz \cite{Lovasz}. It gives an upper bound of the independence number $\alpha(\Gamma)$ of a
graph $\Gamma$, and it is the optimal solution of a semidefinite
program. We review Lov\'asz theta number in Section
\ref{Theta}. Then, in Section \ref{Generalizations Theta},
we introduce generalizations of this notion, in the form of conic
linear programs, that provide upper bounds for $A(S^{n-1}, \Omega,
\lambda)$. Using symmetry reduction, in the cases (i), (ii), (iii) above,  it is possible to approximate
these conic linear programs with semidefinite programs, that can be
practically computed when the dimension $n$ is not too large. This step
is explained in Section \ref{Reduction}. It involves the description 
of the cones
of $G$-invariant positive definite functions, for the
groups
$G=O_n(\R)$ and $G=\Stab(x_0,O_n(\R))$.
We provide this description, following the lines of Section \ref{Representation theory},
in Sections \ref{Pdf full} and \ref{Pdf stab}.
Finally, in Section \ref{Strengthening} we indicate further applications.

\subsection{Lov\'asz $\vartheta$}\label{Theta} This number can be defined in   many
equivalent ways (see Lov\'asz \cite{Lovasz},  Knuth \cite{Knuth}). We present here the most appropriate one  in view of
our purpose, which is the generalization to $S^{n-1}$. 
\begin{mydefinition}
The {\em theta number} of the graph $\Gamma=(V,E)$ with vertex set
$V=\{1,2,\dots,n\}$  is 
defined by
\begin{equation}\label{theta primal}
  \vartheta(\Gamma) = \max\big\{  \langle X, J_n\rangle : X \succeq 0,
  \langle X, I_n\rangle =1, X_{ij}=0 \text{ for all } (i,j)\in E \big\}
\end{equation}
where $I_n$ and $J_n$ denote respectively the identity and the
all-one matrices of size $n$.
\end{mydefinition}

The dual program gives another expression of $\vartheta(\Gamma)$
(there is no {\em duality gap} here
because $X=I_n$ is a strictly feasible solution of \eqref{theta
  primal} so the Slater condition is fulfilled):
\begin{equation}\label{theta dual}
  \vartheta(\Gamma) = \min \big\{  t: X \succeq 0,
  X_{ii}=t-1,  X_{ij}=-1 \text{ for all } (i,j)\notin E \big\}.
\end{equation}
We have already mentioned that this number provides an upper bound
for 
the independence number
$\alpha(\Gamma)$ of the graph $\Gamma$. It also provides a lower bound for the
{\em chromatic number} $\chi(\overline{\Gamma})$ of the complementary graph
$\overline{\Gamma}$ (the chromatic number of a
graph is the minimal number of colors needed to color its vertices  so
that two 
connected vertices receive different colors);
this is the content of the celebrated {\em Sandwich theorem} of Lov\'asz \cite{Lovasz}:
\begin{mytheorem}
\begin{equation}\label{Sandwich}
\alpha(\Gamma)\leq \vartheta(\Gamma)\leq \chi(\overline{\Gamma})
\end{equation}
\end{mytheorem}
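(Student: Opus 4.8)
The plan is to establish the two inequalities in \eqref{Sandwich} separately, using the primal description \eqref{theta primal} for the left inequality and the dual description \eqref{theta dual} for the right one. Since the excerpt already records that there is no duality gap, the two programs have common optimal value $\vartheta(\Gamma)$, so on each side I may exhibit a feasible point of whichever formulation is more convenient and bound $\vartheta(\Gamma)$ by its objective value.

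\emph{The inequality $\alpha(\Gamma)\leq\vartheta(\Gamma)$.} Let $S\subseteq V$ be an independent set of $\Gamma$ with $|S|=\alpha(\Gamma)$, and let $\mathbf{1}_S\in\{0,1\}^n$ be its characteristic vector. I would take $X=\tfrac{1}{|S|}\mathbf{1}_S\mathbf{1}_S^{\sf T}$. This matrix is positive semidefinite (it is rank one), satisfies $\langle X,I_n\rangle=\operatorname{trace}(X)=\tfrac{1}{|S|}\|\mathbf{1}_S\|^2=1$, and has $X_{ij}\neq 0$ only when $i,j\in S$; for such indices with $i\neq j$ one has $(i,j)\notin E$ because $S$ is independent, so the constraint $X_{ij}=0$ for $(i,j)\in E$ holds. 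Finally $\langle X,J_n\rangle=\sum_{i,j}X_{ij}=\tfrac{1}{|S|}\bigl(\textstyle\sum_i(\mathbf{1}_S)_i\bigr)^2=|S|$, so $X$ is feasible for \eqref{theta primal} with objective value $\alpha(\Gamma)$, whence $\vartheta(\Gamma)\geq\alpha(\Gamma)$.

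\emph{The inequality $\vartheta(\Gamma)\leq\chi(\overline{\Gamma})$.} Put $t=\chi(\overline{\Gamma})$ and fix a proper $t$-colouring $c\colon V\to\{1,\dots,t\}$ of $\overline{\Gamma}$. Consider the $t\times t$ matrix $M=tI_t-J_t$, whose diagonal entries are $t-1$, whose off-diagonal entries are $-1$, and whose eigenvalues are $0$ (once) and $t$ (with multiplicity $t-1$); in particular $M\succeq 0$. Letting $P\in\{0,1\}^{n\times t}$ be the matrix with $P_{ik}=1$ exactly when $c(i)=k$, I would set $X=PMP^{\sf T}$, so that $X_{ij}=M_{c(i),c(j)}$ and $X\succeq 0$. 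Then $X_{ii}=t-1$ for all $i$, and whenever $(i,j)\notin E$ with $i\neq j$ we have $(i,j)\in E(\overline{\Gamma})$, hence $c(i)\neq c(j)$ and $X_{ij}=-1$. Thus $(X,t)$ is feasible for \eqref{theta dual}, which gives $\vartheta(\Gamma)\leq t=\chi(\overline{\Gamma})$.

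The only points requiring a moment of care are the positive semidefiniteness verifications — both candidate matrices are of Gram type, with $M\succeq 0$ following from its explicit eigenvalues — and the bookkeeping that the colouring condition on $\overline{\Gamma}$ matches exactly the non-edges of $\Gamma$ occurring in \eqref{theta dual}. Of the two bounds, the upper bound is the less transparent, since one must pass to the dual and guess the matrix $M=tI_t-J_t$; but beyond that I expect no genuine obstacle, as the argument is a direct exhibition of feasible solutions on each side.
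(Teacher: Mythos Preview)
Your proof is correct and is essentially the standard argument for the Sandwich theorem. Note, however, that the paper does not actually supply its own proof of this result: it merely states the theorem and attributes it to Lov\'asz \cite{Lovasz}, so there is nothing in the paper to compare your argument against. Your construction of the primal feasible $X=\tfrac{1}{|S|}\mathbf{1}_S\mathbf{1}_S^{\sf T}$ for the lower bound and the dual feasible $X=P(tI_t-J_t)P^{\sf T}$ for the upper bound is exactly the classical proof, and all the verifications you list go through without issue.
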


By modifying the definition of the theta number we get the strengthening $\alpha(\Gamma)\leq \vartheta'(\Gamma) \leq \vartheta(\Gamma)$, where
\begin{equation}\label{theta prime primal}
\begin{array}{ll}
  \vartheta'(\Gamma) = \max\big\{  \langle X, J_n\rangle : & X \succeq
  0,\  X\geq 0,\\
  & \langle X, I_n\rangle =1, \ X_{ij}=0 \text{ for all } (i,j)\in E
  \big\}.
\end{array}
\end{equation}
Again, this program has an equivalent dual form:
\begin{equation}\label{theta prime dual}
\begin{array}{ll}
  \vartheta'(\Gamma) = \min \big\{  t: X \succeq 0,
  & X_{ii}\leq t-1,\\
& X_{ij}\leq -1 \text{ for all } (i,j)\notin E \big\}.
\end{array}
\end{equation}

\subsection{Generalizations of Lov\'asz $\vartheta$ to the  sphere}\label{Generalizations Theta}
In order to obtain the wanted generalization, it is natural to
replace in \eqref{theta prime primal} 
the cone of positive semidefinite matrices indexed by the vertex set
$V$ of the underlying graph, with the cone of
continuous, positive definite functions on the sphere, defined in
Section \ref{Pdf}. In fact, there
is a small difficulty here due to the fact that, in contrast with a
finite dimensional Euclidean space, the space $\CC(X)$ of continuous
functions on an infinite compact space cannot be identified with its
topological dual. Indeed, the {\em topological dual} $\CC(X)^*$ of
$\CC(X)$ (i.e. the space
of continuous linear forms on $\CC(X)$) 
is the space $\MM(X)$ of complex valued Borel regular measures on $X$
(see e.g. Rudin \cite[Theorem 6.19]{Rudin}, Conway \cite[Appendix C]{Conway}).
Consequently, the two forms of $\vartheta'(\Gamma)$ given by
\eqref{theta prime primal} and \eqref{theta prime dual} lead in the infinite case
to two pairs of conic linear programs. As we shall see, the right
choice between the two in order to obtain an appropriate bound for 
$A(S^{n-1}, \Omega, \lambda)$ depends on the set $\Omega$.

\begin{mydefinition} Let $\Omega^c =\{(x,y) : (x,y)\notin \Omega \text{ and
    }x\neq y\}$. Let
\begin{equation}\label{theta1}
\begin{array}{ll}
\vartheta_1(S^{n-1}, \Omega) =\inf\big\{\  t : F\succeq 0, & F(x,x)\leq t-1, \\
& F(x,y)\leq -1  \text{ for all }(x,y) \in \Omega^c\ \big\},
\end{array}
\end{equation}
and
\begin{equation}\label{theta2}
\begin{array}{ll}
\vartheta_2(S^{n-1},\Omega) =\sup\big\{ \ \langle F, 1\rangle : & F\succeq 0, \ F\geq 0, \\
& \langle F, \1_{\Delta}\rangle =1,\\
& F(x,y)=0   \text{ for all }(x,y)\in \Omega\ \big\}.
\end{array}
\end{equation}
\end{mydefinition}
In the above programs, $F$ belongs to $\CC((S^{n-1})^2)$. The
  notation $F\geq 0$ stands for ``$F$ takes nonnegative values''. The
function taking the constant value $1$ is denoted $1$, so that 
$\langle F, 1\rangle$ equals the integral of $F$ over $(S^{n-1})^2$. 
In contrast, with $\langle F, \1_{\Delta}\rangle$ we mean the integral
of the one variable function $F(x,x)$ over $\Delta =\{(x,x): x\in
S^{n-1}\}$. Let us notice that, since $F$ is continuous in both programs, the sets
$\Omega$ and $\Omega^c$ can be replaced by their topological closures 
$\overline{\Omega}$ and $\overline{\Omega^c}$. 

A {\em positive semidefinite measure} on $(S^{n-1})^2$ is one that satisfies 
$\langle \lambda, f\rangle\geq 0$ for all $f\succeq 0$, where 
\begin{equation*}
\langle \lambda, f\rangle =\int_{X} f(x)d\lambda(x).
\end{equation*}
and this property
is denoted $\lambda\succeq 0$. In a similar way a nonnegative measure
$\lambda$ is denoted $\lambda\geq 0$. 

\begin{mytheorem}\label{Th5} With the above notations and definitions, we have:
\begin{enumerate} 
\item  If $\overline{\Omega^c}\cap \Delta=\emptyset$, then the program
  dual to $\vartheta_1$ reads
\begin{equation*}
\begin{array}{ll}
\vartheta_1^*(S^{n-1},\Omega) =\sup\big\{ \ \langle \lambda, 1 \rangle : & \lambda\succeq 0, \ \lambda\geq 0, \\
& \lambda(\Delta) =1,\\
& \supp(\lambda)\subset   \overline{\Omega^c}\cup \Delta\ \big\}
\end{array}
\end{equation*}
and
\begin{equation}\label{theta1*}
A(S^{n-1},\Omega,\mu_c)\leq \vartheta_1(S^{n-1},\Omega)=\vartheta_1^*(S^{n-1},\Omega).
\end{equation}
\item If $\overline{\Omega}\cap \Delta=\emptyset$, then
the program  dual to $\vartheta_2$ reads
\begin{equation*}
\begin{array}{ll}
\vartheta_2^*(S^{n-1}, \Omega) =\inf\big\{\  t : &\lambda\succeq 0, \\
& \lambda\leq t\mu_{\Delta}-\mu^2 \text{ over } (S^{n-1})^2\setminus
\overline{\Omega}\  \big\},
\end{array}
\end{equation*}
and 
\begin{equation}\label{theta2*}
A(S^{n-1},\Omega,\mu)\leq \vartheta_2(S^{n-1},\Omega)=\vartheta_2^*(S^{n-1},\Omega).
\end{equation}
\end{enumerate}
\end{mytheorem}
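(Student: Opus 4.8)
The plan is the same for both parts — a feasibility/weak-duality bound, identification of the two programs as an adjoint pair of conic linear programs, and a no-duality-gap argument — so I describe it for part~(1) and flag the changes for part~(2). Using the remark preceding the statement I would first replace $\Omega,\Omega^c$ by their closures, so that $\Omega$ is closed and $\overline{\Omega^c},\Delta$ are \emph{disjoint compact} subsets of $(S^{n-1})^2$; note that $\overline{\Omega^c}\cap\Delta=\emptyset$ says precisely that $\Omega$ contains all off-diagonal pairs of sufficiently small distance, so $\Omega$-avoiding sets are uniformly separated and hence finite.

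\emph{Step 1: the bound $A\le\vartheta_1$.} For an $\Omega$-avoiding $C\subset S^{n-1}$ (finite, by the above) and any feasible $(F,t)$ of $\vartheta_1$, positive definiteness makes $\big(F(x,y)\big)_{x,y\in C}$ a positive semidefinite matrix, so summing all its entries and using $F(x,x)\le t-1$ and $F(x,y)\le-1$ on $\Omega^c$,
\begin{equation*}
0\le\sum_{x,y\in C}F(x,y)=\sum_{x\in C}F(x,x)+\sum_{\substack{x,y\in C\\ x\ne y}}F(x,y)\le|C|(t-1)-|C|(|C|-1),
\end{equation*}
whence $|C|\le t$; taking the infimum over $(F,t)$ gives $\mu_c(C)=|C|\le\vartheta_1$.

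\emph{Step 2: the adjoint pair.} I would write $\vartheta_1$ as a conic linear program over the dual pair $\big(\CC((S^{n-1})^2),\,\MM((S^{n-1})^2)\big)$, with $\MM=\CC^*$ the regular Borel measures (Riesz representation). Its variables are $F\in\CC((S^{n-1})^2)$ and $t\in\R$, and its constraints are membership of $F$ in the cone of positive definite kernels and of $x\mapsto(t-1)-F(x,x)$ and $(x,y)\mapsto-1-F(x,y)$ in the nonnegative cones of $\CC(\Delta)$ and $\CC(\overline{\Omega^c})$. Taking the adjoint of this constraint map — stationarity in $t$ forces the diagonal multiplier measure to have total mass $1$, stationarity in $F$ forces the positive-semidefinite-measure dual variable to equal the difference of the two nonnegative multiplier measures — and collecting the multipliers into a single measure supported on $\overline{\Omega^c}\cup\Delta$ yields exactly $\vartheta_1^*$. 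Weak duality $\vartheta_1^*\le\vartheta_1$ is then the automatic pairing inequality, which with Step~1 already proves the inequality part of \eqref{theta1*}.

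\emph{Step 3: no duality gap.} This is the crux, and where the hypothesis $\overline{\Omega^c}\cap\Delta=\emptyset$ enters essentially. Having set up the adjoint pair in Step~2, I would close the gap by a general infinite-dimensional conic-duality theorem (of the type in Anderson--Nash or Barvinok): strong duality holds once the constraint map has weak-$*$ closed image and one of the programs admits a strictly feasible point. Both ingredients rely on $\overline{\Omega^c}$ and $\Delta$ being disjoint compacta: the strict feasibility is obtained by combining an explicit continuous positive definite kernel with a Urysohn-type correction that is very negative near $\overline{\Omega^c}$ and harmless on $\Delta$, and the same separation keeps the image of the constraint cone closed; concluding $\vartheta_1=\vartheta_1^*$ and combining with Step~1 proves \eqref{theta1*}. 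For part~(2) the scheme is identical with $\overline{\Omega}\cap\Delta=\emptyset$ and the roles of infimum and supremum interchanged; the one extra subtlety is that there an $\Omega$-avoiding set $C$ may be infinite of positive $\mu$-measure, so the rank-one kernel $\mu(C)^{-1}\,\1_C\otimes\1_C$ used to obtain $A(S^{n-1},\Omega,\mu)\le\vartheta_2$ is only measurable — I would approximate $\1_C$ in $L^2(\mu)$ by continuous functions (mollifying on the sphere) and pass to the limit, using regularity of $\mu$. I expect the genuine difficulty to be exactly this gap-closing step — producing the strictly feasible solution and verifying the weak-$*$ closedness demanded by infinite-dimensional conic duality — rather than the algebraic set-up.
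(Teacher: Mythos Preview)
Your overall architecture --- primal/dual pair plus a strong-duality criterion --- matches the paper, and your Step~1 argument for part~(1) is correct. The paper, however, takes the mirror-image route on the inequalities: rather than bounding $|C|$ against the primal $\vartheta_1$ via the matrix $\big(F(x,y)\big)_{x,y\in C}$ as you do, it exhibits $\lambda=|C|^{-1}\delta_{C^2}$ as a feasible solution of the \emph{dual} $\vartheta_1^*$ with value $|C|$, and for part~(2) it takes an arbitrary feasible $(t,\lambda)$ of $\vartheta_2^*$ and reads off $0\le\lambda(C^2)\le t\mu(C)-\mu(C)^2$. For the gap-closing step the paper simply cites Barvinok's criterion \cite[Theorem~7.2]{Barvinok}, which is exactly the infinite-dimensional conic-duality theorem you gesture at.

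Where your proposal is genuinely weaker than the paper is the approximation in part~(2). Mollifying $\mathbf{1}_C$ enlarges its support to a neighbourhood of $C$, and the product $g(x)g(y)$ will in general fail the hard constraint $F(x,y)=0$ on $\Omega$ --- e.g.\ for $\Omega=\{(x,y):d_\theta(x,y)=\theta\}$, two points in a small neighbourhood of $C$ can certainly be at angular distance exactly $\theta$ even if no two points of $C$ are. So the approximants are not feasible for $\vartheta_2$, and passing to the limit does not obviously give a lower bound on the supremum. The paper's choice to argue on the dual side sidesteps this: one only needs $\lambda(C\times C)\ge 0$ for a positive-semidefinite measure $\lambda$, which follows by approximating $\mathbf{1}_C$ in $L^2$ and using that rank-one kernels $g\otimes\bar g$ are positive definite --- there is no equality constraint to preserve. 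Either switch your part~(2) bound to the dual program as the paper does, or replace the naive mollification by an argument that controls the violation of the $\Omega$-constraint and shows it is asymptotically negligible in the objective.
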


\begin{proof} The dual programs are computed in a standard way (see Duffin \cite{Duffin}, Barvinok \cite{Barvinok}). In
  order to prove that there is no duality gap, one can apply the
  criterion \cite[Theorem 7.2]{Barvinok}. 

For the inequality \eqref{theta1*}, let $C$ be a maximal spherical
code 
avoiding $\Omega$. Then, the measure $\lambda =\delta_{C^2}/|C|$,
where $\delta$ denotes the Dirac measure, is a feasible solution of
$\vartheta_1^*$ with optimal value $|C|=A(S^{n-1},\Omega,\mu_c)$.

For the inequality \eqref{theta2*}, let $(t,\lambda)$ be a feasible
solution of $\vartheta_2^*$. Then, if $C$ avoids $\Omega$, $C^2\subset (S^{n-1})^2\setminus
\overline{\Omega}$. Thus, $0\leq \lambda(C^2)\leq t\mu(C)-\mu(C)^2$,
leading to the wanted inequality $\mu(C)\leq t$.
\smartqed\qed
\end{proof}

\begin{myexample}
In the situations  (i) and (ii) above, the
set $\Omega$ fulfills the condition 1.\ of Theorem \ref{Th5} while for (iii) 
we are in the case 2.
\end{myexample}

\subsection{Positive definite functions invariant under the full orthogonal
group}\label{Pdf full} It is a classical result of Schoenberg \cite{Schoenberg} that
these functions, in the variables $(x,y)\in (S^{n-1})^2$, are exactly the nonnegative linear combinations of {\em Gegenbauer
  polynomials} (Section \ref{Ex: commutative}) evaluated at the inner product $x\cdot y$. 
We briefly review this classical result, following
the lines of Section \ref{Representation theory}.

The space $\Hom_k^n$ of polynomials in $n$ variables $x_1,\dots,x_n$ which are
homogeneous of degree $k$ affords an action of the group $O_n(\R)$
acting linearly on the $n$ variables. The Laplace operator
$\Delta=\sum_{i=1}^n \frac{\partial^2}{\partial x_i^2}$ commutes with
this action, thus the subspace $\Harm_k^n$ defined by:
\begin{equation*}
\Harm_k^n =\{P\in \Hom_k^n : \Delta P=0\}
\end{equation*}
is a representation of $O_n(\R)$ which turns out to be irreducible (see e.g. Andrews, Askey, Roy \cite{AndrewsAskeyRoy}).
Going from polynomials to polynomial functions on the sphere, we
introduce $H_k^n$, the subspace of $\CC(S^{n-1})$ arising from elements
 of $\Harm_k^n$. Then, for $d\geq k$, $H_k^n$ is a subspace of the space
 $V_d =\Pold(S^{n-1})$ of polynomial functions on $S^{n-1}$ of degree up to $d$.
We have:
\begin{mytheorem} The irreducible $O_n(\R)$-decomposition of $\Pold(S^{n-1})$ is
  given by:
\begin{equation}\label{dec1}
\Pold(S^{n-1})=H_0^n\perp H_1^n\perp\dots \perp H_d^n,
\end{equation}
where, for all $k\geq 0$, $H_k^n\simeq \Harm_k^n$ is of dimension $h_k^n =\binom{n+k-1}{k}-\binom{n+k-3}{k-2}$.

The function
$Y_k(d_{\theta}(x,y))$ associated to $H_k^n$ following \eqref{pdf 2-pt hom}
equals:
\begin{equation}\label{zonal1}
Y_k(d_{\theta}(x,y))=h_k^n P_k^{((n-3)/2, (n-3)/2)}(x\cdot y).
\end{equation}
\end{mytheorem}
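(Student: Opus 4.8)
The plan is to prove the two assertions in turn, taking for granted the cited facts about spherical harmonics: the irreducibility of $\Harm_k^n$ as an $O_n(\R)$-module, and the Fischer (``separation of variables'') decomposition $\Hom_k^n=\Harm_k^n\oplus\|x\|^2\Hom_{k-2}^n$.

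First, the decomposition \eqref{dec1}. Iterating Fischer gives $\Hom_k^n=\bigoplus_{0\le j\le k/2}\|x\|^{2j}\Harm_{k-2j}^n$. Restricting polynomial functions to $\Sn$, where $\|x\|^2\equiv 1$, the factors $\|x\|^{2j}$ drop out, so $\Hom_k^n$ restricts onto $\sum_{0\le k'\le k,\,k'\equiv k\,(2)}H_{k'}^n$. A nonzero homogeneous polynomial cannot vanish on $\Sn$ (by homogeneity it would then vanish on $\R^n\setminus\{0\}$), so restriction is injective on $\Harm_k^n$ and $H_k^n\simeq\Harm_k^n$ as $O_n(\R)$-modules; hence $\dim H_k^n=\dim\Hom_k^n-\dim\Hom_{k-2}^n=\binom{n+k-1}{k}-\binom{n+k-3}{k-2}=h_k^n$. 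Next, any $f\in\Pold(\Sn)$ is the restriction of a polynomial of degree $\le d$; writing that polynomial as a sum of homogeneous components and multiplying each component by a suitable power of $\|x\|^2=1$ shows $f\in(\Hom_d^n+\Hom_{d-1}^n)|_{\Sn}$, so $\Pold(\Sn)=\sum_{k=0}^d H_k^n$. Finally, the $\Harm_k^n$ are irreducible and pairwise non-isomorphic (already because the $h_k^n$ are distinct for $n\ge 2$), so the $H_k^n$ are the distinct isotypic components of $\Pold(\Sn)$ and are mutually orthogonal with respect to the invariant inner product; this gives \eqref{dec1}.

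For the zonal function \eqref{zonal1}: since $\Pold(\Sn)$ is multiplicity-free, the function $Y_k$ attached to $H_k^n$ in Section~\ref{Representation theory} (cf.\ \eqref{Ek} and \eqref{pdf 2-pt hom}) is the reproducing kernel $E_k(x,y)=\sum_{s=1}^{h_k^n}e_{k,s}(x)\overline{e_{k,s}(y)}$ of $H_k^n$, for $(e_{k,s})_s$ an orthonormal basis of $H_k^n$. The reproducing kernel is independent of the orthonormal basis, and since $O_n(\R)$ preserves $H_k^n$ and acts unitarily, $E_k(gx,gy)=E_k(x,y)$. As $(\Sn,d_{\theta})$ is two-point homogeneous and $E_k$ is a polynomial function on $\Sn\times\Sn$, we may write $E_k(x,y)=Z_k(x\cdot y)$ for a one-variable polynomial $Z_k$. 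One has $\deg Z_k\le k$, and $\deg Z_k<k$ is excluded because then $E_k(\cdot,y)$, which lies in $H_k^n$, would also lie in $\operatorname{Pol}_{\leq k-1}(\Sn)=H_0^n\oplus\cdots\oplus H_{k-1}^n$, which is orthogonal to $H_k^n$, forcing $E_k\equiv 0$ against $\int_{\Sn}E_k(y,y)\,d\mu(y)=\sum_s\|e_{k,s}\|^2=h_k^n>0$; hence $\deg Z_k=k$. Moreover $E_k(y,y)$ is constant in $y$ (all pairs $(y,y)$ lie in one orbit), so it equals that integral, giving $Z_k(1)=h_k^n$.

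To identify $Z_k$, fix $x_0\in\Sn$. For $k\ne k'$ the functions $x\mapsto E_k(x,x_0)\in H_k^n$ and $x\mapsto E_{k'}(x,x_0)\in H_{k'}^n$ are orthogonal by the decomposition just established, while a short computation with the orthonormal basis gives $\langle E_k(\cdot,x_0),E_k(\cdot,x_0)\rangle=E_k(x_0,x_0)=h_k^n$. Since $E_k(x,x_0)=Z_k(x\cdot x_0)$, this reads
\begin{equation*}
\int_{\Sn}Z_k(x\cdot x_0)\,\overline{Z_{k'}(x\cdot x_0)}\,d\mu(x)=\delta_{kk'}\,h_k^n .
\end{equation*}
The pushforward of $\mu$ under $x\mapsto x\cdot x_0$ is, by the standard slicing computation on $\Sn$ (the level set $\{x\cdot x_0=t\}$ is a $(1-t^2)^{1/2}$-scaled copy of $\Snm$, with a coarea factor contributing an extra $(1-t^2)^{-1/2}$), proportional to $(1-t^2)^{(n-3)/2}\,dt=(1-t)^{(n-3)/2}(1+t)^{(n-3)/2}\,dt$ on $[-1,1]$. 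Hence $Z_0,Z_1,\dots$ are, up to scalars, the orthogonal polynomials for the Jacobi weight with $\alpha=\beta=(n-3)/2$; as $\deg Z_k=k$, uniqueness forces $Z_k=\lambda_k P_k^{((n-3)/2,(n-3)/2)}$, and evaluating at $t=1$ with $P_k^{((n-3)/2,(n-3)/2)}(1)=1$ and $Z_k(1)=h_k^n$ yields $\lambda_k=h_k^n$, which is \eqref{zonal1}. The conceptual input — Fischer's decomposition and the irreducibility of $\Harm_k^n$ — is quoted, so the main work is organizational: identifying the abstract $Y_k$ of Section~\ref{Representation theory} with the explicit reproducing kernel $E_k$, pinning down $\deg Z_k=k$, and carrying out the pushforward-of-measure computation that produces the weight $(1-t^2)^{(n-3)/2}$; with these in hand, uniqueness of orthogonal polynomials and the normalization at $t=1$ complete the argument.
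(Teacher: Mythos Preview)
Your argument for the zonal function \eqref{zonal1} follows the paper's proof closely: both identify $Y_k$ as a degree-$k$ polynomial in $t=x\cdot y$, reduce the pairwise orthogonality of the $H_k^n$ to orthogonality against the weight $(1-t^2)^{(n-3)/2}$ on $[-1,1]$, and fix the normalization via $Z_k(1)=h_k^n$ (the paper writes this as $Y_k(0)=h_k^n$, since $d_\theta(x,x)=0$).

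For the decomposition \eqref{dec1} your route is different. You import the Fischer decomposition and the irreducibility of $\Harm_k^n$ as cited facts, then argue that the $H_k^n$ are pairwise non-isomorphic isotypic components and hence orthogonal. The paper runs the logic in the other direction: since $S^{n-1}$ is two-point homogeneous, the invariant algebra $(V_d^{(2)})^{O_n(\R)}$ consists of polynomials in $x\cdot y$ of degree $\leq d$ and so has dimension exactly $d+1$; via the isomorphism $\End^{O_n(\R)}(V_d)\simeq (V_d^{(2)})^{O_n(\R)}$ of \eqref{isom}, this forces the $d+1$ nonzero subspaces $H_0^n,\dots,H_d^n$ to be irreducible and pairwise non-isomorphic, without assuming irreducibility of $\Harm_k^n$ in advance. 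Your approach is the classical direct one; the paper's is a neat application of the commutant machinery developed earlier, which recovers irreducibility as a by-product.

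One small gap: your parenthetical ``already because the $h_k^n$ are distinct for $n\geq 2$'' fails when $n=2$, since $h_k^2=2$ for every $k\geq 1$. The spaces $H_k^2$ are still pairwise non-isomorphic $O_2(\R)$-modules (rotation by $\theta$ acts on $H_k^2$ as rotation by $k\theta$), but you need a different justification in that case---either cite the non-isomorphism directly, or bypass the issue by proving orthogonality of $H_k^n$ and $H_{k'}^n$ via Green's identity on the sphere, which does not require any representation-theoretic comparison.
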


\begin{proof} For the sake of completeness we sketch a proof. Because $S^{n-1}$ is two-point homogeneous under
  $O_n(\R)$, the algebra $(V_d)^{(2)}$ related to the finite dimensional
  $O_n(\R)$-space $V_d$ is equal to
the set of polynomial functions in the
  variable $t =x\cdot y$ of degree up to $d$, thus has dimension
  $d+1$. On the other hand, it can be shown that the $d+1$ subspaces $H_k^n$, for
  $0\leq k\leq d$, are non
  zero and pairwise distinct. Then, the statement \eqref{dec1} 
  follows from the equality of the dimensions of  $(V_d)^{(2)}$ and of
  $\End^{O_n(\R)}(V_d)$ which are isomorphic algebras \eqref{isom}.
 It is worth to notice that we end up
  with a proof that the spaces $H_k^n$ are irreducible, without
  referring to the irreducibility of the spaces $\Harm_k^n$
(which can be proved in a similar way). The formula for
  $h_k^n$ follows by induction.

The functions  $Y_k(d_{\theta(x,y)})$ associated to the decomposition
\eqref{dec1} must be polynomials in the
variable $x\cdot y$ of degree $k$; let us denote them temporary by $Q_k(t)$. A  change of
variables shows that, for a function $f(x)\in \CC(S^{n-1})$ of the form
$f(x)=\varphi(x\cdot y)$ for some $y\in S^{n-1}$,
\begin{equation*}
\int_{S^{n-1}} f(x)d\mu(x)=c_n\int_{-1}^1 \varphi(t)(1-t^2)^{(n-3)/2}dt
\end{equation*}
for some constant $c_n$.
Then, because the subspaces $H_k^n$ are pairwise orthogonal, the polynomials $Q_k$ must satisfy the orthogonality conditions
\begin{equation*}
\int_{-1}^1 Q_k(t)Q_l(t)(1-t^2)^{(n-3)/2}dt=0
\end{equation*}
for all $k\neq l$ thus they must be equal to the Gegenbauer
polynomials up to a multiplicative factor. Integrating over
$S^{n-1}$ the 
formula \eqref{Ek} when $x=y$ shows that $Y_k(0)=h_k^n$ thus computes this factor.
\smartqed\qed
\end{proof}

\begin{mycorollary}  Let $P_k^n(t) =P_k^{((n-3)/2,
    (n-3)/2)}(t)$. Then, $F\in \CC(S^{n-1})^{O_n(\R)}_{\succeq 0}$ if and
  only if
\begin{equation}\label{Pdf sphere}
F(x,y)=\sum_{k\geq 0} f_k P_k^n(x\cdot y), \text{ with } f_k\geq 0 \text{ for all
}k\geq 0.
\end{equation}
\end{mycorollary}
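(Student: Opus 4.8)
The plan is to combine the irreducible decomposition \eqref{dec1}, the commutative specialization \eqref{pdf sym} of Theorem~\ref{Th3}, the identification \eqref{zonal1} of the zonal functions, and the convergence statement in the second item of the remark following Theorem~\ref{Th3}. Since $S^{n-1}$ is two-point homogeneous under $O_n(\R)$ (Section~\ref{Ex: commutative}), the $O_n(\R)$-space $\CC(S^{n-1})$ is multiplicity-free: this is exactly what \eqref{dec1} says, each isotypic component $H_k^n$ occurring with multiplicity $m_k=1$. Hence for every finite-dimensional $O_n(\R)$-subspace $V$ of $\CC(S^{n-1})$ the algebra $(V^{(2)})^{O_n(\R)}$ is commutative and Theorem~\ref{Th3} applies in the form \eqref{pdf sym}. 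Moreover, since $m_k=1$, formula \eqref{Ek} reduces to the scalar $E_k(x,y)=\sum_{s=1}^{d_k} e_{k,1,s}(x)\overline{e_{k,1,s}(y)}$, and comparing \eqref{pdf sym} with \eqref{pdf 2-pt hom} together with \eqref{zonal1} gives $E_k(x,y)=Y_k(d_{\theta}(x,y))=h_k^n P_k^n(x\cdot y)$.

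For the ``if'' direction, fix $k$. By the previous paragraph, $h_k^n P_k^n(x\cdot y)=\sum_{s=1}^{d_k} e_{k,1,s}(x)\overline{e_{k,1,s}(y)}$, and for any finite set $x_1,\dots,x_N\in S^{n-1}$ the matrix $\big(\sum_s e_{k,1,s}(x_i)\overline{e_{k,1,s}(x_j)}\big)_{i,j}$ equals $AA^*$ with $A_{is}=e_{k,1,s}(x_i)$, hence is Hermitian positive semidefinite; thus each kernel $P_k^n(x\cdot y)$ is positive definite, and each $P_k^n(x\cdot y)$ is real and symmetric in $(x,y)$. A nonnegative linear combination of positive definite kernels is positive definite, and the defining inequalities of positive definiteness together with the Hermitian-symmetry condition are closed under pointwise limits; so if $f_k\geq 0$ and $\sum_{k\geq 0} f_k P_k^n(x\cdot y)$ converges to a continuous function $F$, then $F\in\CC(S^{n-1})^{O_n(\R)}_{\succeq 0}$.

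For the ``only if'' direction, let $F\in\CC(S^{n-1})^{O_n(\R)}_{\succeq 0}$; by two-point homogeneity $F(x,y)=\varphi(x\cdot y)$ for a continuous $\varphi$ on $[-1,1]$, real because $F(x,y)=F(y,x)$. Applying Theorem~\ref{Th3} to $V=V_d=\Pold(S^{n-1})$ and using \eqref{pdf sym}, the $L^2$-orthogonal projection of $F$ onto $V_d^{(2)}$ has the form $\sum_{k=0}^{d} g_k E_k(x,y)$ with $g_k\geq 0$ (the nonnegativity being the inequality in the proof of Theorem~\ref{Th3}, which for $m_k=1$ reads $g_k|\alpha_1|^2\geq 0$), and by orthogonality of the $H_k^n$ the coefficient $g_k$ does not depend on $d$ once $d\geq k$. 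Finally, because $O_n(\R)$ acts transitively on $S^{n-1}$, the second item of the remark following Theorem~\ref{Th3} (Bochner's theorem \cite{Bochner}) gives that these truncations converge to $F$ uniformly, so $F(x,y)=\sum_{k\geq 0} g_k E_k(x,y)=\sum_{k\geq 0} (g_k h_k^n) P_k^n(x\cdot y)$; setting $f_k:=g_k h_k^n\geq 0$ yields \eqref{Pdf sphere}.

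The only genuine obstacle is the passage from the finite-dimensional spaces $V_d^{(2)}$, where Theorem~\ref{Th3} is a formal statement, to an arbitrary continuous invariant kernel on $(S^{n-1})^2$: this requires controlling the convergence of the zonal (Gegenbauer) expansion, which is not a formal consequence of the finite-dimensional theory but is supplied by the cited theorem of Bochner under the transitivity of the $O_n(\R)$-action. Everything else is bookkeeping with \eqref{Ek}, \eqref{zonal1}, and the closedness of the positive-definiteness conditions.
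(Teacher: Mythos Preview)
Your proposal is correct and follows exactly the route the paper has set up: the corollary is stated without proof, but the ingredients you invoke---the multiplicity-free decomposition \eqref{dec1}, Theorem~\ref{Th3} in its commutative form \eqref{pdf sym}, the zonal identification \eqref{zonal1}, and Bochner's uniform convergence from the remark after Theorem~\ref{Th3}---are precisely those the paper assembles in the preceding pages for this purpose. One small phrasing point: when you write ``the $L^2$-orthogonal projection of $F$ onto $V_d^{(2)}$ has the form $\sum_{k=0}^d g_kE_k(x,y)$ with $g_k\geq 0$'', the nonnegativity does not come from Theorem~\ref{Th3} applied to the projection (which need not itself be positive definite) but, as you then correctly note, from the integral inequality in its proof applied directly to the original $F$; it would be cleaner to define $g_k$ by that integral from the outset and skip the appeal to the projection.
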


\subsection{Positive definite functions invariant under  the
  stabilizer of one point}\label{Pdf stab}
We fix an element $e\in S^{n-1}$ and let $G=\Stab(e,O_n(\R))$. Then,
the orbit $O(x)$  of $x\in S^{n-1}$ under $G$ is the set
\begin{equation*}
O(x)=\{y\in S^{n-1} : e\cdot y=e\cdot x\}.
\end{equation*}
The orbit $O((x,y))$  of $(x,y)\in S^{n-1}$ equals
\begin{equation*}
O((x,y))=\{(z,t)\in (S^{n-1})^2 : (e\cdot  z, e\cdot t, z\cdot t)=(e\cdot  x, e\cdot y, x\cdot y)\}.
\end{equation*}
In other words, the orbits of $G$ on $(S^{n-1})^2$ are parametrized by
the triple of real
numbers $(e\cdot  x, e\cdot y, x\cdot y)$. So
the
$G$-invariant positive definite functions on $S^{n-1}$ are functions
of the three variables $u=e\cdot x$, $v=e\cdot y$, $t=x\cdot y$. Their
expression is computed in Bachoc, Vallentin \cite{BachocVallentin1}.

\begin{mytheorem}\label{Th dec2} The irreducible decomposition of $V_d=\Pold(S^{n-1})$
  under the action of $G =\Stab(e,S^{n-1})\simeq O_{n-1}(\R)$ is given by:

\begin{equation}\label{dec2}
\Pold(S^{n-1})\simeq \bigoplus_{k=0}^d  (\Harm_k^{n-1})^{d-k+1}.
\end{equation}
The coefficients of the matrix $Y_k=Y_k^n$ of size $d-k+1$ associated to $\Harm_k^{n-1}$
are equal to:
\begin{equation}\label{Pdf 2}
Y^n_{k,ij}(u,v,t)= u^iv^jQ_k^{n-1}(u,v,t)
\end{equation}
where $0\leq i,j\leq d-k$, and
\begin{equation*}
Q_k^{n-1}(u,v,t) =\big((1-u^2)(1-v^2)\big)^{k/2}P_k^{n-1}\Big(\frac{t-uv}{\sqrt{(1-u^2)(1-v^2)}}\Big)
\end{equation*}
\end{mytheorem}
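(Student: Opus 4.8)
The plan is to work in coordinates adapted to the fixed point $e$, to exhibit explicit copies of $\Harm_k^{n-1}$ sitting inside $\Pold(\Sn)$, and then to feed the resulting decomposition into the general recipe of Section~\ref{Representation theory}, i.e.\ formula~\eqref{Ek}. Normalize $e=e_n$ and write $x=(x',x_n)\in\R^{n-1}\times\R$ for $x\in\Sn$, so that $u:=e\cdot x=x_n$ and $|x'|^2=1-u^2$; for $u\neq\pm1$ set $\xi:=x'/|x'|\in\Snm$. The group $G\simeq O_{n-1}(\R)$ acts linearly on the $x'$-block and fixes $x_n$. For $k\geq0$ and $0\leq i\leq d-k$, view a harmonic $Y\in\Harm_k^{n-1}$ as a degree-$k$ polynomial in $x'$ and let
\[
W_{k,i}:=\{\, x\mapsto x_n^{\,i}\,Y(x') \;:\; Y\in\Harm_k^{n-1}\,\}
\]
be the corresponding space of functions on $\Sn$; then $W_{k,i}\subseteq\Pold(\Sn)$, since $x_n^{\,i}Y(x')$ has degree $i+k\leq d$. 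Because multiplication by $x_n^{\,i}$ commutes with the $G$-action, $Y\mapsto x_n^{\,i}Y(x')$ is a $G$-homomorphism onto $W_{k,i}$; it is injective (on $\Sn$ the coordinate $x_n$ is not identically zero), so $W_{k,i}\simeq\Harm_k^{n-1}$, which is $G$-irreducible by the classical fact recalled in the proof of~\eqref{dec1}, now applied in dimension $n-1$ (see Andrews, Askey, Roy \cite{AndrewsAskeyRoy}).

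The first main step is to prove $\Pold(\Sn)=\bigoplus_{k=0}^{d}\bigoplus_{i=0}^{d-k}W_{k,i}$. For spanning, write a monomial of degree $\leq d$ as $x_n^{\,a}m(x')$ with $a+\deg m\leq d$, decompose the polynomial $m$ into harmonics times powers of $|x'|^2$, and use $|x'|^2=1-x_n^2$ on $\Sn$; this expresses the monomial as a linear combination of elements $x_n^{\,b}Y(x')$ with $Y\in\Harm_k^{n-1}$ and $b\leq d-k$. For directness I would use a slice argument: if $\sum_{k,i}x_n^{\,i}Y_{k,i}(x')=0$ on $\Sn$ with $Y_{k,i}\in\Harm_k^{n-1}$, fix $u\in(-1,1)$ and restrict to the slice $\{x_n=u\}\cap\Sn\cong\Snm$; homogeneity gives $Y_{k,i}(x')=(1-u^2)^{k/2}Y_{k,i}(\xi)$, so $\sum_{k}(1-u^2)^{k/2}\big(\sum_i u^i Y_{k,i}\big)=0$ in $\CC(\Snm)$. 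Since the spaces $H_k^{n-1}\subseteq\CC(\Snm)$ are pairwise orthogonal by~\eqref{dec1} and $1-u^2\neq0$, each $\sum_i u^i Y_{k,i}$ vanishes for every $u\in(-1,1)$, hence $Y_{k,i}=0$ for all $i$ by comparing coefficients of the polynomial $u\mapsto\sum_i u^iY_{k,i}$ with values in the finite-dimensional space $\Harm_k^{n-1}$. This proves~\eqref{dec2}, with the isotypic component of $\Harm_k^{n-1}$ equal to $\bigoplus_{i=0}^{d-k}W_{k,i}$ and multiplicity $m_k=d-k+1$. (Equivalently, directness follows from a dimension count once one knows $h_k^n=\sum_{j=0}^{k}h_j^{n-1}$, which is the branching rule $O_n(\R)\downarrow O_{n-1}(\R)$.)

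The second step is to read off $Y_k^n=E_k$ from~\eqref{Ek} using this decomposition. Pick $Y_1,\dots,Y_{h}\in\Harm_k^{n-1}$ ($h=h_k^{n-1}$) whose restrictions to $\Snm$ form an orthonormal basis of $H_k^{n-1}$. The families $\{x_n^{\,i}Y_s(x')\}_s$ (after the $i$-dependent rescaling that makes each of them orthonormal in $W_{k,i}$, which changes $E_k$ only by a diagonal conjugation $E_k\mapsto AE_kA^{*}$, $A\in\Gl_{m_k}(\C)$, as allowed by comment~(3) following Theorem~\ref{Th3}) are an admissible choice of bases, and~\eqref{Ek} gives
\[
E_{k,ij}(x,y)=\sum_{s}x_n^{\,i}Y_s(x')\,\overline{y_n^{\,j}Y_s(y')}=u^iv^j\sum_{s}Y_s(x')\overline{Y_s(y')}.
\]
By homogeneity $Y_s(x')=(1-u^2)^{k/2}Y_s(\xi)$ and $Y_s(y')=(1-v^2)^{k/2}Y_s(\eta)$ on $\Sn$, and the addition theorem on $\Snm$ (formula~\eqref{zonal1} in dimension $n-1$) yields $\sum_s Y_s(\xi)\overline{Y_s(\eta)}=h_k^{n-1}P_k^{n-1}(\xi\cdot\eta)$. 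Finally the elementary identity
\[
\xi\cdot\eta=\frac{x'\cdot y'}{|x'|\,|y'|}=\frac{x\cdot y-x_ny_n}{\sqrt{(1-u^2)(1-v^2)}}=\frac{t-uv}{\sqrt{(1-u^2)(1-v^2)}},\qquad t=x\cdot y,
\]
turns the previous display into $E_{k,ij}(x,y)=h_k^{n-1}\,u^iv^j\,Q_k^{n-1}(u,v,t)$, which is~\eqref{Pdf 2} up to the irrelevant positive scalar $h_k^{n-1}$ and the diagonal rescaling above.

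I expect the decomposition step to be the main obstacle: it is essentially the $O_n(\R)\downarrow O_{n-1}(\R)$ branching rule for spherical harmonics, and the slice argument (equivalently, the identity $h_k^n=\sum_{j=0}^k h_j^{n-1}$) is where the real content lies. Once~\eqref{dec2} is in hand together with the explicit realization by the $W_{k,i}$, the derivation of~\eqref{Pdf 2} is a mechanical application of~\eqref{Ek}, the addition theorem, and the angle identity above; the only fiddly point — keeping track of the normalization constants — is harmless thanks to comment~(3) following Theorem~\ref{Th3}.
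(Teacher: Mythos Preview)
Your proposal is correct and essentially the same as the paper's: your embeddings $Y\mapsto x_n^{\,i}Y(x')$ coincide, by homogeneity of $Y$, with the paper's maps $\varphi_{k,i}(f)(x)=u^i(1-u^2)^{k/2}f(\zeta)$, and your computation of \eqref{Pdf 2} from \eqref{Ek} together with the addition theorem \eqref{zonal1} in dimension $n-1$ is then identical to the paper's. The only minor difference is that the paper obtains \eqref{dec2} by applying the classical branching rule $\Harm_k^n\simeq\bigoplus_{i=0}^k\Harm_i^{n-1}$ (cited from \cite{VilenkinKlimyk}) to the decomposition \eqref{dec1}, whereas you supply the equivalent self-contained spanning-plus-slice argument---an equivalence you yourself note.
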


\begin{proof} We refer to Bachoc, Vallentin \cite{BachocVallentin1} for the details. In order to
  obtain the $G$-decomposition of the space $\Pold(S^{n-1})$ we can
  start from the $O_n(\R)$-decomposition
\eqref{dec1}. The $O_n(\R)$-irreducible subspaces $H_k^n$ split into
smaller subspaces following the $O_{n-1}(\R)$-iso\-mor\-phisms (see e.g. Vilenkin, Klimyk \cite{VilenkinKlimyk}):
\begin{equation}
\Harm_k^n\simeq \bigoplus_{i=0}^k \Harm_i^{n-1}
\end{equation}
leading to \eqref{dec2}.
The expression \eqref{Pdf 2} follows from the definition \eqref{Ek} of
$E_{k,ij}(x,y)$
and from  the construction of
convenient basis $(e_{k,i,s})_s$. These basis arise   from explicit
embeddings of $H_{k}^{n-1}$ into $V_d$, defined as follows. For $x\in S^{n-1}$, let
$x=ue+\sqrt{1-u^2}\zeta$ where $\zeta\in (\R e)^{\perp}$ and
$\zeta\cdot \zeta=1$. Let $\varphi_{k,i}$ send $f\in H_k^{n-1}$ to
$\varphi_{k,i}(f)$  where
\begin{equation*}
\varphi_{k,i}(f)(x) =u^i (1-u^2)^{k/2} f(\zeta).
\end{equation*}
Then,  $\{e_{k,i,s} :  1\leq s\leq h_k^{n-1}\}$ is taken to be the image by
$\varphi_{k,i}$ of an
orthonormal basis of $H_k^{n-1}$.
\smartqed\qed
\end{proof}

\subsection{Reduction using symmetries}\label{Reduction}
If the set $\Omega$ is invariant under a subgroup $G$ of $O_n(\R)$,
then the feasible sets of the conic linear programs \eqref{theta1} and
\eqref{theta2} 
can be restricted to the $G$-invariant positive definite functions
$F$. Indeed, symmetry reduction of finite dimensional semidefinite programs extends to
infinite compact spaces, with the now familiar trick that replaces
the finite average over the elements of a finite group by the integral for the Haar measure of the group.
We apply this general principle to the
programs \eqref{theta1} and \eqref{theta2} and we focus on the cases
(i)--(iii) above.

\subsubsection{Case (i): $\Omega=\{(x,y) : d_{\theta}(x,y)\in
]0,\theta_{\min}[\}$.} 
The set $\Omega$ is invariant under $O_n(\R)$. According to Theorem \ref{Th5} we consider the
program \eqref{theta1}  where $F\succeq 0$ can be replaced by
\eqref{Pdf sphere}. The program $\vartheta_1$ becomes after a few simplifications:

\begin{equation}\label{Delsarte}
\begin{array}{ll}
\vartheta_1(S^{n-1}, \Omega) =\displaystyle \inf\big\{\  1+\sum_{k\geq 1} f_k :
&f_k\geq 0,  \\
&\displaystyle  1+\sum_{k\geq 1} f_k P_k^n(t) \leq 0  \quad t \in
         [-1,s] \big\},
\end{array}
\end{equation}
where $s =\cos(\theta_{\min})$. One recognises  in \eqref{Delsarte} the so-called {\em
  Delsarte linear programming bound} for the maximal number of
elements of a spherical code with minimal angular distance
$\theta_{\min}$, see Delsarte, Goethals, Seidel \cite{DelsarteGoethalsSeidel}, Kabatiansky, Levenshtein \cite{KabatianskyLevenshtein}, Conway, Sloane \cite[Chapter 9]{ConwaySloane}. The optimal
value of the linear program \eqref{Delsarte} is not known in general, although
explicit feasible solutions leading to very good bounds and also to
asymptotic bounds have been
constructed (Kabatiansky, Levenshtein \cite{KabatianskyLevenshtein}, Levenshtein \cite{Levenshtein}, Odlyzko, Sloane \cite{OdlyzkoSloane}). Moreover, this infinite dimensional linear program can be
efficiently approximated by semidefinite programs defined in the
following way: 
in order to deal
with only a finite number of variables $f_k$, one restricts to $k\leq
d$ (it amounts to restrict to $G$-invariant positive definite
functions of $V_d^{(2)}$). Then the polynomial $1+\sum_{k=1}^d
f_kP_k^n$ is required in \eqref{Delsarte} to be 
nonpositive over a certain  interval of real
numbers. By the theorem of Luk\'acs concerning non-negative polynomials (see e.g.\ Szeg\"o \cite[Chapter 1.21]{Szego}, this can be expressed as a sums of square condition, hence as a semidefinite program.
Then, when $d$ varies,  one obtains a sequence of semidefinite programs 
approaching $\vartheta_1(S^{n-1}, \Omega)$ from above.

\subsubsection{Case (ii): $\Omega=\{(x,y) : d_{\theta}(x,y)=\theta\}$.} 
The set $\Omega$ is again invariant under $O_n(\R)$ but now we deal
with \eqref{theta2}, which becomes:
\begin{equation}\label{Avoid theta}
\begin{array}{ll}
\vartheta_2(S^{n-1}, \Omega) =\displaystyle \sup\big\{\  f_0 :
\ f_k\geq 0,  & \displaystyle \sum_{k\geq 0} f_k =1,\\
&\displaystyle  \sum_{k\geq 0} f_k P_k^n(s) =0  \big\}
\end{array}
\end{equation}
where $s =\cos(\theta)$.
This linear program has infinitely many variables but only two
constraints. Its optimal value turns to be easy to determine
(we refer to Bachoc et. al. \cite{BachocNebeOliveiraVallentin} for a proof). 

\begin{mytheorem}
 Let $m(s)$ be the minimum of $P_k^n(s)$ for
  $k=0,1,2,\dots$. Then 
\begin{equation*}
\vartheta_2(S^{n-1}, \Omega)=\frac{m(s)}{m(s)-1}.
\end{equation*}
\end{mytheorem}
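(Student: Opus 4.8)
The plan is to read \eqref{Avoid theta} as a linear program in the countably many nonnegative variables $f_0, f_1, f_2, \dots$ subject to the two linear equalities $\sum_k f_k = 1$ and $\sum_k f_k P_k^n(s) = 0$, and to pin down the value $\frac{m(s)}{m(s)-1}$ by establishing matching upper and lower bounds on the objective $f_0$. Throughout I would write $p_k = P_k^n(s)$; the normalization $P_k^{(\alpha,\beta)}(1)=1$ used in the excerpt gives $p_0 = 1$.

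First I would record the preliminary facts about the sequence $(p_k)$, referring to Bachoc et al.\ \cite{BachocNebeOliveiraVallentin}: it is bounded, and its minimum $m(s) = \min_k p_k$ is strictly negative and is attained at some finite index $k^*$. Since $p_0 = 1 > 0$ we must have $k^*\geq 1$, and both numbers $\frac{m(s)}{m(s)-1}$ and $\frac{1}{1-m(s)}$ are then strictly positive.

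For the upper bound, let $(f_k)$ be any feasible point. The equality constraint together with $p_0=1$ gives $f_0 = -\sum_{k\geq 1} f_k p_k$. Since $p_k\geq m(s)$ and $f_k\geq 0$, each summand obeys $-f_k p_k \leq -m(s) f_k$, so summing over $k\geq 1$ and using $\sum_{k\geq 1}f_k = 1-f_0$ yields $f_0\leq -m(s)(1-f_0)$. As $1-m(s)>0$, this rearranges to $f_0\leq \frac{m(s)}{m(s)-1}$, whence $\vartheta_2(S^{n-1},\Omega)\leq \frac{m(s)}{m(s)-1}$. For the matching lower bound I would simply exhibit the feasible point $f_0 = \frac{m(s)}{m(s)-1}$, $f_{k^*} = \frac{1}{1-m(s)}$, and $f_k=0$ for all other $k$: one checks at once that $f_0, f_{k^*}>0$, that $f_0+f_{k^*}=1$, and that $f_0 \cdot 1 + f_{k^*} m(s)=0$, so this point is feasible for \eqref{Avoid theta} and attains the objective value $\frac{m(s)}{m(s)-1}$. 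Combining the two bounds gives the claimed equality.

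The computations above are entirely elementary and need no linear-programming duality; the one genuine input, deferred to \cite{BachocNebeOliveiraVallentin}, is that $m(s)$ is negative and attained at a finite index — equivalently, that \eqref{Avoid theta} is feasible with a positive objective value. This rests on the behaviour of the Gegenbauer polynomials at interior points of $(-1,1)$, where they change sign and (for $n\geq 3$) decay to $0$. With that qualitative fact in hand, the remaining argument is just the short primal construction described above, which is the only step requiring any care.
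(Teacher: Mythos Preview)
Your argument is correct: the upper bound from $f_0 = -\sum_{k\geq 1} f_k p_k \leq -m(s)(1-f_0)$ and the explicit two-term feasible solution $(f_0,f_{k^*})$ together pin down the optimum exactly, and you are right to defer the one nontrivial analytic input (that $m(s)<0$ is attained) to \cite{BachocNebeOliveiraVallentin}. The paper itself does not give a proof of this theorem at all but simply refers to \cite{BachocNebeOliveiraVallentin}, so your write-up is not a different route but rather a clean filling-in of the omitted elementary computation.
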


\subsubsection{Case (iii): $\Omega=\{(x,y) : d_{\theta}(x,y)\in
]0,\theta_{\min}[\text{ or }(x,y)\notin \Scap(e,\phi)^2\}$.} This set
is invariant by the smaller group $G=\Stab(e,O_n(\R))$. Like in case
(i), the program $\vartheta_1$ must be considered and in this program $F$ can be
assumed to be $G$-invariant. 

From Stone-Weierstrass theorem (see e.g. Conway \cite[Theorem 8.1]{Conway}), the elements of
$\CC((S^{n-1})^2)$ can be uniformly approximated by those of $V_d^{(2)}$. 
In addition, one can prove that the 
elements of $\CC((S^{n-1})^2)_{\succeq 0}$ can be uniformly approximated
by  positive definite functions belonging to
$V_d^{(2)}$.  We introduce:
\begin{equation}\label{theta1d}
\begin{array}{ll}
\vartheta_1^{(d)}(S^{n-1}, \Omega) =\inf\big\{\  t :  &F\in
\big(V_d^{(2)}\big)_{\succeq 0},\\
 & F(x,x)\leq t-1, \\
& F(x,y)\leq -1  \text{ for all }(x,y) \in \Omega^c\ \big\}.
\end{array}
\end{equation}
So, $\vartheta_1(S^{n-1}, \Omega)\leq
\vartheta_1^{(d)}(S^{n-1}, \Omega)$,  and
the limiting value of $\vartheta_1^{(d)}(S^{n-1}, \Omega)$ when $d$
goes to infinity equals $\vartheta_1(S^{n-1}, \Omega)$. 
Since $\Omega$ and $V_d$ are invariant by $G$, one can moreover assume
that $F$ is $G$-invariant. From
Theorems \ref{Th3} and \ref{Th dec2}, we have an expression for $F$:
\begin{equation*}
F(x,y)=\sum_{k=0}^d \langle F_k, Y_k(u,v,t)\rangle, \quad F_k\succeq 0,
\end{equation*}
where the matrices $F_k$ are of size $d-k+1$. Replacing in $\vartheta_1^{(d)}$ leads to:
\begin{equation*}
\begin{array}{lll}
\vartheta_1^{(d)}(S^{n-1},\Omega)=\inf\big\{  t : &F_k \succeq 0, &\\
& \displaystyle \sum_{k= 0}^d \langle  F_k, Y_k(u,u,1) \rangle\leq t\quad  &u\in [s',1],\\
& \displaystyle \sum_{k= 0}^d \langle  F_k, Y_k(u, v,t))\rangle \leq -1
\quad &s'\leq u\leq v\leq 1\\
&& -1\leq t\leq s\big\}
\end{array} 
\end{equation*}
with $s =\cos(\theta)$ and $s' =\cos(\phi)$. The left hand sides of
the inequalities are polynomials in three variables. Again, these
constraints can be relaxed using sums of squares in order to boil down
to true semidefinite programs. We refer to Bachoc, Vallentin \cite{BachocVallentin2} for the details,
and for numerical computations of upper bounds for codes in spherical
caps
with given minimal angular distance.

\subsection{Further applications}\label{Strengthening}
In Bachoc, Vallentin \cite{BachocVallentin1} it is shown how Delsarte
linear programming bound \cite{Delsarte1} can be improved 
with semidefinite constraints arising from the matrices $Y_k^n$ \eqref{Pdf
  2}. The idea is very much the same as for the Hamming space given in Schrijver 
\cite{Schrijver2} and explained in Section \ref{Block codes} : instead of considering constraints on pairs of
points only, one exploits constraints on triples of points. More
precisely,
if $S_k^n(u,v,t)$ denotes the symmetrization of $Y_k^n(u,v,t)$ in the
variables
$(u,v,t)$, then the following semidefinite property holds for all
spherical code $C$:
\begin{equation}\label{Skn}
\sum_{(x,y,z)\in C^3} S_k^n(x\cdot  y, y\cdot z, z\cdot x)\succeq 0.
\end{equation}
From \eqref{Skn}, it is possible to define a semidefinite program
whose optimal value upper bounds the number of elements of a code with given
minimal angular distance. In Bachoc, Vallentin \cite{BachocVallentin1}, Mittelmann, Vallentin \cite{MittelmannVallentin}, new upper
bounds for the kissing number have been obtained 
for the dimensions $n\leq 24$ with this method.
We give next a simplified version of the
semidefinite program used in \cite{BachocVallentin1}. Another useful version is given in Bachoc, Vallentin \cite{BachocVallentin4} for proving that the maximal angular distance of $10$ points on $S^3$ is $\cos(1/6)$. 

\begin{mytheorem}
The optimal value of the semidefinite program:
\begin{equation}\label{SDP}
\begin{array}{lll}
\inf\big\{\  1+\langle F_0, J_{d+1}\rangle : & F_k\succeq 0 & \\
& \displaystyle \sum_{k=0}^d \langle F_k, S_k^n (u,u,1)\rangle \leq
-\frac{1}{3},\;\;&-1\leq u\leq s\\
& \displaystyle \sum_{k=0}^d \langle F_k, S_k^n (u,v,t)\rangle \leq
0,&-1\leq u,v,t\leq s\ \}\\
\end{array}
\end{equation}
is an upper bound for the  number $A(S^{n-1}, \Omega, \mu_c)$ where
$\Omega=\{(x,y)\in (S^{n-1})^2 : s< x\cdot y <1\}$, i.e. for the maximal
number
of elements of a spherical code with minimal angular distance at least
equal to $\theta_{\min}=\arccos(s)$.
\end{mytheorem}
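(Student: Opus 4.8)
The plan is to carry out the standard three-point argument (cf.\ Bachoc, Vallentin \cite{BachocVallentin1}): pair the positive semidefinite matrix inequality \eqref{Skn} with the feasibility conditions $F_k\succeq 0$ and sum over all ordered triples of code points, splitting the sum according to how many of the three points coincide. Let $C\subset S^{n-1}$ be a finite code with minimal angular distance at least $\theta_{\min}=\arccos(s)$, which means $x\cdot y\le s$ for all distinct $x,y\in C$, and let $(F_0,\dots,F_d)$ be an arbitrary feasible solution of \eqref{SDP}. By \eqref{Skn} the matrix $\sum_{(x,y,z)\in C^3}S_k^n(x\cdot y,y\cdot z,z\cdot x)$ is positive semidefinite for each $k$; since $F_k\succeq 0$ gives $\langle F_k,M\rangle\ge 0$ for every positive semidefinite $M$ of matching size, taking the trace inner product with $F_k$ and summing over $k=0,\dots,d$ yields
\begin{equation*}
0\ \le\ \sum_{(x,y,z)\in C^3}\ \sum_{k=0}^{d}\ \bigl\langle F_k,\ S_k^n(x\cdot y,\,y\cdot z,\,z\cdot x)\bigr\rangle .
\end{equation*}

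The main step is to bound the right-hand side from above by partitioning $C^3$ into three types of triples. If $x,y,z$ are pairwise distinct, then $x\cdot y,\,y\cdot z,\,z\cdot x\in[-1,s]$, so the inner sum $\sum_{k}\langle F_k,S_k^n(x\cdot y,y\cdot z,z\cdot x)\rangle$ is $\le 0$ by the second constraint of \eqref{SDP}. If exactly two of $x,y,z$ coincide --- there are $3|C|(|C|-1)$ such triples, one family for each choice of the singleton coordinate --- then one of the three inner products is $1$ and the other two equal a common value $w\in[-1,s]$, and as $S_k^n$ is symmetric in its three scalar arguments the inner sum equals $\sum_{k}\langle F_k,S_k^n(w,w,1)\rangle$, which is $\le-\tfrac{1}{3}$ by the first constraint of \eqref{SDP}. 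For the $|C|$ diagonal triples $x=y=z$ all three inner products equal $1$, and the prefactor $\bigl((1-u^2)(1-v^2)\bigr)^{k/2}$ in \eqref{Pdf 2} makes $Y_k^n(1,1,1)=0$ for $k\ge 1$ while $Y_0^n(1,1,1)=J_{d+1}$; since symmetrization does not alter the value at the symmetric point $(1,1,1)$, the inner sum equals $\langle F_0,J_{d+1}\rangle$. Using that the pairwise-distinct triples contribute at most $0$,
\begin{equation*}
0\ \le\ 3|C|(|C|-1)\cdot\Bigl(-\tfrac{1}{3}\Bigr)\ +\ |C|\cdot\langle F_0,J_{d+1}\rangle\ =\ -|C|(|C|-1)\ +\ |C|\,\langle F_0,J_{d+1}\rangle ,
\end{equation*}
so dividing by $|C|>0$ gives $|C|\le 1+\langle F_0,J_{d+1}\rangle$.

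As $(F_0,\dots,F_d)$ was an arbitrary feasible solution, $|C|$ is at most the optimal value of \eqref{SDP}. Taking $C$ to be a largest $\Omega$-avoiding subset --- for $s<1$ the compactness of the sphere makes $A(S^{n-1},\Omega,\mu_c)$ finite and attained, and for $s=1$ the first constraint of \eqref{SDP} at $u=1$ forces $\langle F_0,J_{d+1}\rangle\le-\tfrac{1}{3}$, contradicting $F_0\succeq 0$, so \eqref{SDP} is infeasible and the asserted bound is vacuous --- gives $A(S^{n-1},\Omega,\mu_c)\le 1+\langle F_0,J_{d+1}\rangle$ for every feasible $F_0$, which is the claim. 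Granting \eqref{Skn}, there is no real obstacle here; the genuinely substantial point, if one wants a self-contained proof, is \eqref{Skn} itself --- that the symmetrized matrices $S_k^n$ produce a positive semidefinite sum over all triples of code points --- which rests on the orthogonality relations for matrix coefficients of irreducible representations and the invariant-function description of Section~\ref{Representation theory}. The constants $1$ and $-\tfrac{1}{3}$ in \eqref{SDP} are calibrated exactly to absorb, respectively, the $|C|$ diagonal triples and the three families of $|C|(|C|-1)$ triples with two equal coordinates.
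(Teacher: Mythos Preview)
Your proof is correct and follows essentially the same three-point argument as the paper: pair \eqref{Skn} with $F_k\succeq 0$, split $\sum_{(x,y,z)\in C^3}$ according to the number of coincidences, and use the two feasibility constraints together with $S_0^n(1,1,1)=J_{d+1}$, $S_k^n(1,1,1)=0$ for $k\ge 1$ to obtain $0\le |C|\langle F_0,J_{d+1}\rangle - |C|(|C|-1)$. Your write-up is in fact slightly more explicit than the paper's (you count the $3|C|(|C|-1)$ two-equal triples and handle the degenerate case $s=1$), but the route is the same.
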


\begin{proof}
Let $(F_0,\dots,F_k)$ a feasible solution of \eqref{SDP}. Let 
\begin{equation*}
F(x,y,z) =\sum_{k=0}^d \langle F_k, S_k^n (x\cdot y, y\cdot z,z\cdot x)\rangle.
\end{equation*}
If $C$ is a spherical code, we consider $\Sigma =\sum_{(x,y,z)\in C^3}
F(x,y,z)$.
We have:
\begin{equation*}
0\leq \Sigma =\sum_{x\in C}
F(x,x,x)+\sum_{|\{x,y,z\}|=2}F(x,x,y) +\sum_{|\{x,y,z\}|=3}F(x,y,z)
\end{equation*}
where the inequality holds because of \eqref{Skn}. 
Then, taking $S_0^n(1,1,1)=J_{d+1}$ and $S_k^n(1,1,1)=0$
for $k\geq 1$ into account, we have $F(x,x,x)=\langle F_0, J_{d+1}\rangle$. 
If moreover  $d_{\theta}(C)\geq \theta_{\min}$, we can apply the
constraint inequalities of the program to the second and third terms
of the right hand side. We obtain:
\begin{equation*}
0\leq \Sigma \leq \langle F_0, J_{d+1}\rangle |C|-|C|(|C|-1)
\end{equation*}
leading to the inequality $|C|\leq 1+\langle F_0, J_{d+1}\rangle$.
\smartqed\qed
\end{proof}

\section{Sums of squares}
\label{Sums of squares}

A fundamental task in polynomial optimization and in real algebraic
geometry is to decide and certify whether a polynomial with real
coefficients in $n$ indeterminates can be written as a \emph{sum of
  squares}: \textit{Given $p \in \R[x_1, \ldots, x_n]$ do there exist polynomials $q_1,
\ldots, q_m \in \R[x_1, \ldots, x_n]$ so that
\begin{equation*}
p = q_1^2 + q_2^2 + \cdots + q_m^2 \; ?
\end{equation*}
}

This problem can be reformulated as a semidefinite feasibility problem: Let $z$ be a vector containing a
basis of $\R[x_1, \ldots, x_n]_{\leq d}$ the space of polynomials of
degree at most $d$. For example, let $z$ be the vector containing the
monomial basis
\begin{equation*}
z = (1, x_1, x_2, \ldots, x_n, x_1^2, x_1x_2, x_2^2, \ldots, x_n^d) 
\end{equation*}
which has length $\binom{n+d}{d}$. A polynomial $p \in \R[x_1, \ldots, x_n]$ of degree $2d$ is a sum of
square if and only if there is a positive semidefinite matrix $X$ of
size $\binom{n+d}{d} \times \binom{n+d}{d}$ so that the $\binom{n+2d}{2d}$ linear --- linear in the entries of $X$ --- equations
\begin{equation*}
p(x_1, \ldots, x_n)  = z^T X z
\end{equation*}
hold.

This semidefinite feasibility problem can be simplified if the
polynomial $p$ has symmetries. The method has been worked out by
Gatermann and Parrilo in \cite{GatermannParrilo}. In this section we give the main
ideas of the method. For details and further we refer to the
original article.

\subsection{Basics from invariant theory}

We start by explaining what we mean that a polynomial has
symmetries. Again to simplify the presentation of the theory we
consider the complex case only. 

Let $G$ be a finite group acting on $\C^n$. This group
action induces a group action on the polynomial ring $\C[x_1, \ldots,
x_n]$ by
\begin{equation*}
(gp)(x_1, \ldots, x_n) = p(g^{-1}(x_1, \ldots, x_n)),
\end{equation*}
and we say that a polynomial $p$ is $G$-invariant if $gp = p$ for all
$g \in G$. The set $\C[x_1, \ldots, x_n]^G$
of all $G$-invariant polynomials is a ring, the \emph{invariant
  ring}. By Hilbert's finiteness theorem it is generated by finitely
many $G$-invariant polynomials. Even more is true: Since the invariant
ring has the Cohen-Macaulay property it admits a Hironaka
decomposition: There are $G$-invariant polynomials $\eta_i$,
$\theta_j \in \C[x_1, \ldots, x_n]$ so that
\begin{equation}
\C[x_1, \ldots, x_n]^G = \bigoplus_{i=1}^r \eta_i \C[\theta_1, \ldots, \theta_s],
\end{equation}
hence, every invariant polynomial can be \emph{uniquely} written as a
polynomial in the polynomials $\eta_i$ and $\theta_j$ where $\eta_i$
only occurs linearly.  We refer to Sturmfels \cite[Chapter 2.3]{Sturmfels} for the
definitions and proofs; we only need the existence of a Hironaka
decomposition here.

\subsection{Sums of squares with symmetries}

We consider the action of the finite group $G$ restricted to the $\binom{n+d}{d}$-dimensional vector space of complex polynomials of degree at most $d$. This defines a unitary representation
\begin{equation*}
\pi : G \to \Gl(\C[x_1, \ldots, x_n]_{\leq d}).
\end{equation*}
From now on, by using the monomial basis, we see $\pi(g)$ as a regular matrix in $\C^{\binom{n+d}{d} \times \binom{n+d}{d}}$.

\begin{myexample}
For instance, the matrix $g^{-1} = \left(\begin{smallmatrix} 1 & 2\\ 3 & 4 \end{smallmatrix}\right)$ acts on $\C^2$ and so on the polynomial $p = 1 + x_1 + x_2 + x_1^2 + x_1 x_2 + x_2^2 \in \C[x_1, x_2]_{\leq 2}$ by
\begin{equation*}
\begin{split}
(gp)(x_1,x_2) & = p(x_1+2x_2, 3x_1 + 4x_2)\\
& =  1 + (x_1 + 2x_2) + (3x_1 + 4x_2) + (x_1+2x_2)^2 \\
& \quad + (x_1 + 2x_2)(3x_1+4x_2) + (3x_1+4x_2)^2\\
& = 1 + (x_1 + 2x_2) + (3x_1 + 4x_2) + (x_1^2 + 4x_1x_2 + 4x_2^2)\\
& \quad + (3x_1^2 + 10x_1x_2 + 8x_2^2) + (9x_1^2 + 24x_1x_2 + 16x_2^2).
\end{split}
\end{equation*}
and so defines the matrix 
\begin{equation*}
\pi(g) = 
\begin{pmatrix}
1 & 0 & 0 & 0 & 0 & 0\\
0 & 1 & 3 & 0 & 0 & 0\\
0 & 2 & 4 & 0 & 0 & 0\\ 
0 & 0 & 0 & 1 & 3 & 9\\
0 & 0 & 0 & 4 & 10 & 24\\
0 & 0 & 0 & 4 & 8 & 16
\end{pmatrix}.
\end{equation*}
\end{myexample}

Let $p \in \R[x_1, \ldots, x_n]$ be a polynomial which is a sum of squares and which is $G$-invariant. Thus we have a positive semidefinite matrix $X \in \R^{\binom{n+d}{d} \times \binom{n+d}{d}}$ so that
\begin{equation*}
p(x_1, \ldots, x_n) = z^{\sf T} X z = z^* X z,
\end{equation*}
and for every $g \in G$ we have
\begin{equation*}
gp(x_1, \ldots, x_n) = (\pi(g)^*z)^* X (\pi(g)^* z) = z^* \pi(g) X \pi(g)^* z.
\end{equation*}
Hence, $X$ is $G$-invariant and lies in $\left(\C^{\binom{n+d}{d} \times \binom{n+d}{d}}\right)^G$, the commutant algebra of the matrix $*$-algebra spanned by the matrices $\pi(g)$ with $g \in G$. So by Theorem~\ref{thm:blockdiagonal} there are numbers $D$, $m_1, \ldots, m_D$ and a $*$-isomorphism
\begin{equation*}
\varphi : \left(\C^{\binom{n+d}{d} \times \binom{n+d}{d}}\right)^G \to \bigoplus_{k=1}^D \C^{m_k \times m_k}.
\end{equation*}
Hence, cf.\ Step 2 (second version) in Section~\ref{ssec:sdp invariant}, we can write the polynomial $p$ in the form
\begin{equation*}
p(x_1, \ldots, x_n) = z^* \left(\sum_{k=1}^D \sum_{u,v = 1}^{m_k} x_{k,uv} \varphi^{-1}(E_{k,uv})\right) z
\end{equation*}
with $D$ positive semidefinite matrices
\begin{equation*}
X_k = \left(x_{k,uv}\right)_{1 \leq u,v \leq m_k}, \quad k = 1, \ldots, D.
\end{equation*}
We define $D$ matrices $E_1, \ldots, E_D \in (\C[x_1, \ldots, x_n]^G)^{m_k \times m_k}$ with $G$-invariant polynomial entries by
\begin{equation*}
(E_k)_{uv} = \left(\varphi^{-1}(E_{k,uv}) \right)_{\prod_i x_i^{\alpha_i}, \prod_i x_i^{\beta_i}} \prod_i x_i^{\alpha_i + \beta_i},
\end{equation*}
where we consider matrices in $\C^{\binom{n+d}{d} \times \binom{n+d}{d}}$ as matrices whose rows and columns are indexed by monomials $\prod_i x_i^{\alpha_i}$. Then, the polynomial $p$ has a representation of the form
\begin{equation*}
p(x_1, \ldots, x_n) = \sum_{k=1}^D \langle X_k, E_k \rangle.
\end{equation*}
Since the entries of $E_k$ are $G$-invariant polynomials we can use a Hironaka decomposition to represent them in terms of the invariants $\eta_i$ and $\theta_j$. We summarize our discussion in the following theorem.

\begin{mytheorem}
Let $p$ be a $G$-invariant polynomial of degree $2d$ which is a sum of squares. Then there are numbers $D$, $m_1, \ldots, m_D$ 
so that $p$ has a representation of the form
\begin{equation*}
p(x_1, \ldots, x_n) = \sum_{k=1}^D \langle X_k, E_k \rangle,
\end{equation*}
where $X_k \in \C^{m_k \times m_k}$ are positive semidefinite Hermitian matrices, and
where
\begin{equation*}
E_k \in \left(\bigoplus_{i=1}^r\eta_i\C[\theta_1, \ldots, \theta_s]\right)^{m_k \times m_k}
\end{equation*}
are matrices whose entries are polynomials (determined by a Hironaka decomposition of $\C[x_1, \ldots, x_n]^G$ and by the $*$-isomorphism $\varphi$).
\end{mytheorem}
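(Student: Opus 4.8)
The plan is to assemble the pieces already set up in the discussion above. Set $N = \binom{n+d}{d}$. Since $p$ is a sum of squares of degree $2d$, there is a positive semidefinite $X \in \C^{N\times N}$ with $p = z^{*}Xz$, and the first step is to replace $X$ by a $G$-invariant certificate. For every $g\in G$ one has $gp = z^{*}\pi(g)X\pi(g)^{*}z$, so the group average $\bar{X} = \frac{1}{|G|}\sum_{g\in G}\pi(g)X\pi(g)^{*}$ is again positive semidefinite, lies in the commutant algebra $\MA = (\C^{N\times N})^{G}$, and satisfies $z^{*}\bar{X}z = \frac{1}{|G|}\sum_{g\in G}gp = p$ because $p$ is $G$-invariant. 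Hence we may assume from the outset that $X\in\MA$; this is the analogue of the group-averaging step (Step~1) of Section~\ref{ssec:sdp invariant} in the present setting.

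Next I would apply Theorem~\ref{thm:blockdiagonal} to $\MA$, which produces the numbers $D, m_{1}, \ldots, m_{D}$ and a $*$-isomorphism $\varphi : \MA \to \bigoplus_{k=1}^{D}\C^{m_{k}\times m_{k}}$, and then invoke the parametrization of Step~2 (second version): $X = \sum_{k=1}^{D}\sum_{u,v=1}^{m_{k}}x_{k,uv}\,\varphi^{-1}(E_{k,uv})$ with each $X_{k} = (x_{k,uv})_{1\leq u,v\leq m_{k}}$ positive semidefinite Hermitian. Applying the quadratic form $A\mapsto z^{*}Az$ to this identity gives $p = z^{*}Xz = \sum_{k,u,v}x_{k,uv}\,z^{*}\varphi^{-1}(E_{k,uv})z = \sum_{k=1}^{D}\langle X_{k}, E_{k}\rangle$, where $E_{k}$ is the $m_{k}\times m_{k}$ matrix whose $(u,v)$-entry is the polynomial associated to the monomial-indexed matrix $\varphi^{-1}(E_{k,uv})$ as its quadratic form $z^{*}\varphi^{-1}(E_{k,uv})z$. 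It then remains to see where the entries of $E_{k}$ live: since each $\varphi^{-1}(E_{k,uv})$ lies in $\MA$, it satisfies $\pi(g)\varphi^{-1}(E_{k,uv})\pi(g)^{*} = \varphi^{-1}(E_{k,uv})$ for all $g$, so $z^{*}\varphi^{-1}(E_{k,uv})z$ is a $G$-invariant polynomial, and hence every entry of $E_{k}$ lies in $\C[x_{1},\ldots,x_{n}]^{G}$.

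Finally I would feed this into a Hironaka decomposition: since the invariant ring has the Cohen-Macaulay property it admits a decomposition $\C[x_{1},\ldots,x_{n}]^{G} = \bigoplus_{i=1}^{r}\eta_{i}\,\C[\theta_{1},\ldots,\theta_{s}]$, so each entry of $E_{k}$ is uniquely a polynomial in the $\eta_{i}$ and $\theta_{j}$ with the $\eta_{i}$ occurring only linearly, whence $E_{k}\in\left(\bigoplus_{i=1}^{r}\eta_{i}\,\C[\theta_{1},\ldots,\theta_{s}]\right)^{m_{k}\times m_{k}}$, as asserted. None of these steps is deep once Theorem~\ref{thm:blockdiagonal} and the existence of a Hironaka decomposition are granted; the place that genuinely deserves care is the bilinear bookkeeping --- verifying that $A\mapsto z^{*}Az$ turns the Step~2 parametrization of $X$ into precisely the pairing $\sum_{k}\langle X_{k}, E_{k}\rangle$ under the trace-product conventions (using $E_{k}^{*} = E_{k}$ and $X_{k}^{*} = X_{k}$, cf.\ \eqref{eq:parameterization}), and that $G$-invariance of the matrix $\varphi^{-1}(E_{k,uv})$ really does pass to $G$-invariance of the polynomial it represents, which relies on $\pi$ being unitary and on the monomial vector transforming by $z(g^{-1}x) = \pi(g)^{*}z(x)$.
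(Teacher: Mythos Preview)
Your proposal is correct and follows essentially the same route as the paper, which presents the argument as the discussion leading up to the theorem rather than as a separate proof. If anything you are slightly more careful: the paper asserts directly that $X$ is $G$-invariant from $gp=p$, whereas you correctly obtain a $G$-invariant certificate by averaging, which is the right move since equality of the quadratic forms $z^{*}\pi(g)X\pi(g)^{*}z = z^{*}Xz$ only holds along the Veronese curve $x\mapsto z(x)$ and does not by itself force $\pi(g)X\pi(g)^{*}=X$.
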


\section{More applications}
\label{More applications}

In the last years many results were obtained for semidefinite programs which are symmetric. This was done for a variety of problems and applications. In this final section we want to give a brief, and definitely not complete, guide to the extensive and growing literature.

\subsection{Interior point algorithms}

Kanno, Ohsaki, Murota, Katoh \cite{KannaOhsakiMurotaKatoh} consider structural properties of search directions in primal-dual interior-point methods for solving invariant semidefinite programs and apply this to truss optimization problems.  de Klerk, Pasechnik \cite{deKlerkPasechnik1} show how the dual scaling method can be implemented to exploit the particular data structure where the data matrices come from a low-dimensional matrix algebra.

\subsection{Combinatorial optimization}

Using symmetry in semidefinite programs has been used in combinatorial optimization for a variety of problems: quadratic assignment problem (de Klerk, Sotirov \cite{deKlerkSotirov}), travelling salesman problem (de Klerk, Pasechnik, Sotirov \cite{deKlerkPasechnikSotirov}), graph coloring (Gvozdenovi\'c, Laurent \cite{GvozdenovicLaurent1}, \cite{GvozdenovicLaurent2}, Gvozdenovi\'c \cite{Gvozdenovic}), Lov\'asz theta number (de Klerk, Newman, Pasechnik,
  Sotirov \cite{deKlerkNewmanPasechnikSotirov}).

\subsection{Polynomial optimization}

Jansson, Lasserre, Riener, Theobald \cite{JanssonLasserreRienerTheobald} work out how constrained polynomial optimization problems behave which are invariant under the action of the symmetric group or the cyclic group.  Among many other things, Cimpri\u{c}, Kuhlmann, Scheiderer \cite{CimpricKuhlmannScheiderer} extend the discussion of Gatermann, Parrilo \cite{GatermannParrilo} from finite groups to compact groups. Cimpri\u{c} \cite{Cimpric} transfers the method to compute minima of the spectra of differential operators. In \cite{Bosse} Bosse constructs symmetric polynomials which are non-negative but not sums of squares.

\subsection{Low distortion geometric embedding problems}

Linial, Magen, Naor \cite{LinialMagenNaor} give lower bounds for low distortion embedding of graphs into Euclidean space depending on the girth. Vallentin \cite{Vallentin1} finds explicit optimal low distortion embeddings for several families of distance regular graphs. Both papers construct feasible solutions of semidefinite programs by symmetry reduction and by using the theory of orthogonal polynomials.

\subsection{Miscellaneous}

Bai, de Klerk, Pasechnik, Sotirov \cite{BaiKlerkPasechnikSotirov} exploit symmetry in truss topology optimization and Boyd, Diaconis, Parrilo, Xiao \cite{BoydDiaconisParriloXiao} in the analysis of fast mixing Markov chains on graphs.

\subsection{Software}

Pasechnik, Kini \cite{PasechnikKini} develop a software package for the computer algebra system {\tt GAP} for computing with the regular $*$-representation for matrix $*$-algebras coming from coherent configurations.

\subsection{Surveys and lecture notes}

Several surveys and lecture notes on symmetry in semidefinite programs with different aims were written in the last years. The lecture notes of Bachoc \cite{Bachoc2} especially discuss applications in coding theory and extend those of Vallentin \cite{Vallentin3} which focuses on aspects from harmonic analysis. The survey \cite{deKlerk} of de Klerk discusses next to symmetry also the exploitation of other structural properties of semidefinite programs like low rank or sparsity.

\section*{Acknowledgements}

We thank the referee for the helpful suggestions and comments.
The fourth author was supported by Vidi grant 639.032.917 from the Netherlands Organization for Scientific Research (NWO).


\begin{thebibliography}{[15]}

\bibitem{AndrewsAskeyRoy} 
G.E.~Andrews, R.~Askey, R.~Roy,
{\em Special functions}, 
Cambridge University Press, Cambridge, 1999.

\bibitem{Askey} 
R. Askey, 
{\em Orthogonal polynomials and special functions}, 
Society for Industrial and Applied Mathematics, Philadelphia, 1975.

\bibitem{Astola}
J. Astola, 
{\em The Lee-scheme and bounds for Lee-codes}, 
Cybernet. Systems {\bf 13} (1982) 331--343.

\bibitem{Bachoc1}
C. Bachoc,
{\em Linear programming bounds for codes in Grassmannian spaces},
IEEE Trans. Inf. Th.  {\bf 52} (2006), 2111--2125.

\bibitem{Bachoc2}
C. Bachoc,
{\em Semidefinite programming, harmonic analysis and coding theory},
arXiv:0909.4767 [cs.IT]

\bibitem{BachocVallentin1}
C. Bachoc, F. Vallentin,
{\em New upper bounds for kissing numbers from
          semidefinite programming},
J. Amer. Math. Soc. {\bf 21} (2008), 909--924.

\bibitem{BachocVallentin2}
C. Bachoc, F. Vallentin,
{\em Semidefinite programming, multivariate orthogonal polynomials, and codes in spherical caps}, 
special issue in the honor of Eichii Bannai, Europ. J.  Comb. {\bf  30}
(2009), 625--637.

\bibitem{BachocVallentin3}
C. Bachoc, F. Vallentin,
{\em More semidefinite programming bounds (extended abstract)}, pages 129--132 in Proceedings ``DMHF 2007: COE Conference on the Development of Dynamic Mathematics with High Functionality'', October 2007, Fukuoka, Japan.

\bibitem{BachocVallentin4}
C. Bachoc, F. Vallentin,
{\em Optimality and uniqueness of the (4,10,1/6) spherical code},
 J. Comb. Theory Ser. A {\bf 116} (2009), 195--204.

\bibitem{BachocZemor}
C. Bachoc, G. Z\'emor, 
{\em Bounds for binary codes relative to pseudo-distances of $k$ points},
Adv. Math. Commun. {\bf 4} (2010), 547--565.

\bibitem{BachocNebeOliveiraVallentin}
C. Bachoc, G. Nebe, F.M. de Oliveira Filho, F. Vallentin,
{\em Lower bounds for measurable chromatic numbers},
Geom. Funct. Anal. {\bf 19} (2009), 645--661.

\bibitem{BaiKlerkPasechnikSotirov} 
Y. Bai, E. de Klerk, D.V. Pasechnik, R. Sotirov,
{\em Exploiting group symmetry in truss topology optimization},
Optimization and Engineering {\bf 10} (2009), 331--349.

\bibitem{BannaiIto}
E. Bannai, T. Ito,
{\em Algebraic combinatorics. I.},
The Benjamin/Cummings Publishing Co. Inc., Menlo Park, CA, 1984.

\bibitem{Barg}
A. Barg, P. Purkayastha,
{\em Bounds on ordered codes and orthogonal arrays}, 
Moscow Math. Journal {\bf 9},  2009, 211--243.

\bibitem{Barvinok} 
A. Barvinok, 
{\em A course in convexity}, 
Graduate Studies in Mathematics {\bf  54}, 
American Mathematical Society, 2002.

\bibitem{Berger} 
M.~Berger, 
{\em A Panoramic View of Riemannian Geometry}, 
Springer-Verlag, 2003.

\bibitem{Bochner} 
S. Bochner,
{\em Hilbert distances and positive definite functions},
Ann. of Math. {\bf 42} (1941), 647--656.

\bibitem{Bosse} 
H. Bosse,
 {\em Symmetric, positive polynomials, which are not sums of squares}, 
 Series: CWI. Probability, Networks and Algorithms [PNA], Nr. E0706, 2007.

\bibitem{BoydDiaconisParriloXiao}
S. Boyd, P. Diaconis, P.A. Parrilo, L. Xiao,
{\em Symmetry analysis of reversible Markov chains},
Internet Mathematics {\bf 2} (2005).

\bibitem{BrouwerCohenNeumaier} 
A.E. Brouwer, A.M. Cohen and A. Neumaier, 
\emph{Distance-regular graphs}, 
Springer-Verlag, Berlin, 1989.

\bibitem{Bump} 
D. Bump, 
{\em Lie Groups}, 
Graduate Text in Mathematics {\bf 225}, Springer-Verlag, 2004.

\bibitem{Cameron}
P. J. Cameron,
{\em Coherent configurations, association schemes and permutation groups},
pages 55--71 in {\sl Groups, combinatorics \& geometry (Durham, 2001)},  World Sci. Publishing, River Edge, NJ, 2003.

\bibitem{Cimpric}
J. Cimpri\u{c},
{\em Estimating lowest eigenvalues of symmetric polynomial differential operators by semidefinite programming},
to appear in J. Math. Anal. Appl.

\bibitem{CimpricKuhlmannScheiderer}
J. Cimpri\u{c}, S. Kuhlmann, C. Scheiderer,
{\em Sums of squares and moment problems in equivariant situations},
Trans. Amer. Math. Soc.  {\bf 361}  (2009), 735--765. 

\bibitem{Conway} 
J.B. Conway, 
{\em A course in functional analysis},
Graduate Text in Mathematics {\bf 96}, Springer-Verlag, 2007.

\bibitem{ConwaySloane} 
J.H. Conway, N.J.A. Sloane, 
{\em Sphere Packings, Lattices and Groups}, 
third edition, Springer-Verlag, New York, 1999.

\bibitem{CreignouDiet} 
J. Creignou, H. Diet,
{\em Linear programming bounds for unitary codes}, 
Adv. Math. Commun. {\bf 4} (2010), 323--344.

\bibitem{Davidson}
K.R. Davidson,
{\em C*-Algebras by Example}.

\bibitem{Delsarte1} 
P. Delsarte, 
{\em An algebraic approach to the association schemes of coding
  theory},
Philips Res. Rep. Suppl. (1973), vi+97.

\bibitem{DelsarteGoethals} 
P. Delsarte, J. M. Goethals,
{\em Alternating bilinear forms over $GF(q)$},
J. Comb. Th. A {\bf 19} (1975), 26--50.

\bibitem{Delsarte2} 
P. Delsarte, 
{\em Hahn polynomials, discrete harmonics and $t$-designs},
SIAM J. Appl. Math. {\bf 34} (1978), 157--166.

\bibitem{Delsarte3} 
P. Delsarte, 
{\em Bilinear forms over a finite field, with applications to coding theory},
J. Comb. Th. A {\bf 25} (1978), 226-241.

\bibitem{DelsarteGoethalsSeidel} 
P. Delsarte, J.M. Goethals, J.J. Seidel,
{\em Spherical codes and designs},
Geom. Dedicata {\bf 6} (1977), 363--388.

\bibitem{DobredeKlerkPasechnik}
E. de Klerk, C. Dobre, D.V. Pasechnik, 
{\em Numerical block diagonalization of matrix *-algebras with application to semidefinite programming},
preprint, 2009.

\bibitem{Duffin} 
R.J. Duffin, 
{\em Infinite Programs}, 
in: Linear inequalities and related systems, (H.W. Kuhn, A.W. Tucker eds.), 
Princeton Univ. Press, 1956, 157--170.

\bibitem{Dunkl}
C.F. Dunkl, 
{\em A Krawtchouk polynomial addition theorem and wreath product of symmetric groups}, 
Indiana Univ. Math. J. {\bf 25} (1976), 335--358.

\bibitem{EricsonZinoviev} 
T. Ericson, V. Zinoviev,
{\em Codes on Euclidean spheres},
North-Holland, 2001.

\bibitem{GatermannParrilo}
K. Gatermann, P.A. Parrilo,
{\em Symmetry groups, semidefinite programs, and sums of squares},
J. Pure Appl. Alg. \textbf{192} (2004), 95--128.

\bibitem{GijswijtThesis}
D.C. Gijswijt,
{\em Matrix Algebras and Semidefinite Programming Techniques for Codes},
PhD thesis, University of Amsterdam,  2005. 

\bibitem{Gijswijt}
D.C. Gijswijt,
{\em Block diagonalization for algebra's associated with block codes},
arXiv:0910.4515 [math.OC]

\bibitem{GijswijtMittelmannSchrijver} 
D.C. Gijswijt, H.D. Mittelmann, A. Schrijver,
\emph{Semidefinite code bounds based on quadruple distances}, 
arXiv.math:1005.4959 [math.CO]

\bibitem{GijswijtSchrijverTanaka}
D.C. Gijswijt, A. Schrijver, H. Tanaka, 
{\em New upper bounds for nonbinary codes based on the Terwilliger algebra and semidefinite programming}, 
J. Comb. Theory Ser. A {\bf 113} (2006) 1719--1731. 

\bibitem{GoemansWilliamson}
M.X. Goemans, D.P. Williamson,
{\em Approximation algorithms for MAX-3-CUT and other problems via complex semidefinite programming},
J. Comput. System Sci. {\bf 68} (2004)  442--470.

\bibitem{Gvozdenovic} 
N. Gvozdenovi\'c, 
{\em Approximating the
    stability number and the chromatic number of a graph via
    semidefinite programming}, 
PhD Thesis, University of Amsterdam,
  2008.

\bibitem{GvozdenovicLaurent1} 
N. Gvozdenovi\'c, M. Laurent, 
{\em The operator $\Psi$ for the chromatic number of a graph},
 SIAM J. Optim. \textbf{19} (2008), 572--591. 

\bibitem{GvozdenovicLaurent2} 
N. Gvozdenovi\'c, M. Laurent, 
{\em Computing semidefinite programming lower bounds for the
    (fractional) chromatic number via block-diagonalization},
SIAM J. Optim. \textbf{19} (2008), 592-615.

\bibitem{GvozdenovicLaurentVallentin} 
N. Gvozdenovi\'c, M. Laurent, F. Vallentin, 
{\em Block-diagonal semidefinite programming hierarchies for 0/1 programming},
Oper. Res. Lett. {\bf 37} (2009), 27--31.

\bibitem{HornJohnson}
R. A. Horn, C. R. Johnson,
{\em Matrix analysis},
Cambridge University Press, Cambridge, 1990. 

\bibitem{JamesConstantine} 
A.T. James, A.G. Constantine,
{\em Generalized Jacobi polynomials as spherical functions of the Grassmann
manifold}, 
Proc. London Math. Soc. {\bf 29}  (1974), 174--192.

\bibitem{JanssonLasserreRienerTheobald} 
L. Jansson, J.B. Lasserre, C. Riener, T. Theobald,
{\em Exploiting symmetries in SDP-relaxations for polynomial optimization}, 
Optimization Online, September 2006.

\bibitem{KabatianskyLevenshtein} G.A.~Kabatiansky, V.I.~Levenshtein,
{\em Bounds for packings on a sphere and in space},
Problems of Information Transmission {\bf 14} (1978), 1--17.

\bibitem{KannaOhsakiMurotaKatoh} 
Y. Kanno, M. Ohsaki, K. Murota, N. Katoh, 
{\em Group symmetry in interior-point methods for semidefinite program},
Optimization and Engineering \textbf{2} (2001) 293--320.

\bibitem{Kleitman}
D.J. Kleitman, 
{\em The crossing number of $K_{5,n}$}, 
J.  Comb. Theory Ser. B {\bf 9} (1970) 315--323.

\bibitem{deKlerk}
E. de Klerk,
{\em Exploiting special structure in semidefinite programming: A survey of theory and applications},
European Journal of Operational Research {\bf 201} (2010), 1--10.

\bibitem{deKlerkMaharry}
E. de Klerk, J. Maharry, D.V. Pasechnik, R.B. Richter, G. Salazar,
{\em Improved bounds for the crossing numbers of $K_{m,n}$ and $K_n$},
SIAM J. Disc. Math. {\bf 20} (2006) 189--202.

\bibitem{deKlerkNewmanPasechnikSotirov} 
E. de Klerk, M.W. Newman, D.V. Pasechnik, R. Sotirov, 
{\em On the Lovasz $\theta$-number of
    almost regular graphs with application to Erd\H{o}s-R\'enyi
    graphs}, 
European J. Combin. {\bf 31} (2009), 879--888.

\bibitem{deKlerkPasechnik1} 
E. de Klerk, D.V. Pasechnik, 
{\em Solving SDP's in non-commutative algebras part I: the dual-scaling algorithm},
Discussion paper from Tilburg University, Center for economic
research, 2005.

\bibitem{deKlerkPasechnik2}
E. de Klerk, D.V. Pasechnik, 
{\em A note on the stability number of an orthogonality graph}, 
European J. Combin. {\bf 28} (2007) 1971--1979.

\bibitem{deKlerkPasechnikSchrijver}
E. de Klerk, D.V. Pasechnik, A. Schrijver, 
{\em Reduction of symmetric semidefinite programs using the regular $*$-representation},
Math. Program., Ser. B {\bf 109} (2007), 613--624.

\bibitem{deKlerkPasechnikSotirov} 
E. de Klerk, D.V. Pasechnik, R. Sotirov, 
{\em On Semidefinite Programming Relaxations of the Travelling Salesman Problem}, 
Discussion paper from Tilburg University, Center for economic
research, 2007.

\bibitem{deKlerkSotirov} 
E. de Klerk, R. Sotirov, 
{\em Exploiting group symmetry in semidefinite programming relaxations of the quadratic assignment problem}, 
Math. Program. Ser. A {\bf 122} (2010), 225--246.

\bibitem{Knuth} 
D.E.~Knuth, 
{\em The sandwich theorem},
Electron. J. Combin. \textbf{1} (1994), 48 pp.

\bibitem{Lam} 
T.Y. Lam, 
{\em A first course in noncommutative rings},
Springer-Verlag, 1991.

\bibitem{Lasserre} 
J.B Lasserre, 
{\em An explicit exact SDP relaxation for nonlinear 0/1 programs.},
pages 293--302 in K. Aardal and A.M.H. Gerards, eds., \emph{Lecture Notes in Computer Science} \textbf{2081}, 2001. 

\bibitem{LaurentTriples}
M. Laurent, 
{\em Strengthened semidefinite programming bounds for codes},
Math. Program. Ser. B \textbf{109} (2007) 239--261. 

\bibitem{LaurentSurvey} 
M. Laurent, 
{\em Sums of squares, moment matrices and optimization over polynomials},
pages 157--270 in {\sl Emerging Applications of Algebraic Geometry}, Vol. 149 of IMA Volumes in Mathematics and its Applications, M. Putinar and S. Sullivant (eds.), Springer-Verlag, 2009

\bibitem{Levenshtein} 
V.I. Levenshtein,
{\em Universal bounds for codes and designs}, in
{\em Handbook of Coding Theory}, eds V. Pless and W. C. Huffmann,
Amsterdam: Elsevier, 1998, 499-648.

\bibitem{LinialMagenNaor}
N. Linial, A. Magen, A. Naor,
{\em Girth and Euclidean Distortion}, 
Geom. Funct. Anal. {\bf 12} (2002),  380--394.

\bibitem{Lovasz} 
L. Lov\'asz,
{\em On the Shannon capacity of a graph}, 
IEEE Trans. Inform. Theory {\bf 25} (1979), 1--5.

\bibitem{MacWilliamsSloane}
F.J. MacWilliams, N.J.A. Sloane, 
{\em The theory of error-correcting codes},
North-Holland Mathematical Library, Vol. 16. North-Holland Publishing Co., Amsterdam-New York-Oxford, 1977.

\bibitem{MartinStinson}
W.J. Martin, D.R. Stinson, 
{\em Association schemes for ordered orthogonal arrays and $(T,M,S)$-nets},
Canad. J. Math. {\bf 51} (1999), 326--346. 

\bibitem{McElieceRodemichRumsey}
R.J. McEliece, E.R. Rodemich and H.C. Rumsey, Jr,
{\em The Lov\'asz bound and some generalizations},
J. Combinatorics, Inform. Syst. Sci. {\bf 3} (1978), 134--152.

\bibitem{MittelmannVallentin} 
H.D. Mittelmann, F. Vallentin, 
{\em High accuracy semidefinite programming bounds for kissing numbers},
Experiment. Math. {\bf 19} (2010), 174-178.

\bibitem{MurotaKannoKojimaKojima}
K. Murota, Y. Kanno, M. Kojima, S. Kojima, 
{\em A numerical algorithm for block-diagonal decomposition of matrix *-algebras, Part I: proposed approach and application to semidefinite programming}, 
to appear in Japan Journal of Industrial and Applied Mathematics.

\bibitem{Musin} 
O.R. Musin, 
{\em Multivariate positive definite functions on spheres}, 
arXiv:math/0701083 [math.MG]

\bibitem{Nemirovski} 
A.~Nemirovski, 
{\em Advances in convex optimization: Conic programming}, 
pages 413--444 in: {\sl Proceedings of
    International Congress of Mathematicians, Madrid, August 22-30,
    2006, Volume 1} (M.~Sanz-Sol, J.~Soria, J.L.~Varona, J.~Verdera,
  Eds.), European Mathematical Society Publishing House, 2007.

\bibitem{OdlyzkoSloane}
A.M. Odlyzko, N.J.A. Sloane,
{\em New bounds on the number of unit spheres that can touch a unit
  sphere in n dimensions}, 
J. Comb. Theory Ser. A {\bf 26} (1979), 210--214.

\bibitem{Oliveira}
F.M. de Oliveira Filho,
{\em New Bounds for Geometric Packing and Coloring via Harmonic Analysis and Optimization},
PhD thesis, University of Amsterdam, 2009.

\bibitem{OliveiraVallentin}
F.M. de Oliveira Filho, F. Vallentin,
{\em Fourier analysis, linear programming, and densities of distance avoiding sets in $\R^n$},
J. Eur. Math. Soc. (JEMS) {\bf 12} (2010), 1417--1428.

\bibitem{PasechnikKini}
D.V. Pasechnik, K. Kini,
{\em A GAP package for computation with coherent configurations},
pages 69--72 in: {\sl Mathematical Software --- ICMS 2010} (K.~Fukuda, J.~van der Hoeven, M.~Joswig, N.~Takayama, Eds.)
LNCS 6327, 2010.

\bibitem{Roy1}
 A. Roy, A. J. Scott,
{\em Unitary designs and codes},
Des. Codes Cryptogr. {\bf 53} (2009), 13--31.

\bibitem{Roy2} 
A. Roy, 
{\em Bounds for codes and designs in complex subspaces},
arXiv:0806.2317 [math.CO]

\bibitem{Rudin} 
W. Rudin, 
{\em Real and Complex Analysis}, 
McGraw-Hill International Editions, 1987.

\bibitem{Schoenberg} 
I.J.~Schoenberg,
{\em Positive definite functions on spheres},
Duke Math. J. {\bf 9} (1942), 96--108.


\bibitem{Schrijver1}
A. Schrijver, 
{\em Association schemes and the Shannon capacity: Eberlein polynomials and the Erd\H os-Ko-Rado theorem}, Algebraic methods in graph theory Vol. I, II (Szeged, 1978), pp. 671--688,
\emph{Colloq. Math. Soc. J\'anos Bolyai} 25, North-Holland, Amsterdam-New York, 1981.

\bibitem{Schrijver2} 
A. Schrijver,
{\em New code upper bounds from the Terwilliger algebra and
  semidefinite programming}, 
IEEE Trans. Inform. Theory {\bf 51} (2005), 2859--2866. 

\bibitem{Soifer}
A. Soifer,
{\em The mathematical coloring book},
Springer-Verlag, 2008.

\bibitem{Stanton}
D. Stanton,
{\em Orthogonal polynomials and Chevalley groups}, in 
Special functions: group theoretical aspects and applications,
R.A. Askey, T.H. Koornwinder and W. Schempp (Eds.),
Reidel Publishing Compagny, 1984.

\bibitem{Sturm}
J.F.~Sturm,
{\em Using SeDuMi 1.02, A Matlab toolbox for optimization over symmetric cones},
Optimization Methods and Software {\bf 11} (1999), 625--653.

\bibitem{Sturmfels}
B.~Sturmfels,
{\em Algorithms in Invariant Theory},
Springer-Verlag, 1993. 

\bibitem{Szego} 
G. Szeg\"o, 
{\em Orthogonal polynomials}, 
American Mathematical Society, 1939.

\bibitem{Takesaki}
M. Takesaki, 
{\em Theory of Operator Algebras I.},
Encyclopaedia of Mathematical Sciences, 124. Operator Algebras and Non-commutative Geometry, 5. Springer-Verlag, Berlin, 2002.

\bibitem{TarnanenAaltonenGoethals} 
H. Tarnanen, M.  Aaltonen, J.-M.  Goethals,
{\em On the nonbinary Johnson scheme},  
European J. Combin.  {\bf 6}  (1985), 279--285. 

\bibitem{Tarnanen}
H. Tarnanen, 
{\em Upper bounds on permutation codes via linear programming}, 
European  J. Combin. {\bf 20} (1999) 101--114.

\bibitem{Vallentin1} 
F. Vallentin, 
{\em Optimal embeddings of distance transitive graphs into Euclidean spaces}, 
J. Comb. Theory Ser. B {\bf 98} (2008), 95--104.

\bibitem{Vallentin2} 
F. Vallentin, 
{\em Symmetry in semidefinite programs}, 
Linear Algebra and Appl. 430 (2009), 360--369. 

\bibitem{Vallentin3}
F. Vallentin,
{\em Lecture notes: Semidefinite programs and harmonic analysis},
arXiv:0809.2017 [math.OC]

\bibitem{VilenkinKlimyk} 
N.Ja.~Vilenkin, A.U.~Klimyk,
{\em Representation of Lie Groups and Special Functions, Volume 2},
Kluwer Academic Publishers, 1993.

\bibitem{Wang} 
H.G. Wang, 
{\em Two-point homogeneous spaces},
Ann. of Math. {\bf 55} (1952), 177--191.

\bibitem{Woodall}
D.R. Woodall, 
{\em Cyclic-order graphs and Zarankiewicz's crossing-number conjecture}, 
J. Graph Theory {\bf 17} (1993) 657--671.

\bibitem{Zarankiewicz}
K. Zarankiewicz, 
{\em On a problem of P. Tur\'an concerning graphs},
Fundamenta Mathematicae {\bf 41} (1954) 137--145.

\end{thebibliography}
\end{document}